\setlist[enumerate]{nosep}
\crefname{equation}{equation}{equations}
\crefname{figure}{Figure}{Figures}
\numberwithin{equation}{section}
\newcommand{\dd}{\mathrm{d}}
\newcommand{\defeq}{\mathrel{\mathop:}=}
\newcommand{\eqdef}{\mathrel{\mathop=}:}
\DeclareMathOperator{\Residu}{Res}
\DeclareMathOperator{\coker}{coker}
\DeclareMathOperator{\Range}{range}
\DeclareMathOperator{\dist}{dist}
\DeclareMathOperator{\Span}{span}
\DeclareMathOperator{\card}{card}
\newcommand{\fB}{\hat{\mathsf{B}}}
\newcommand{\fM}{\hat{\mathsf{M}}}
\newcommand{\fU}{\hat{\mathsf{U}}}
\newcommand{\fV}{\hat{\mathsf{V}}}
\newcommand{\vK}{ {\boldsymbol{\mathsf{k}}} }
\newcommand{\vL}{ {\boldsymbol{\ell}} }
\newcommand{\vM}{ {\boldsymbol{\mathsf{m}}} }
\newcommand{\cEll}{\mathcal{E}^c_{|\vK|}}
\newcommand{\fOpR}{\mathsf{R}}
\DeclareMathOperator{\bis}{\flat}
\newcommand{\antipodreflOp}{\mathrm{R}^{\pi}}
\newcommand{\swapOp}{\mathrm{S}}
\newcommand{\LinOp}{\mathrm{L}}
\newcommand{\LinOpFk}{\mathsf{L}}
\newcommand{\Real}{\mathrm{Re}}
\newcommand{\Imag}{\mathrm{Im}}
\newcommand{\Normpsiphi}{N^k}
\newcommand{\polyP}{P}
\newcommand{\lmbk}{\uplambda_k}
\newcommand{\sigk}{\upsigma_k}
\newcommand{\tauk}{\uptau_k}
\newcommand{\x}{x}
\newcommand{\thth}{\theta\theta}
\newcommand{\bdth}{\mathrm{\theta}}
\newcommand{\bdthth}{\mathrm{\theta \theta}}
\newcommand{\F}{\mathrm{F}}
\newcommand{\B}{\mathrm{B}}
\renewcommand{\H}{H}
\newcommand{\bPhi}{\mathrm{\Phi}}
\newcommand{\dR}{\mathbb{R}}
\newcommand{\dZ}{\mathbb{Z}}
\newcommand{\dN}{\mathbb{N}}
\newcommand{\dC}{\mathbb{C}}
\newcommand{\Torus}{\mathbb{T}}
\newcommand{\fW}{\mathsf{W}}
\newcommand{\fX}{\mathsf{X}}
\newcommand{\fY}{\mathsf{Y}}
\newcommand{\fC}{\mathsf{C}}
\newcommand{\Aek}{\mathsf{e}_{k}}
\newcommand{\Ac}{\mathsf{c}}
\newcommand{\Ay}{\mathsf{y}}
\newcommand{\Ax}{\mathsf{x}}
\newcommand{\Afk}{\mathsf{f}_{k}}
\newcommand{\Azoutk}{\mathsf{z}^{\mathrm{out}}_k}
\newcommand{\UnitCircleResThm}{\partial \mathbb{D}}
\newcommand{\Aint}{\mathrm{I}}
\newcommand{\Azink}{\mathsf{z}^{\mathrm{in}}_k}
\newtheorem{prop}{Proposition}
\newtheorem{thm}{Theorem}
\newtheorem{definition}{Definition}
\newtheorem{lem}{Lemma}
\newtheorem{cor}{Corollary}
\numberwithin{prop}{section}
\numberwithin{lem}{section}
\numberwithin{thm}{section}
\numberwithin{definition}{section}
\numberwithin{cor}{section}
\theoremstyle{remark}
\title{Existence of spot and lane stationary solutions for an ant active matter PDE model}
\author{Matthias Rakotomalala\thanks{Technical University of Munich,
Chair of Analysis and Modelling,  Boltzmannstr. 3,
85748 Garching b. M\"unchen, Germany.}, Oscar de Wit\thanks{Université Claude Bernard Lyon 1, Institut Camille Jordan, 69622 Villeurbanne, France.}}
\begin{document}

    \maketitle
    
    \begin{abstract}
        This paper studies the existence of multiple non-trivial stationary solutions of a partial differential equation (PDE) model introduced in~\cite{bertucci2024curvature}, motivated by collective ant behavior. Previous work suggested the presence of two types of non-trivial stationary solutions for this PDE system: spot and lane solutions. In this paper, we establish the existence of these families of solutions along a bifurcation sequence as the interaction strength grows, with progressively increasing numbers of clusters and parallel lanes, respectively. Finally, we show that, for small values of the anticipation parameter, the first bifurcating spot solutions are locally dynamically stable, while the lane solutions are unstable.
    \end{abstract}

    
    \tableofcontents
    
    \section{Introduction}
In this paper, we address the existence of probability density solutions $f$ of the following nonlinear PDE system
\begin{subequations}
    \label{eq:fullFstat}
    \begin{align}
       \label{eq:Masterf}
         0=& \  \sigma_x \Delta_\x f + \sigma_\theta \partial_{\bdthth} f
         - \lambda\, v_{\bdth} \cdot \nabla_\x f
         - \chi\, \partial_{\bdth}\left( \B_\tau[f]f \right), \\[2mm] \label{eq:MasterBdef}
         \B_\tau[f] =&\
         v_{\bdth}^\perp \cdot \nabla_\x c
         + \tau\, v^\perp_{\bdth} \cdot (\nabla_\x^2 c)\, v_{\bdth},\\
         \label{eq:Mastercdef}0 =& \ 
          \gamma c - \sigma_c \Delta_\x c - \int f \dd\theta.
    \end{align}
    \end{subequations}
The equations are considered with periodic boundary conditions: the first two are posed on $\Torus^2_1\times \Torus_{2\pi}$ and the third on $\Torus^2_1$. The domain variables are here denoted by $(x,\theta)=(x_1,x_2,\theta) \in\mathbb{T}^2_1\times\mathbb{T}_{2\pi}$.

This PDE corresponds to the stationary solutions of the time-dependent problem
\begin{subequations}\label{eq:ftPDE}
    \begin{align}
        \partial_t f&=\; 
     \sigma_x \Delta_\x f + \sigma_\theta \partial_{\bdthth} f
     - \lambda\, v_{\bdth} \cdot \nabla_\x f
     - \chi\, \partial_{\bdth}\!\left( \B_\tau[f]f \right),\\
     f|_{t=0}&=f^0(x,\theta).
    \end{align}
\end{subequations}
This time-dependent problem has been studied analytically and numerically in \cite{bertucci2024curvature,rakodeWit2025,bruna2024lane,bruna2025convergence}. The motivation for studying the PDE \eqref{eq:ftPDE} is multifold.

\textbf{Introduction of the model.} The PDE \eqref{eq:ftPDE} can be derived as the evolution equation of the law of a stochastic interacting particle system, as the number of particles goes to infinity, $N\to\infty$, also called the McKean--Vlasov mean-field limit. The interacting particle system reflects $N$ diffusive ant-particles moving at a fixed speed $\lambda$ in the direction of their orientation $v_\theta=(\cos\theta,\sin\theta)^\mathsf{T}$. The ants interact, as in \eqref{eq:MasterBdef}, such that their orientations follow the local \emph{anticipated} chemical gradient, $\nabla_\x c+\tau(\nabla^2_\x c) v_\bdth=\nabla_\x c(x+\tau v_\bdth)+O(\tau^2)$, where $\tau$ is the anticipation parameter. This parameter can also be thought of as the length of the ant-antenna. This is the only interaction that the ants undergo: there are no external stimuli such as pre-laid pheromone trails, attractive nests or food sources. For this reason, the model reflects the spontaneous self-organization of a collective of ants.

Results on the dynamics of the model \eqref{eq:ftPDE} so far are as follows. It was observed in \cite{bertucci2024curvature,bruna2024lane,bruna2025convergence} that, besides the trivial homogeneous solution $f=\frac{1}{2\pi}$, there are two types of non-trivial stationary states for \eqref{eq:ftPDE} in the linearly unstable regime. As discussed in those papers, the two families of stationary states can be interpreted, respectively, as aggregation \emph{spots} and bidirectional \emph{lanes}. The emergence of either type of stationary state critically depends on the size of $\tau$. That is, the numerical simulations indicated that there is a critical damping value $\tau^\ast$ (fixing all other parameters) in the linear unstable regime, such that below, $\tau<\tau^\ast$, one has the emergence of spots and above, $\tau>\tau^\ast$, one has the emergence of lanes. In \cite{rakodeWit2025}, the existence of the global attractor was proven, and its dimension in the linear unstable regime can be bounded from below by four, giving a first rigorous proof for the existence of non-trivial dynamics. The main result of this paper concerns the existence of a sequence of two types of stationary solutions -- spots and lanes --, making the earlier observations based on numerics rigorous and extending the result of the preceding paper \cite{rakodeWit2025}. As such, the paper provides a rigorous mathematical foundation for self-organized lane formation in collectives of ants.

\textbf{State of the art.} The PDE model \eqref{eq:ftPDE} can be considered as an active matter model or non-equilibrium model. Specifically, to maintain the fixed speed $\lambda$ (activity) the system is thermodynamically open. 
Moreover, the entropy $\int f(t)\log f(t)\dd x\dd\theta$ for \eqref{eq:ftPDE} does not always decrease monotonically, independent of the initial data, unlike, for example, gradient flow PDEs. Within the active matter physics literature, the model \eqref{eq:ftPDE} is an extension of the so-called Active Attractive Alignment (AAA) model, as coined in the physics review paper \cite{liebchen2019interactions}. The extension of the AAA model as in \eqref{eq:ftPDE} can be formally derived with a local first-order Taylor series expansion of the anticipated chemical gradient.


For several active matter PDE models, different from this ant PDE model \eqref{eq:ftPDE}, in \cite{dietert2024nonlinear,albritton2023stabilizing,alasio2024regularity,merino2025stability}, nonlinear stability around the homogeneous stationary state on periodic domains was obtained, using a combination of regularity estimates and linear methods.

Some results have been obtained regarding the analysis of non-homogeneous states for non-equilibrium PDE models.
For example, using dynamical systems methods, in \cite{luccon2020emergence,giacomin2012transitions}, the existence of periodic and non-homogeneous stationary solutions for multiple non-equilibrium PDE models was proven, for sufficiently small individual dynamics. In \cite{cormier2021hopf}, using the Lyapunov--Schmidt method, the existence of a Hopf bifurcation for a non-equilibrium neuronal integrate-and-fire PDE is shown. In \cite{haragus2010local} and references therein, the existence of bifurcations for stationary and periodic solutions is shown for multiple non-equilibrium hydrodynamic models. In \cite{bogachev2019convergence}, convergence towards stationary solutions is shown using probabilistic methods, including a case with non-uniqueness of the stationary solutions.

\textbf{Structure of the paper.} We use the Lyapunov--Schmidt method to reduce problem \eqref{eq:fullFstat} to a finite dimensional implicit equation. To do this, we study the linearized operator and derive explicit integral formulas in the $\sigma_\theta\to 0$ limit. In Section~\ref{sec:Preliminaries}, we provide preliminary properties related to the symmetry of the equation. Next, in Section~\ref{sec:MainRes}, we state the main result of the paper, which is about the existence of the two families of non-homogeneous stationary solutions, spots and lanes. As a corollary, we also derive the stability properties of these stationary states for $\tau$ smaller than a constant above which the model loses super-criticality of the bifurcation. The remaining sections are devoted to the proof of the results of Section~\ref{sec:MainRes}. In Section~\ref{sec:fourdecomppos} we derive bases for the kernel and cokernel of the linearized operator and give a formula for the inverse of the linearized operator. In Section~\ref{sec:BirfucationCurves}, we derive the Lyapunov--Schmidt reduced equation and compute explicitly the leading order integral coefficients as $\sigma_\theta\to 0$. The explicit computations are given in full detail in Appendix~\ref{sec:AppendixIntegrals}. We then derive the asymptotic behavior of the Lyapunov--Schmidt integral coefficients as $\sigk=2\pi k\sigma_x/\lambda\to 0$. Using these results we can derive the main result. Finally, in Section~\ref{sec:StabilityAnalysis}, using the Principle of Reduced Stability (see~\cite{kielhofer1983principle}), we derive the stability properties in the super-critical case.


    \section{Preliminaries}
\label{sec:Preliminaries}
In this first section, we list definitions and introduce the notations and the functional framework used throughout the paper. Subsequently, we present the symmetries of the equation that inform the choice of the solution space for \eqref{eq:fullFstat}.

\subsection{Notations and Functional equation}
We use the functional Lebesgue and Sobolev spaces, $L^2_0(\Torus^2_1\times \Torus_{2\pi}, \dR),L^2_0(\Torus_{2\pi}, \dC), L^2(\Torus_{2\pi},\\ \dC)$, corresponding to $L^2$-integrable functions, and $H^2_0(\Torus^2_1\times \Torus_{2\pi},\dR), H^2_0(\Torus_{2\pi}, \dC), H^2(\Torus_{2\pi},\\ \dC)$, corresponding to the Sobolev spaces of $L^2$-integrable functions with $L^2$-integrable second derivatives. The $0$-subscript, $\cdot_0$, indicates that we consider the linear subspace of functions with zero average, $\int f=0$. These are real- and complex-valued function spaces and are equipped with their canonical Hilbertian structure. We use the scalar product on the $L^2$-spaces, and write, without ambiguity, $\langle \cdot, \cdot\rangle$ for the $L^2$-scalar product. For the sake of notational compactness, we denote, in order, the above spaces as $L^2_{0}, L^2_\theta(\dC), L^{2}_{\theta;0}(\dC)$ and $ H^2_{0}, H^2_{\theta}(\dC), H^{2}_{\theta;0}(\dC)$.
Similarly, for $0 \leq s$, we denote $H^{s}_{0}$ the Sobolev space $H^{s}_0(\Torus^2_1\times \Torus_{2\pi}, \dR)$, defined in Fourier space. Finally, we use the notation $\cdot^\star$ for the adjoint of an operator, and $\cdot^*$ for complex conjugation.


We define the equation functional $\F: H^2_{0} \times \dR  \to L^2_{0}$, as
\begin{equation*}
    \F(f,\chi) = \sigma_x \Delta_\x f + \sigma_\theta \partial_{\bdthth}f - \lambda v_{\bdth} \cdot \nabla_\x f - \chi \partial_{\bdth} \left(\B_\tau[f] \left(\tfrac{1}{2\pi}+ f\right)\right).
\end{equation*}
In our analysis, the interaction strength parameter $\chi$, serves as the bifurcation parameter. As the bifurcation analysis gives us solution curves $s \mapsto ( f_s,\chi_s) \in H^2_{0} \times \dR $ around $(0, \chi^*)$ of the equation,
\begin{equation}
    \label{eq:MainSimpleform}
    \F(f, \chi) = 0.
\end{equation}

From the Morrey embedding in dimension three we have the compact embedding $H^2_{0} \subset \subset L^\infty(\Torus^2_1\times\Torus_{2\pi},\dR)$ and as $\B_\tau [\tfrac{1}{2\pi} + f_s] = \B_\tau [f_s]$, we obtain that for small $s$, $\tfrac{1}{2\pi} + f_s$ is a positive probability density solution to equation~\eqref{eq:fullFstat}.

\subsection{Symmetries of the equation}

We now introduce the antipodal-reflection operator and the swap operator, which preserve the structure of the equation. We leverage these symmetry properties to study the bifurcation equation in the following sections.

\begin{definition}
    We define the \textbf{antipodal-reflection operator}, $\antipodreflOp: L^2_0\to L^2_0 $, as
    \begin{equation*}
        (\antipodreflOp f)(x,\theta) = f(-x, \theta + \pi).
    \end{equation*}    
\end{definition}
We have the following commutation relation between the equation functional and the antipodal-reflection operator.

\begin{prop}
    \label{antipodalReflectionCommutations}
    Given $f\in H^2_0$, we have the following commutation relation,
    \begin{equation*}
        \antipodreflOp  \F(f,\chi) = \F(\antipodreflOp f, \chi).
    \end{equation*}
    Furthermore, $\antipodreflOp$ commutes with any of the terms involved in $\F$, and distributes on products,
    \begin{equation*}        
        \antipodreflOp (fg) = \antipodreflOp f \antipodreflOp g,
    \end{equation*}
    for $f,g \in L^2_0$.
\end{prop}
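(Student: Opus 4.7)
The plan is to verify the claims in reverse order of the statement, since the main commutation relation will follow from the term-by-term commutations and the product rule. The multiplicative property $\antipodreflOp(fg)=\antipodreflOp f\,\antipodreflOp g$ is immediate from the pointwise definition: both sides evaluated at $(x,\theta)$ equal $f(-x,\theta+\pi)g(-x,\theta+\pi)$.

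Next I would treat each differential operator appearing in $\F$ in turn, recording the parity under the two substitutions $x\mapsto -x$ and $\theta\mapsto \theta+\pi$. The Laplacian $\Delta_x$ and the angular derivative $\partial_{\theta\theta}$ are even in the respective variables, so they commute trivially with $\antipodreflOp$. For the transport term $v_\theta\cdot\nabla_x$, I would use the two sign flips $v_{\theta+\pi}=-v_\theta$ and $(\nabla_x f)(-x,\theta+\pi)=-\nabla_x(\antipodreflOp f)(x,\theta)$; the two minus signs cancel, giving $\antipodreflOp(v_\theta\cdot\nabla_x f)=v_\theta\cdot\nabla_x(\antipodreflOp f)$. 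Similarly, $\antipodreflOp\partial_\theta = \partial_\theta\antipodreflOp$ since $\partial_\theta$ is unchanged by the shift $\theta\mapsto\theta+\pi$.

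The main step (and the only one requiring a small side argument) is to show that $\antipodreflOp\,\B_\tau[f]=\B_\tau[\antipodreflOp f]$. I would first note that the angular marginal transforms as
\begin{equation*}
\int(\antipodreflOp f)(x,\theta)\,\dd\theta = \int f(-x,\theta+\pi)\,\dd\theta = \int f(-x,\theta)\,\dd\theta,
\end{equation*}
so the source in \eqref{eq:Mastercdef} is merely reflected $x\mapsto -x$. Since the elliptic problem $\gamma c-\sigma_c\Delta_x c = \rho$ has a unique solution in the zero-mean class (the symbol $\gamma+\sigma_c|k|^2$ is invertible on nonzero Fourier modes), the associated chemical field satisfies $c_{\antipodreflOp f}(x)=c_f(-x)$. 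Thus $\nabla_x c$ picks up a minus sign under $x\mapsto -x$, which combines with the flip $v_{\theta+\pi}^\perp=-v_\theta^\perp$ to preserve the term $v_\theta^\perp\cdot\nabla_x c$; and $\nabla_x^2 c$ is even, which combines with two flips $(v_{\theta+\pi}^\perp,v_{\theta+\pi})\mapsto(-v_\theta^\perp,-v_\theta)$ to preserve the curvature term as well. This gives the claim for $\B_\tau$.

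Finally, collecting these commutations and invoking the product rule in the nonlinear term $\partial_\theta(\B_\tau[f](\tfrac{1}{2\pi}+f))$—using that $\antipodreflOp$ fixes the constant $\tfrac{1}{2\pi}$—yields $\antipodreflOp\F(f,\chi)=\F(\antipodreflOp f,\chi)$. I do not anticipate a genuine obstacle here: the argument is essentially a careful bookkeeping of signs, with the only nontrivial point being the uniqueness of $c$ that allows us to identify $c_{\antipodreflOp f}$ with the reflection of $c_f$.
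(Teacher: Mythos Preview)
Your proposal is correct and follows essentially the same approach as the paper's proof: a term-by-term verification that each operator in $\F$ commutes with $\antipodreflOp$ by tracking sign flips under $x\mapsto -x$ and $\theta\mapsto\theta+\pi$, with the only nontrivial step being the identification $c_{\antipodreflOp f}(x)=c_f(-x)$ via uniqueness of the elliptic problem. One minor remark: since $\gamma>0$, the symbol $\gamma+4\pi^2\sigma_c|\vK|^2$ is invertible on \emph{all} Fourier modes, not just nonzero ones, so there is no need to restrict to a zero-mean class for $c$.
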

The proof is in the Appendix~\ref{sec:AppendixSymetries}.
\begin{definition}
    \label{def:antipodaleven}
    We say that a function $f$ is \textbf{antipodal even} (resp. \textbf{antipodal odd}) if, 
    \begin{equation*}
        \antipodreflOp  f = f,
    \end{equation*}
    (resp. $\antipodreflOp f = - f$), or equivalently,
    \begin{equation}
        f(x,\theta) = f(-x,\theta + \pi),
    \end{equation}
    (resp. $f(x,\theta) = -f(-x,\theta + \pi)$).
    We use the abbreviation \textbf{a.e.} for antipodal even and \textbf{a.o.} for antipodal odd.
\end{definition}
We now introduce the swap operator, corresponding to exchanging the $x_1$- and $x_2$-axes with the adequate rotation in the angle variable. 
\begin{definition}
    We define the \textbf{swap operator}, $\swapOp : L^2_0 \to L^2_0$, as
    \begin{equation*}
        (\swapOp f)(x_1,x_2,\theta) = f\left(-x_2,-x_1,-\theta-\tfrac{\pi}{2}\right).
    \end{equation*}
    We call a function $g \in L^2_0$ swap symmetric, if $\swapOp g = g$.
\end{definition}
The swap operator satisfies the following properties.

\begin{prop}
    \label{prop:SwapOperator}
    The swap operator $\swapOp$, is an \textbf{involution} $\swapOp^{-1} = \swapOp$, it is \textbf{self-adjoint} $\swapOp^\star = \swapOp$, and it \textbf{commutes} with the equation functional $\F$, $\F(\swapOp f,\chi) = \swapOp \F(f,\chi)$.
\end{prop}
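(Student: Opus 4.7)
The plan is to prove each of the three claims in Proposition \ref{prop:SwapOperator} separately, with the commutation with $\F$ being handled term by term.

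For the involution property, I would simply compute $\swapOp^2$ from the definition: applying $\swapOp$ twice replaces $(x_1,x_2,\theta)$ first with $(-x_2,-x_1,-\theta-\tfrac{\pi}{2})$ and then with $(-(-x_1),-(-x_2),-(-\theta-\tfrac{\pi}{2})-\tfrac{\pi}{2})=(x_1,x_2,\theta)$, so $\swapOp^2=\Id$. For self-adjointness, I would write out $\langle \swapOp f, g\rangle$ as an integral over $\Torus_1^2\times\Torus_{2\pi}$ and perform the change of variables $(u,v,\phi)=(-x_2,-x_1,-\theta-\tfrac{\pi}{2})$. The Jacobian is $\pm 1$, and both tori are invariant under this bijection by periodicity, so the integral becomes $\langle f, \swapOp g\rangle$.

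For the commutation $\F(\swapOp f,\chi)=\swapOp\F(f,\chi)$, I would argue term by term. The Laplacian $\sigma_x\Delta_x$ is invariant under swapping and sign-flipping the spatial coordinates, so it commutes with $\swapOp$ (two derivatives produce two compensating minus signs). Similarly $\sigma_\theta\partial_{\theta\theta}$ commutes with $\swapOp$ since $\partial_\theta(\swapOp f)=-\swapOp(\partial_\theta f)$ and squaring kills the sign. For the transport term, a direct computation shows $\partial_{x_1}(\swapOp f)=-\swapOp(\partial_{2}f)$ and $\partial_{x_2}(\swapOp f)=-\swapOp(\partial_1 f)$, while the rotation $v_{-\theta-\pi/2}=(-\sin\theta,-\cos\theta)$ exactly compensates this swap and sign flip, giving $v_\theta\cdot\nabla_x(\swapOp f)=\swapOp(v_\theta\cdot\nabla_x f)$.

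The most delicate term is the interaction $\chi\partial_\theta(\B_\tau[f](\tfrac{1}{2\pi}+f))$. First I would show that $c_{\swapOp f}(x_1,x_2)=c_f(-x_2,-x_1)$, which follows from uniqueness of solutions to the Helmholtz-like equation \eqref{eq:Mastercdef}: the source term satisfies $\int\swapOp f\,\dd\theta=(\int f\,\dd\theta)(-x_2,-x_1)$, and $\Delta_x$ commutes with the spatial swap. Using the derivative identities above together with $v_{-\theta-\pi/2}^\perp=(\cos\theta,-\sin\theta)$, a short bookkeeping computation gives $v_\theta^\perp\cdot\nabla_x c_{\swapOp f}=-\swapOp(v_\theta^\perp\cdot\nabla_x c_f)$, and an analogous bilinear computation with the Hessian yields $v_\theta^\perp\cdot(\nabla_x^2 c_{\swapOp f})v_\theta=-\swapOp(v_\theta^\perp\cdot(\nabla_x^2 c_f)v_\theta)$. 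Hence $\B_\tau[\swapOp f]=-\swapOp(\B_\tau[f])$. Since $\swapOp$ distributes on products, the factor $(\tfrac{1}{2\pi}+\swapOp f)=\swapOp(\tfrac{1}{2\pi}+f)$ is unaffected in structure, and the remaining sign is absorbed by $\partial_\theta\swapOp=-\swapOp\partial_\theta$, producing the desired identity.

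The main obstacle is the careful sign bookkeeping in the interaction term, where one must track the interplay between the spatial swap of $c$, the rotation acting on $v_\theta^\perp$, and the sign produced by $\partial_\theta$; the key observation is that two separate minus signs (one from $\B_\tau[\swapOp f]=-\swapOp\B_\tau[f]$ and one from $\partial_\theta\swapOp=-\swapOp\partial_\theta$) cancel.
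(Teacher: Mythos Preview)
Your proposal is correct and follows essentially the same route as the paper's own proof: involution and self-adjointness via direct change of variables, then term-by-term commutation with $\F$, with the key intermediate identity $\B_\tau[\swapOp f]=-\swapOp\B_\tau[f]$ whose minus sign is cancelled by $\partial_\theta\swapOp=-\swapOp\partial_\theta$. The only minor difference is presentational: the paper expands the trigonometric identities for $v_\theta$, $v_\theta^\perp$ and the Hessian term explicitly, whereas you phrase the same computation in terms of derivative/rotation identities; both arrive at the same sign bookkeeping.
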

The proof is in the Appendix~\ref{sec:AppendixSymetries}.

\section{Statement of the Main Result}

\label{sec:MainRes}

This section is devoted to the statement of the main result. We also present a corollary on the local stability of the spot-solutions and discuss these results. We first introduce the following notion.

\begin{definition}
    We say that a positive integer $k\in\mathbb{N}^\ast$ is \textbf{non-Pythagorean}, if $k^2 \neq l^2+m^2$, for all $l, m \in \dN^\ast$.
\end{definition}


The following statement is the main result of this paper and synthesizes the results from Theorem~\ref{thm:LyapunovSchmidt} and Theorem~\ref{thm:ExistenceSolCurves}.

\begin{thm}\label{thm:main}
    There is a constant $\mathrm{c}>0$, such that the following holds.
    
    For any $\gamma > 0 , \sigma_c > 0, \sigma_x >0, \lambda >0$ and any non-Pythagorean wave number $k\in \dN^*$, satisfying
    \begin{equation*}
        2\pi k\sigma_x < \mathrm{c}\lambda,
    \end{equation*}
    there exists $\sigma^k_\theta>0$, such that for all $0 < \sigma_\theta < \sigma_\theta^k$, there exist $\chi^k > 0, \uptau^k_{\Lambda} > \uptau^k_{\Xi}>0$, such that, provided that $\tau\geq 0$ and $\tau\neq \uptau^k_{\Lambda}$ and $\tau\neq \uptau^k_{\Xi}$,  equation \eqref{eq:MainSimpleform} admits two distinct local analytic solution branches around $(f,\chi)=(0,\chi^k)$.

    Specifically, there exist a neighborhood $U$ of $0$ in $\dR$ and curves for $s\in U$
    \begin{equation*}
        s \mapsto \left(f^\Lambda_s, \chi^\Lambda_s\right), \; \; 
        s \mapsto \left(f^\Xi_s, \chi^\Xi_s\right),
    \end{equation*}
    such that $\F(f^\Lambda_s,\chi^\Lambda_s) = \F(f^\Xi_s,\chi^\Xi_s)=0$. 
    
    Furthermore, the solutions $f^\Lambda_s$ are constant in the $\x_2$-variable, and the solutions $f^\Xi_s$ are swap symmetric.
\end{thm}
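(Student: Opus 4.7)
The plan is to apply a Lyapunov--Schmidt reduction around the trivial solution $(f,\chi)=(0,\chi^k)$, exploiting the symmetries established in Section~\ref{sec:Preliminaries} to isolate the two distinct bifurcating families.

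First, I would study the linearization $\LinOp := D_f\F(0,\chi^k): H^2_0 \to L^2_0$. Since the system~\eqref{eq:fullFstat} is translation invariant in $x$, after eliminating $c$ via the linear elliptic equation~\eqref{eq:Mastercdef}, $\LinOp$ diagonalizes over the spatial Fourier modes $\vK\in\dZ^2$. On each such block it reduces to an explicit linear ODE-type operator acting only in $\theta$, whose spectral analysis is tractable, especially in the $\sigma_\theta\to 0$ asymptotic regime used throughout the paper. I would define $\chi^k$ as the marginal-stability value for modes with $|\vK|=k$. The non-Pythagorean hypothesis then guarantees that no off-axis integer pair $(l,m)$ with $l^2+m^2=k^2$ can contribute, so the kernel of $\LinOp$ is supported only on the axis-aligned modes $\vK\in\{(\pm k,0),(0,\pm k)\}$, producing a finite-dimensional kernel and cokernel.

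Next, I would use the symmetries from Propositions~\ref{antipodalReflectionCommutations} and~\ref{prop:SwapOperator}: both $\antipodreflOp$ and $\swapOp$ commute with $\F$, so their common eigenspaces are $\F$-invariant, and the problem decouples across them. Restricting to antipodal-even functions, the kernel becomes real and two-dimensional, spanned by an $x_2$-independent lane mode (on $\vK=(\pm k,0)$) and a transverse mode (on $\vK=(0,\pm k)$). Further restricting either to the subspace of functions independent of $x_2$, or to the swap-symmetric subspace, yields in each case an $\F$-invariant subspace on which the kernel is one-dimensional and the corresponding critical eigenvalue is simple. These two invariant subspaces will produce, respectively, the lane branch $f^\Lambda_s$ and the spot branch $f^\Xi_s$.

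Within each invariant subspace I would then run the standard Crandall--Rabinowitz bifurcation argument (equivalently, an explicit Lyapunov--Schmidt reduction). This requires the explicit inverse of $\LinOp$ on the range complement, supplied by Section~\ref{sec:fourdecomppos}, together with two non-degeneracy conditions: a transversality condition that $\partial_\chi\LinOp$ moves the critical eigenvalue off the kernel, and the nonvanishing of the leading nonlinear coefficient of the reduced scalar equation. The main obstacle is this last verification: the coefficient is a nontrivial $\tau$-dependent integral, computed in Appendix~\ref{sec:AppendixIntegrals} in the $\sigma_\theta\to 0$ limit, and its two zeros in $\tau$ are precisely $\uptau^k_{\Lambda}$ and $\uptau^k_{\Xi}$, which is why these values must be excluded in the hypothesis. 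The smallness assumptions on $\sigma_\theta$ and on $2\pi k\sigma_x/\lambda$ are needed to control the asymptotics of these integrals and to fix the sign of the transversality coefficient. Finally, analyticity of $\F$ in $(f,\chi)$ promotes the branches to analytic curves, and the positivity of $\tfrac{1}{2\pi}+f^\bullet_s$ for small $s$ follows from the $L^\infty$-embedding noted after~\eqref{eq:MainSimpleform}.
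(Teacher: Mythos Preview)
Your proposal is correct and follows essentially the same strategy as the paper: Fourier block-diagonalization of $\LinOp$, use of the non-Pythagorean hypothesis to pin the kernel to the four axis modes, restriction to the antipodal-even subspace to halve the kernel, and exploitation of the $x_2$-invariant and swap-symmetric subspaces to isolate the two one-dimensional problems whose cubic coefficients vanish exactly at $\uptau^k_\Lambda$ and $\uptau^k_\Xi$.

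The only organizational difference is that the paper carries out a single two-dimensional Lyapunov--Schmidt reduction on the antipodal-even space and then shows, via the swap identity $\bPhi^k(r_1,r_2,\chi)=\bigl(\begin{smallmatrix}0&1\\1&0\end{smallmatrix}\bigr)\bPhi^k(r_2,r_1,\chi)$, that the diagonal $\{r_1=r_2\}$ and the axis $\{r_2=0\}$ are field lines of the reduced vector field; it then applies Newton's polygon on each line. Your variant, restricting to the swap-symmetric and $x_2$-independent invariant subspaces \emph{before} reducing, is equivalent and arguably cleaner, since each restriction directly produces a simple kernel and one can invoke Crandall--Rabinowitz verbatim. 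The paper's route has the advantage that the single two-dimensional reduction also yields the off-diagonal coefficient $c^k$, which is needed later for the reduced-stability analysis of Corollary~\ref{cor:NonLinStabAnalysis} (the eigenvalues of $D_r\bPhi^k$ along each branch involve both $b^k$ and $c^k$), whereas your two separate one-dimensional reductions would not see the cross-term directly.
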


Specifically, the solutions satisfy the following expansions, 
\begin{align*}
    &(f^\Lambda_s,\chi^\Lambda_s)=(s\phi^{\vK_1} + o(s), \chi^k + c^\Lambda_\tau s^2 + o(s^2)),\\
    &(f^\Xi_s, \chi^\Xi_s)= (s(\phi^{\vK_1}+ \phi^{\vK_2}) + o(s)\chi^k + c^\Xi_\tau s^2 + o(s^2)),
\end{align*}
for some $c^\Lambda_\tau$ and $c^\Xi_\tau$ and where $\phi^{\vK_1},\phi^{\vK_2}$ are purely in, respectively, mode $\vK_1 = (k,0)$ and $\vK_2 = (0,k)$ in partial positional Fourier space. They are defined in Section \ref{sec:fourdecomppos}.  From the profiles of $\phi^{\vK_1}$ and $\phi^{\vK_2}$, the $x_2$-invariance of $f^\Lambda$ and the swap symmetry of $f^\Xi$, we refer to the $\Lambda$-solutions as \textbf{lane}-solutions and to the $\Xi$-solutions by \textbf{spot}-solutions.

\begin{cor}
    \label{cor:NonLinStabAnalysis}
    Under the hypotheses of Theorem \ref{thm:main}, when $0\leq \tau < \uptau^k_\Xi$ (resp. $0\leq \tau < \uptau^k_\Lambda$) the bifurcation branch $(s \mapsto (f^\Xi_s, \chi^\Xi_s))$ (resp. $(s \mapsto (f^\Lambda_s, \chi^\Lambda_s))$) is \textbf{super-critical}, and if $\uptau^k_\Xi < \tau$ (resp. $\uptau^k_\Lambda < \tau$) the bifurcation branch is \textbf{sub-critical}.
    
    Furthermore, 
    \begin{equation*}
       \text{ if } k = 1 \text{ and }0\leq \tau < \uptau^k_\Xi,
    \end{equation*}
    for fixed $s$ the solutions $(\tfrac{1}{2\pi} + f^\Xi_s, \chi^\Xi_s)$ are uniformly locally stable for the time-dependent problem \eqref{eq:ftPDE}, in the space $ H^{1+\varepsilon}_0\cap \{\antipodreflOp  f = f\}$, for any $0<\varepsilon<1 $, and the $\Lambda$-branch is unstable.
\end{cor}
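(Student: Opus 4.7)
I would split the corollary into two parts. The super/sub-critical dichotomy is read off from the signs of the second-order coefficients $c^\Lambda_\tau, c^\Xi_\tau$ in the bifurcation expansions, which are the explicit output of the Lyapunov--Schmidt reduction of Section~\ref{sec:BirfucationCurves}. The conditional stability/instability statement for $k=1$ is obtained by applying Kielhofer's Principle of Reduced Stability (PRS)~\cite{kielhofer1983principle} in the antipodal-even subspace $Y^\varepsilon := H^{1+\varepsilon}_0 \cap \{\antipodreflOp f = f\}$, which is invariant under the semiflow of \eqref{eq:ftPDE} by Proposition~\ref{antipodalReflectionCommutations}.

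\textbf{Super/sub-criticality.} The coefficients $c^\Lambda_\tau$ and $c^\Xi_\tau$ are low-degree polynomials in $\tau$ whose coefficients are the explicit integrals computed in the $\sigma_\theta \to 0$ asymptotics. One verifies that each is strictly positive at $\tau = 0$ and possesses a unique positive root, which we \emph{define} to be $\uptau^k_\Lambda$ and $\uptau^k_\Xi$ respectively; direct comparison of the two formulas gives $\uptau^k_\Xi < \uptau^k_\Lambda$. Below the respective threshold the branch is super-critical (since $\chi^{\bullet}_s - \chi^k$ has the sign of $s^2$), and above it is sub-critical.

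\textbf{Reduced stability.} Fix $k=1$ and $0 \leq \tau < \uptau^1_\Xi$. I linearize $\F(\cdot, \chi^\Xi_s)$ at $f^\Xi_s$ to obtain $\LinOp_s$ on $Y^\varepsilon$, an analytic $\mathcal{O}(s)$-perturbation of $\LinOp_0 := D_f \F(0,\chi^1)$. By the parabolic structure of \eqref{eq:fullFstat}, $\LinOp_0$ is sectorial. In $Y^\varepsilon$ its kernel is two-dimensional, spanned by the antipodal-even, swap-symmetric and swap-antisymmetric combinations of the modes $\phi^{\vK_1}, \phi^{\vK_2}$ from Section~\ref{sec:fourdecomppos}, and the rest of its spectrum is uniformly bounded inside $\{\Real z \leq -\delta\}$ for some $\delta > 0$. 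The crucial use of $k=1$ here is that $1$ is the smallest non-Pythagorean index, so no other Fourier block crosses the imaginary axis at $\chi = \chi^1$; the gap persists under the $\mathcal{O}(s)$ perturbation by analytic spectral theory. PRS then reduces local stability of $f^\Xi_s$ in $Y^\varepsilon$ to the spectrum of the $2\times 2$ Jacobian of the Lyapunov--Schmidt reduced map along the $\Xi$-branch. Using the normal form of this reduced map dictated by the $(\antipodreflOp, \swapOp)$-equivariance, its two eigenvalues expand as $-2 c^\Xi_\tau s^2 + o(s^2)$ in the direction along the branch and as a transverse quadratic form whose sign is controlled by the same polynomial inequalities defining $\uptau^1_\Xi$; under $\tau < \uptau^1_\Xi$ both are strictly negative, yielding uniform local stability. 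Along the $\Lambda$-branch, the transverse direction corresponds to the unexcited mode $\phi^{\vK_2}$, and the same normal-form computation yields a strictly positive eigenvalue, giving instability.

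\textbf{Main obstacle.} The core technical step is establishing the uniform spectral gap for $\LinOp_0$ on $Y^\varepsilon$: one must check, using the non-Pythagorean property and the minimality $k=1$, that the instability cascade starts \emph{exactly} at $\chi^1$ and that no further near-kernel modes from other Fourier blocks contaminate the reduction. The secondary difficulty is verifying the sectoriality and interpolation-smoothness hypotheses of Kielhofer's PRS at the fractional regularity $H^{1+\varepsilon}_0$, so that the two-dimensional spectral conclusion transfers to genuine nonlinear semigroup stability for \eqref{eq:ftPDE}.
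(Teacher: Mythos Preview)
Your approach is essentially the paper's: the super/sub-criticality is read off from the signs of $b^k$ and $b^k+c^k$ (Proposition~\ref{prop:PositivorNegative} and Theorem~\ref{thm:ExistenceSolCurves}), and the stability for $k=1$ is obtained via the Principle of Reduced Stability applied to the $2\times 2$ Jacobian of $\bPhi^1$ along each branch (Theorem~\ref{thm:StabilitySupercritical}), followed by Henry's nonlinear stability theorem~\cite[Theorem~5.1.1]{henry2006geometric} with $\alpha=(1+\varepsilon)/2$ to pass from linear to nonlinear stability in $H^{1+\varepsilon}_0\cap\{\antipodreflOp f=f\}$.

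One point to sharpen: the transverse eigenvalue along the $\Xi$-branch is $-2(b^k-c^k)s^2+o(s^2)$, and its sign is \emph{not} controlled by the inequality defining $\uptau^k_\Xi=\uptau^k_{b+c}$ but by a third, strictly larger threshold $\uptau^k_{b-c}$ (see Lemma~\ref{lem:AsymptoticsrootsIkj} and Proposition~\ref{prop:PositivorNegative}). The paper needs the explicit ordering $\uptau^k_{b+c}<\uptau^k_b<\uptau^k_{b-c}$ of all three roots, obtained from a finer asymptotic expansion in $\sigk$, to conclude that $\tau<\uptau^k_\Xi$ forces $b^k-c^k>0$ and hence both $\Xi$-eigenvalues negative; the same quantity $b^k-c^k>0$ is exactly the positive transverse eigenvalue that makes the $\Lambda$-branch unstable. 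Without this separate ordering your argument for the stability of $\Xi$ is incomplete.
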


The proof of Corollary~\ref{cor:NonLinStabAnalysis} follows directly from the linear stability analysis of Theorem~\ref{thm:StabilitySupercritical}, applying the nonlinear stability result of Henry~\cite[Theorem 5.1.1]{henry2006geometric} (with $\alpha = (1+\varepsilon)/2$,  $X = L^2_0 \cap \{\antipodreflOp f = f\}$) and using that the nonlinearity is controlled as $\|\partial_\theta (\B[f]f)\|_{L^2}\lesssim \|f\|^2_{H^{1+\varepsilon}_0}$ by the Sobolev embedding theorem. The nonlinear instability follows similarly; further details were already given in~\cite{rakodeWit2025} for the instability case.

\begin{figure}[!ht]
    \centering
    \includegraphics[width=.8\linewidth]{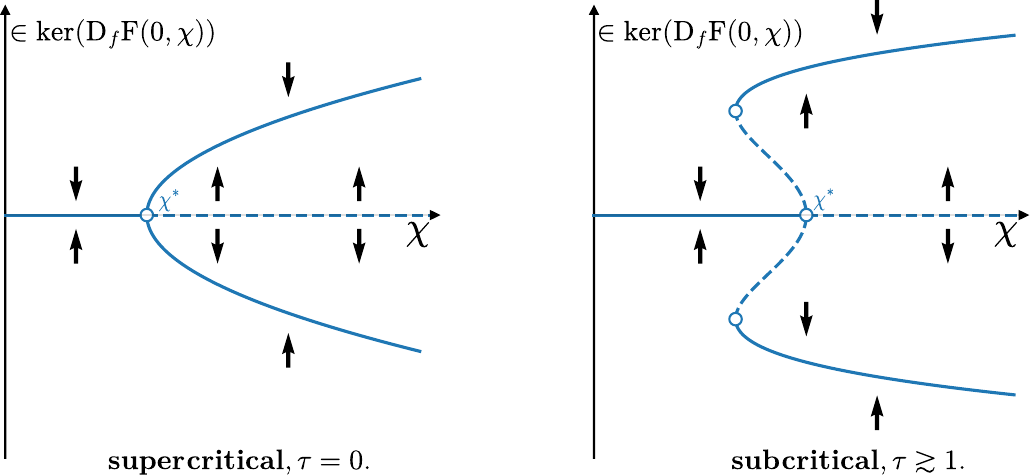}
    \caption{Sub-critical and super-critical bifurcation diagrams.}
    \label{fig:subsupercriticiality}
\end{figure}
 
There are several points of discussion regarding the main results that we clarify.

First, the idea of the proof is to extend the strategy of ~\cite{rakodeWit2025}, that is, we use the decoupling in positional Fourier space and resolvent estimates in terms of $\sigma_x$ and $\sigma_\theta$. In this paper, we obtain a bifurcation sequence for wave numbers $k$ that are non-Pythagorean; this restriction on the wave numbers fixes the dimension of the Lyapunov--Schmidt reduced equations to two. For Pythagorean wave numbers, the dimension is higher, and would require a finer symmetry study when constructing the bifurcation curves. Then, we explicitly compute the singular leading-order of the Lyapunov--Schmidt coefficients in terms of $\sigma_\theta$ as $\sigma_\theta\to 0^+$. Finally, to make the bifurcation analysis tractable in terms of $\tau$, we expand these coefficients in terms of $\sigk =2\pi k\sigma_x/\lambda$ as $\sigk\to 0^+$, hence requiring the inequality $\sigk<\mathrm{c}$, for some $\mathrm{c}$ independent of the parameters and $k = |\vK|$.

Second, due to the translation invariance of the PDE problem~\eqref{eq:fullFstat}, the two types of stationary solutions that are constructed in Theorem~\ref{thm:main}, both yield distinct infinite families of stationary solutions, as translates of the profiles. Moreover, the lane solutions yield a double family by the swap-symmetry. This translation invariance requires imposing the symmetry $\antipodreflOp f = f $, in the stability analysis of Corollary~\ref{cor:NonLinStabAnalysis}, in order to quotient the space $H^{1+\varepsilon}_0$ by translation symmetries.

Third, the sub-criticality of the bifurcation for large $\tau$, was not yet observed in the earlier preceding papers. Specifically, we obtain that when $ 0 \leq \tau< \uptau_\Xi$, both the bifurcation curves of spots and lanes are super-critical, when $\uptau_\Xi < \tau< \uptau_\Lambda$, spot solution curves are sub-critical and lane solution curves are super-critical, and, finally, when $\uptau_\Lambda < \tau$, both are sub-critical. In the sub-critical regime, solutions exist before the bifurcation point, and the dynamical stability relies on a global analysis of the bifurcation curves, as explained in Figure~\ref{fig:subsupercriticiality}. As lane solutions are locally unstable in the super-critical regime, as shown in Corollary~\ref{cor:NonLinStabAnalysis}, the exchange of stability between spot and lane solutions in terms of $\tau$ is not addressed here and is left for future work. Furthermore, in Appendix~\ref{sec:AppendixNumerics} we show numerical evidence of the sub-critical bifurcation for $\tau=0.5$ and the super-criticality for $\tau=0$ for $x_2$-invariant solutions.

    \section{Linearized Operator}
\label{sec:fourdecomppos}


In this section, we prepare the analysis of the Lyapunov--Schmidt reduction by establishing preliminary properties of the functional equation. First, we need the following result on the equation operator $\F$. This is a classical result from elliptic theory, see for example~\cite[Chapter III]{kielhofer2006bifurcation}.
\begin{prop}
    The equation functional $\F : H^2_0 \times \dR \to L^2_0$ is a non-linear Fredholm operator, and admits the linearized operator $D_f\F \in C(H^2_0 \times \dR, L(H^2_0,L^2_0))$, given by,
    \begin{equation*}
        D_f\F(f, \chi )[\phi]  = \sigma_x \Delta_{\x}\phi + \sigma_\theta \partial_{\bdthth}\phi- \lambda v_{\bdth}\cdot\nabla_{\x} \phi - \chi\partial_{\bdth}( \B_\tau[\phi] (\tfrac{1}{2\pi} + f) + \B_\tau[f]\phi).
    \end{equation*}
\end{prop}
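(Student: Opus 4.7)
The plan is to verify in turn three assertions: (i) that $\F$ is well defined and of class $C^\infty$ as a map $H^2_0\times\dR\to L^2_0$, (ii) that its partial Fréchet derivative in $f$ is given by the stated formula and depends continuously on $(f,\chi)$ in operator norm, and (iii) that $D_f\F(f,\chi)$ is Fredholm at every point.

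First, for well-definedness, each linear term $\sigma_x\Delta_\x f$, $\sigma_\theta\partial_{\bdthth}f$ and $-\lambda v_{\bdth}\cdot\nabla_\x f$ maps $H^2_0$ continuously into $L^2$, and the zero-average property is preserved because each of these is either a spatial divergence or a periodic $\partial_{\bdth}$ derivative. For the coupling term, the Poisson-type equation $\gamma c-\sigma_c\Delta_\x c=\int f\,\dd\theta$ is uniquely solvable on $\Torus^2_1$ by Fourier inversion on the zero-average subspace, with the elliptic gain $\|c\|_{H^4(\Torus^2_1)}\lesssim\|f\|_{H^2_0}$, so $\B_\tau[f]\in H^2(\Torus^2_1\times\Torus_{2\pi})$. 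Since $H^2$ is a Banach algebra in dimension three, the product $\B_\tau[f](\tfrac{1}{2\pi}+f)$ lies in $H^2$, and applying $\partial_{\bdth}$ yields an $L^2_0$ element.

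Second, since $f\mapsto c$ and hence $f\mapsto\B_\tau[f]$ are linear and bounded, the only genuine nonlinearity in $\F$ is the bilinear product $\B_\tau[f]f$. Expanding $\B_\tau[f+\phi](\tfrac{1}{2\pi}+f+\phi)-\B_\tau[f](\tfrac{1}{2\pi}+f)$ and isolating linear-in-$\phi$ terms gives precisely $\B_\tau[\phi](\tfrac{1}{2\pi}+f)+\B_\tau[f]\phi$, with quadratic remainder $\B_\tau[\phi]\phi$ whose $H^2$-algebra bound furnishes Fréchet differentiability; continuity of $(f,\chi)\mapsto D_f\F(f,\chi)$ in operator norm follows immediately from the same bilinear estimate, and iterating yields $C^\infty$ dependence.

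Third, for the Fredholm property at $(f,\chi)$, I would decompose $D_f\F(f,\chi)=\mathrm{L}_0+\mathrm{K}(f,\chi)$, where $\mathrm{L}_0=\sigma_x\Delta_\x+\sigma_\theta\partial_{\bdthth}$ is the leading elliptic part and $\mathrm{K}$ gathers the drift together with the whole $\chi$-contribution. On the zero-average subspace the Fourier symbol of $\mathrm{L}_0$, namely $-(\sigma_x|k|^2+\sigma_\theta n^2)$, is bounded away from zero, so $\mathrm{L}_0:H^2_0\to L^2_0$ is an isomorphism. The drift $-\lambda v_{\bdth}\cdot\nabla_\x$ sends $H^2$ into $H^1$, and thanks to the two-derivative elliptic gain in $c$ the nonlocal piece also sends $H^2$ into $H^1$; both are therefore compact into $L^2$ by Rellich--Kondrachov. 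The only real subtlety is this nonlocal dependence of $\B_\tau[f]$ on $x$ through $c$, but once the gain $H^2_0\ni f\mapsto c\in H^4(\Torus^2_1)$ is in hand, $D_f\F(f,\chi)$ appears as a compact perturbation of an isomorphism and is therefore Fredholm of index zero, as in the classical references on bifurcation for elliptic problems.
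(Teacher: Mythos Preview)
Your argument is correct and follows the standard route: verify that the nonlinearity is a smooth bilinear map on $H^2$ via the algebra property, and then exhibit $D_f\F(f,\chi)$ as the isomorphism $\sigma_x\Delta_x+\sigma_\theta\partial_{\theta\theta}$ plus a compact perturbation coming from the first-order drift and the smoothing nonlocal term. The paper itself does not give a proof of this proposition at all---it simply cites \cite[Chapter~III]{kielhofer2006bifurcation} as a classical elliptic fact---so your write-up is a faithful expansion of exactly the argument that reference would supply.

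One small wording slip: you say the equation for $c$ is ``solvable by Fourier inversion on the zero-average subspace,'' but since $\gamma>0$ the symbol $\gamma+4\pi^2\sigma_c|\vK|^2$ never vanishes and the equation is solvable for arbitrary right-hand side, not just zero-average ones; this does not affect your argument.
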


We denote by $\LinOp_{\chi}$ the linearized operator around the homogeneous solution, $\LinOp_{\chi}  =D_f\F(\frac{1}{2\pi},\chi)[\cdot]= \sigma_x \Delta_{\x} 
+ \sigma_\theta \partial_{\bdth\bdth} 
- \lambda\, v_{\bdth} \cdot \nabla_{\x} 
- \frac{\chi}{2\pi} \, \partial_{\bdth} \B_\tau[\cdot ]$. The adjoint of $\LinOp_{\chi}$ is explicitly given by $(\LinOp_{\chi})^\star = \sigma_x \Delta_{\x} 
+ \sigma_\theta \partial_{\bdth\bdth}  
+ \lambda\, v_{\bdth} \cdot \nabla_{\x}  
- \frac{\chi}{2\pi} \, (\partial_{\bdth} \B_\tau)^\star[\cdot]
$, with
\[
(\partial_{\bdth} \B_\tau)^\star[\psi] =
\int_0^{2\pi} \Bigl[
v_{\bdth} \cdot \nabla_{\x} \Tilde{c}
+ \tau \Bigl( v^\perp_{\bdth} \cdot (\nabla^2_{\x} \Tilde{c}) v^\perp_{\bdth} 
- v_{\bdth} \cdot (\nabla^2_{\x} \Tilde{c}) v_{\bdth} \Bigr)
\Bigr] \, \mathrm{d}\theta, \quad
\Tilde{c} = (\gamma - \sigma_c \Delta_x)^{-1} \psi.
\]
Here, we extend the operator as
\((\gamma - \sigma_c \Delta_x)^{-1} : L^2(\Torus^2_1)\to H^2(\Torus^2_1)\)
to
\((\gamma - \sigma_c \Delta_x)^{-1} : L^2(\Torus_{2\pi}, L^2(\Torus^2_1)) \to L^2(\Torus_{2\pi}, H^2(\Torus^2_1))\).
Finally, using the Hilbertian structure, we have
\[
\coker(\LinOp_\chi) = \ker((\LinOp_\chi)^{\star}) = (\Range(\LinOp_\chi))^\perp.
\]


\subsection{Partial Fourier decomposition}

In this paper, we use the following Fourier transform conventions. Given a function $g \in L^2_0$, the partial Fourier transform of $g$ in the positional variable, is defined for $\vK \in \dZ^2$ as,
\begin{equation*}
    \hat{g}_\vK(\theta) \defeq \mathcal{F}_x\{g\}_\vK (\theta)  =  \int g(x, \theta) e^{- i 2\pi 
 \, \vK\cdot x}\dd x.
\end{equation*}


This implies the Parseval identity for $f,g \in L^2_0$, 
\begin{equation*}
    \left\langle f, g\right\rangle \defeq \int f g \dd x \dd \theta = \sum_{\vK\in \dZ^2} \int \hat{f}^*_\vK \hat{g}_\vK \dd \theta,
\end{equation*}
where $\hat{f}_\vK^\ast, \hat{g}_\vK \in L^2_{\theta}(\dC)$ and $\hat{f}_0 , \hat{g}_0 \in L^2_{\theta;0}(\dC)$.

We introduce the following functions depending on $\theta$, which correspond to the Fourier multipliers in the $x$-variable of the terms in the operator $\mathrm{L}_\chi$.

The Fourier multiplier of the inverse elliptic operator is $\cEll = \gamma + 4\pi^2 |\vK|^2\sigma_c$. That is, if $-\sigma_c\Delta_\x c+\gamma c=\rho$, then $\hat{c}_\vK =\hat{\rho}_\vK /\mathcal{E}^c_{|\vK|}$. The Fourier multipliers of the operators $\B_\tau$ and $ - \sigma_x \Delta_\x + \lambda v \cdot \nabla_\x$ are given by,
\begin{align*}
    \fB_{\vK}(\theta) &= \frac{1}{\cEll}\left(2\pi i v^\perp(\theta)\cdot \vK  -  4\pi^2\tau v^\perp(\theta) \cdot (\vK \otimes \vK) v(\theta) \right),\\
    \fM_{\vK}(\theta) &= 4\pi^2 \sigma_x |\vK|^2 +  i 2\pi \lambda v(\theta)\cdot \vK.
\end{align*}
We naturally observe the following properties,
\begin{equation}
    \label{prop:ConjugateBM}
    (\fB_{\vK})^* = \fB_{-\vK}, \; \; (\fM_{\vK})^* = \fM_{-\vK},
\end{equation}
as we consider real-valued operators. 

We denote the restriction of the operator $\LinOp_\chi$ for each positional Fourier mode $\vK \in \dZ^2$ by $\LinOpFk^\vK_\chi$ acting on functions depending on $\theta$,
\begin{equation*}
    \LinOpFk_\chi^\vK \defeq \mathcal{F}_x\{\LinOp_\chi\}_\vK =  -\fM_\vK + \sigma_\theta \partial_{\thth} - \chi \partial_\theta \fB_{\vK} \int \cdot\; \dd \theta.
\end{equation*}
We also define, for $k \in \dN^*$, the \textbf{$k$-rescaled constants},
\begin{equation}
    \label{eq:krescaledConstants}
    \lmbk=\ 2\pi\lambda |k|, \hspace{3em} \sigk= \ 2\pi\frac{\sigma_x}{\lambda}|k|, \hspace{3em} \tauk= \ 2\pi \tau |k|.
\end{equation}
Then, for the specific wave numbers $\vK_1=(k,0),\vK_2=(0,k)$, we have explicitly the Table~\ref{tb:FourierMultipliers}.
\begin{table}[!ht]
\caption{Fourier Multipliers}
\centering
\label{tb:FourierMultipliers}
\renewcommand{\arraystretch}{1.6}
\begin{tabular}{|c||c|c|}
\hline
 & $\vK_1=(k,0)$ & $\vK_2=(0,k)$ \\
\hline
$\hat{\mathsf{B}}_{\vK}$ 
&
$\displaystyle \frac{2\pi k}{\cEll}\!\left(-i\sin\theta+\tauk\tfrac{\sin 2\theta}{2}\right)$
&
$\displaystyle \frac{2\pi k}{\cEll}\!\left(i\cos\theta-\tauk\tfrac{\sin 2\theta}{2}\right)$
\\
\hline
$\hat{\mathsf{M}}_{\vK}$
&
$\displaystyle \lmbk(\sigk+i\cos\theta)$
&
$\displaystyle \lmbk(\sigk+i\sin\theta)$
\\
\hline
$\partial_\theta\hat{\mathsf{B}}_{\vK}$
&
$\displaystyle \frac{2\pi k}{\cEll}\!\left(-i\cos\theta+\tauk\cos 2\theta\right)$
&
$\displaystyle \frac{2\pi k}{\cEll}\!\left(-i\sin\theta-\tauk\cos 2\theta\right)$
\\
\hline
$\partial_\theta\hat{\mathsf{M}}_{\vK}$
&
$\displaystyle -i\lmbk \sin\theta$
&
$\displaystyle i\lmbk\cos\theta$
\\
\hline
\end{tabular}
\end{table}


For computing closed-form integrals associated with the Lyapunov--Schmidt reduction, we use the following constants,
\begin{equation}
    \label{eq:defekzink}
    \Azink \defeq -(2\sigk^2+1) + \sqrt{(2\sigk^2+1)^2-1},\; \; \Aek \defeq \left(\sqrt{(2\sigk^2+1)^2-1}\right)^{-1} 
\end{equation}

We recall the following resolvent estimate, which was proven in Lemma 5.2 in~\cite{rakodeWit2025}.
\begin{prop}
    \label{prop:ResolventProperties}
    For $\vL \in \dZ^2\setminus\{0\}$ and $\sigma_\theta \geq 0$, we define the operator $\fOpR^\vL_{\sigma_\theta} : L^2_{\theta}(\dC) \to H^2_{\theta}(\dC)$, as the resolvent,
    \begin{equation*}
        \fOpR^\vL_{\sigma_\theta} = \left(-\hat{\mathsf{M}}_{\vL} + \sigma_\theta \partial_{\thth}\right)^{-1}.
    \end{equation*}
    This resolvent operator satisfies the following complex conjugation property. For all $g \in L^2_{\theta}(\dC)$,
    \begin{equation}
        \label{prp:ConjugateResolvent}
         (\fOpR^\vL_{\sigma_\theta} g)^*= \fOpR^{-\vL}_{\sigma_\theta} g^*.
    \end{equation}
    Furthermore, for regular functions $g\in H^2_\theta(\dC)$, we have the following uniform convergence. That is, for any $\sigma_1 \geq \sigma_2 \geq 0$,
    \begin{equation}
        \label{est:ResolventContinuity}
        \sup_{\vL \in \dZ^2 \setminus\{0\}} \|(\fOpR^\vL_{\sigma_1} - \fOpR^\vL_{\sigma_2})g\|_{L^2}\leq C_{\sigma_2,\|g\|_{H^2_\theta}} |\sigma_1 - \sigma_2|.
    \end{equation}
\end{prop}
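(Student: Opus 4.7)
The plan is to establish the two claims separately, since they rely on different ideas. For the conjugation property \eqref{prp:ConjugateResolvent}, I would start from the definition: if $u = \fOpR^\vL_{\sigma_\theta} g$, then $(-\fM_\vL + \sigma_\theta \partial_{\theta\theta})u = g$. Taking complex conjugates of both sides, using that $\sigma_\theta$ and $\partial_{\theta\theta}$ are real, and invoking the identity $\fM_\vL^* = \fM_{-\vL}$ from \eqref{prop:ConjugateBM}, gives $(-\fM_{-\vL} + \sigma_\theta \partial_{\theta\theta})u^* = g^*$. Invertibility of the operator on the left identifies $u^* = \fOpR^{-\vL}_{\sigma_\theta} g^*$, which is exactly the claim.

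For the continuity estimate \eqref{est:ResolventContinuity}, the central tool I would use is the resolvent identity. Setting $A_i \defeq -\fM_\vL + \sigma_i \partial_{\theta\theta}$, so that $A_2 - A_1 = (\sigma_2 - \sigma_1)\partial_{\theta\theta}$, the standard algebraic identity yields
\begin{equation*}
    \fOpR^\vL_{\sigma_1} - \fOpR^\vL_{\sigma_2}
    = \fOpR^\vL_{\sigma_1}(A_2 - A_1)\fOpR^\vL_{\sigma_2}
    = (\sigma_2 - \sigma_1)\,\fOpR^\vL_{\sigma_1} \partial_{\theta\theta} \fOpR^\vL_{\sigma_2}.
\end{equation*}
This reduces the task to bounding $\|\fOpR^\vL_{\sigma_1} \partial_{\theta\theta} \fOpR^\vL_{\sigma_2} g\|_{L^2}$ uniformly in $\vL \in \dZ^2\setminus\{0\}$ and $\sigma_1 \geq \sigma_2$, with a constant allowed to depend on $\sigma_2$ and $\|g\|_{H^2_\theta(\dC)}$.

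The first ingredient I would establish is a uniform coercivity estimate. Testing the resolvent equation against $u^*$, integrating in $\theta$, and taking the real part exploits $\Real \fM_\vL = 4\pi^2 \sigma_x |\vL|^2$ to give
\begin{equation*}
    \|\fOpR^\vL_{\sigma_\theta} g\|_{L^2} \leq \frac{\|g\|_{L^2}}{4\pi^2 \sigma_x |\vL|^2},
\end{equation*}
uniformly in $\vL$ and $\sigma_\theta \geq 0$. The second and more delicate ingredient is a uniform-in-$\vL$ bound on $\|\partial_{\theta\theta}\fOpR^\vL_{\sigma_2} g\|_{L^2}$, which I would handle in two regimes. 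For $\sigma_2 > 0$, rearranging the resolvent equation gives $\sigma_2 \partial_{\theta\theta} u_2 = g + \fM_\vL u_2$ with $u_2 = \fOpR^\vL_{\sigma_2} g$; combining $\|\fM_\vL\|_{L^\infty_\theta} \leq C(1+|\vL|^2)$ with the coercivity bound yields $\|\partial_{\theta\theta}\fOpR^\vL_{\sigma_2} g\|_{L^2} \leq C\|g\|_{L^2}/\sigma_2$, uniform in $\vL$. For $\sigma_2 = 0$, the resolvent is the pointwise multiplier $\fOpR^\vL_0 g = -g/\fM_\vL$, and the $H^2_\theta(\dC)$ regularity of $g$ enters: expanding $\partial_{\theta\theta}(g/\fM_\vL)$ by the Leibniz rule and using that $|\partial_\theta^j (1/\fM_\vL)| \leq C/|\vL|^2$ for $j\in\{0,1,2\}$ gives $\|\partial_{\theta\theta}\fOpR^\vL_0 g\|_{L^2} \leq C\|g\|_{H^2_\theta(\dC)}/|\vL|^2$, again uniform in $\vL$.

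The main technical point is handling the singular limit $\sigma_\theta \to 0$: the $H^2_\theta(\dC)$ regularity of $g$ is needed precisely at $\sigma_2 = 0$, where two $\theta$-derivatives must land on $g$ after inverting a multiplication operator, while for $\sigma_2 > 0$ the energy estimate alone suffices at the cost of a constant degenerating like $1/\sigma_2$. The freedom in the proposition to let $C_{\sigma_2,\|g\|_{H^2_\theta}}$ depend on $\sigma_2$ is exactly what reconciles these two regimes.
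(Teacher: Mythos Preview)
Your proof is correct and follows essentially the same route as the paper: the conjugation property by conjugating the defining equation, and the continuity estimate by an energy (coercivity) bound applied to the difference of the two resolvent equations, using the $H^2_\theta$ regularity of $g$ at $\sigma_2=0$. Your resolvent-identity formulation $\fOpR^\vL_{\sigma_1}-\fOpR^\vL_{\sigma_2}=(\sigma_2-\sigma_1)\fOpR^\vL_{\sigma_1}\partial_{\theta\theta}\fOpR^\vL_{\sigma_2}$ is just an algebraic repackaging of subtracting the equations for $u_{\sigma_1}$ and $u_{\sigma_2}$, exactly as the paper does in the closely related computation inside the proof of Proposition~\ref{prop:SpectralGapatFirstBif}.
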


The proof directly follows from an energy estimate using the regularity of the function $g$. The conjugation property follows by conjugating the equation.

We now define the following smooth functions depending on $\theta$,
\begin{equation*}
    \fU^\ell_{\sigma_\theta}(\theta)  = \fOpR^\ell_{\sigma_\theta} \partial_\theta \fB_\ell, \ \ \fV^\ell_{\sigma_\theta}(\theta)  = -\fOpR^{-\ell}_{\sigma_\theta} 1.
\end{equation*}
These functions satisfy, by \eqref{prp:ConjugateResolvent} and \eqref{prop:ConjugateBM}, the following identities,
\begin{equation*}    
    (\fU^\ell_{\sigma_\theta})^* = \fU^{-\ell}_{\sigma_\theta}, \ \ 
    (\fV^\ell_{\sigma_\theta})^* =\fV^{-\ell}_{\sigma_\theta}.
\end{equation*}

\begin{prop}
    \label{prop:RotationUKukomega}
    Let $\vK \in \dZ^2\setminus\{0\}$, $ \vK= |\vK| (\cos(\omega),\sin(\omega))$, and denote $\vK_1 = (|\vK|, 0)$. 
    Then, the following identities hold,
    \begin{align}
        \fB_\vK &= \fB_{\vK_1} (\cdot -\omega),\\
        \fM_\vK &= \fM_{\vK_1} (\cdot -\omega),\\ 
        \fU^\vK_{\sigma_\theta} &= \fU^{\vK_1}_{\sigma_\theta} (\cdot - \omega).
    \end{align}
\end{prop}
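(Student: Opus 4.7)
The proof strategy is a direct verification using the rotational structure of the dot product, followed by a conjugation argument for the resolvent.

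First I would verify the identities for $\fB_\vK$ and $\fM_\vK$ by elementary trigonometry. Writing $\vK = |\vK|(\cos\omega,\sin\omega)$ and $\vK_1 = (|\vK|,0)$, the key observation is
\[
v(\theta)\cdot \vK = |\vK|\cos(\theta-\omega) = v(\theta-\omega)\cdot\vK_1, \qquad v^\perp(\theta)\cdot\vK = v^\perp(\theta-\omega)\cdot\vK_1,
\]
which also yields $v^\perp(\theta)\cdot(\vK\otimes\vK)v(\theta) = v^\perp(\theta-\omega)\cdot(\vK_1\otimes\vK_1)v(\theta-\omega)$. Since $\cEll$ depends only on $|\vK|$, the definitions of $\fB_\vK$ and $\fM_\vK$ immediately give $\fB_\vK(\theta)=\fB_{\vK_1}(\theta-\omega)$ and $\fM_\vK(\theta)=\fM_{\vK_1}(\theta-\omega)$.

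Next, for the identity on $\fU^\vK_{\sigma_\theta}$ I would use the translation operator $T_\omega g(\theta)\defeq g(\theta-\omega)$ and show that the resolvent $\fOpR^\vK_{\sigma_\theta}$ is conjugate to $\fOpR^{\vK_1}_{\sigma_\theta}$ under $T_\omega$. Since multiplication by $\fM_\vK$ satisfies $M_{\fM_\vK}T_\omega = T_\omega M_{\fM_{\vK_1}}$ (by the identity just established) and $\partial_{\theta\theta}$ commutes with $T_\omega$, one gets
\[
(-\fM_\vK + \sigma_\theta\partial_{\theta\theta})T_\omega = T_\omega(-\fM_{\vK_1}+\sigma_\theta\partial_{\theta\theta}),
\]
hence $\fOpR^\vK_{\sigma_\theta} T_\omega = T_\omega \fOpR^{\vK_1}_{\sigma_\theta}$ after inversion. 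Using that $\partial_\theta\fB_\vK = T_\omega(\partial_\theta\fB_{\vK_1})$, I conclude
\[
\fU^\vK_{\sigma_\theta} = \fOpR^\vK_{\sigma_\theta}\,T_\omega\,\partial_\theta\fB_{\vK_1} = T_\omega\fOpR^{\vK_1}_{\sigma_\theta}\partial_\theta\fB_{\vK_1} = T_\omega\fU^{\vK_1}_{\sigma_\theta},
\]
which is the claimed identity. No step here is subtle: the only thing worth flagging is that the invertibility of $-\fM_\vK+\sigma_\theta\partial_{\theta\theta}$ on the relevant space (provided by Proposition~\ref{prop:ResolventProperties}) is needed to pass from the intertwining relation to the conjugation of resolvents; the rest is trigonometry and the fact that $\cEll$ is rotation-invariant.
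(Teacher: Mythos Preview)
Your proof is correct and follows essentially the same approach as the paper: verify $\fB_\vK(\theta)=\fB_{\vK_1}(\theta-\omega)$ and $\fM_\vK(\theta)=\fM_{\vK_1}(\theta-\omega)$ by the trigonometric identities for $v(\theta)\cdot\vK$ and $v^\perp(\theta)\cdot\vK$, then deduce the identity for $\fU^\vK_{\sigma_\theta}$ from uniqueness for the elliptic equation. The only cosmetic difference is that you phrase the last step as an intertwining relation $\fOpR^\vK_{\sigma_\theta}T_\omega=T_\omega\fOpR^{\vK_1}_{\sigma_\theta}$, whereas the paper says directly that $\fU^\vK_{\sigma_\theta}$ and $\fU^{\vK_1}_{\sigma_\theta}(\cdot-\omega)$ solve the same elliptic equation; these are the same argument.
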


\begin{proof}
    We first check the property on $\fB_\vK$,
    \begin{equation*}
            \fB_\vK(\theta) = \frac{1}{\cEll}( 2\pi i |\vK| v^\perp(\theta)\cdot v(\omega) -  4\pi^2\tau |\vK|^2 (v^\perp(\theta) \cdot (v(\omega) \otimes v(\omega)) v(\theta)).
    \end{equation*}
    Using the following trigonometric identities,
    \begin{equation*}
         v^\perp(\theta)\cdot v(\omega) = -\sin(\theta-\omega), \ \ v^\perp(\theta) \cdot (v(\omega) \otimes v(\omega)) v(\theta) = -\frac{1}{2} \sin(2(\theta-\omega)),
    \end{equation*}

    we can write,
    \begin{equation*}
            \fB_\vK(\theta) = \fB_{\vK_1}(\theta - \omega),\; \; \text{ and } \; \; \partial_\theta\fB_\vK(\theta) = \partial_\theta\fB_{\vK_1}(\theta - \omega).
    \end{equation*}
    Similarly, using that $v(\theta)\cdot v(\omega) = \cos(\theta - \omega)$, we obtain,
    \begin{equation}
        \fM_\vK(\theta) = \fM_{\vK_1}(\theta - \omega).
    \end{equation}
    This implies that $\fU^\vK_{\sigma_\theta}$ and $ \fU^{\vK_1}_{\sigma_\theta}(\cdot - \omega)$ are solutions of the same elliptic equation. Hence, by uniqueness, we conclude the proof.
\end{proof}

In the case $\sigma_\theta = 0$, we have explicitly,
\begin{equation*}
    \fU^{\vK_1}_{0} = \fOpR^{\vK_1}_{0} \partial_\theta \fB_{\vK_1}  =  \frac{-\partial_\theta \fB_{\vK_1}}{\fM_{\vK_1}},\;\;
    \fV^{\vK_1}_{0}  = -\fOpR^{-\vK_1}_{0} 1 =  \frac{1}{\fM_{-\vK_1}}.
\end{equation*}
We note that the operators $\LinOpFk^{\vK}_{\chi}$ for $\sigma_\theta=0$ allow for explicit computations, which we leverage in Sections \ref{sec:BirfucationCurves} and Appendix \ref{sec:AppendixIntegrals}. 

The full operator $\mathrm{L}_\chi$ for $\sigma_\theta=0$ is not a Fredholm operator, as its kernel contains $\{\phi\in H^2_0:\phi(x,\theta) = a(\theta)\}$. However, the operator becomes Fredholm whenever $\sigma_\theta>0$, allowing us to extend the explicit information on the operator for $\sigma_\theta=0$, by resolvent estimates.

We use the following Fourier-description of the antipodal parities of Definition~\ref{def:antipodaleven}.
\begin{definition}
    We say that $u\in L^2_\theta(\dC)$ is \textbf{central-conjugate-symmetric}, if \begin{equation*}
        u\left(\theta\right) = \left(u\left(\theta+ \pi\right)\right)^*,
    \end{equation*}
    respectively, \textbf{central-conjugate-antisymmetric}, if
    \begin{equation*}
        u\left(\theta\right) = -\left(u\left(\theta+\pi \right) \right)^*.
    \end{equation*}
    
    We denote \textbf{c.c.s} and \textbf{c.c.a}, respectively.
\end{definition}

\begin{prop}
\label{prop:equivalenceAE-CCS}
Given a \textbf{real} function  $f\in L^2_0$, then f is \textbf{antipodal even }(resp. \textbf{antipodal odd}), if and only if its Fourier modes are \textbf{central-conjugate-symmetric} (resp. \textbf{central-conjugate-antisymmetric}).
\end{prop}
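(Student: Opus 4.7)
The plan is to convert the spatial antipodal reflection into a condition on the partial Fourier modes by a direct change of variables, and then to exploit the Hermitian symmetry in Fourier space that follows from $f$ being real.

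First I would apply the operator $\antipodreflOp$ to $f$ and take its partial Fourier transform in $x$. Using the change of variables $y=-x$ inside the integral,
\begin{equation*}
\mathcal{F}_x\{\antipodreflOp f\}_\vK(\theta)
= \int f(-x,\theta+\pi)\, e^{-i 2\pi \vK\cdot x}\,\mathrm{d}x
= \int f(y,\theta+\pi)\, e^{i 2\pi \vK\cdot y}\,\mathrm{d}y
= \hat{f}_{-\vK}(\theta+\pi).
\end{equation*}
Next, since $f$ is real, taking the complex conjugate of the defining integral $\hat{f}_\vK(\theta)=\int f(x,\theta)e^{-i2\pi\vK\cdot x}\mathrm{d}x$ yields the Hermitian symmetry $\hat{f}_{-\vK}(\theta) = \bigl(\hat{f}_\vK(\theta)\bigr)^*$ for every $\vK\in\mathbb{Z}^2$ and a.e.\ $\theta$. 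Combining the two identities gives
\begin{equation*}
\mathcal{F}_x\{\antipodreflOp f\}_\vK(\theta)
= \bigl(\hat{f}_\vK(\theta+\pi)\bigr)^*.
\end{equation*}

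Finally, I would invoke Fourier uniqueness (Parseval) on $\Torus^2_1$: the equality $\antipodreflOp f = f$ in $L^2_0$ is equivalent to the equality of the partial Fourier coefficients for every $\vK\in\mathbb{Z}^2$, which by the previous display reads
\begin{equation*}
\hat{f}_\vK(\theta) = \bigl(\hat{f}_\vK(\theta+\pi)\bigr)^*\qquad\text{for a.e. }\theta,
\end{equation*}
i.e.\ each $\hat{f}_\vK$ is central-conjugate-symmetric. The antipodal-odd case is identical with a sign change. There is no substantive obstacle here; the only point requiring a bit of care is keeping track of the sign flip $\vK\mapsto -\vK$ produced by the change of variables and pairing it with the reality-induced conjugation symmetry, so that the final condition is expressed intrinsically on each mode $\hat f_\vK$ (rather than relating modes at $\vK$ and $-\vK$).
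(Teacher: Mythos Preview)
Your argument is correct and is exactly what the paper has in mind: it states only that the proof ``follows directly by expressing Definition~\ref{def:antipodaleven} in Fourier modes,'' and your change-of-variables computation combined with the Hermitian symmetry from realness is precisely that expression. There is nothing to add.
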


The proof follows directly by expressing Definition~\ref{def:antipodaleven} in Fourier modes.


\begin{prop}
    \label{prop:PreservationOfCentralConjSym}
    The following operators preserve the central-conjugate-symmetry: $\fOpR^\vL_{\sigma_\theta}$, $\partial_\theta$, $\int \dd \theta$, $(\partial_{\thth})^{-1}$, $\cdot^*$.
    Multiplying by a purely imaginary number inverts the central-conjugate-symmetry. Furthermore, suppose that $g, f\in L^2_0$ are real-valued, $g$ is antipodal-odd and $f$ is antipodal-even.
    Then, we have
    \begin{equation}
        \int f g \dd x\dd \theta = 0.
    \end{equation}
    In particular $\partial_\theta \fB_{\vK}$ is central-conjugate-symmetric, and thus $\fU^{\vK}_\sigma$ is central-conjugate-symmetric.
\end{prop}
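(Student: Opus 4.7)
The plan is to verify each claim by direct substitution $\theta \mapsto \theta + \pi$ combined with complex conjugation, exploiting the basic observation that $v(\theta+\pi) = -v(\theta)$ (and hence $v^\perp(\theta+\pi)=-v^\perp(\theta)$).

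The first step is to establish a structural lemma: the Fourier multipliers $\hat{\mathsf{M}}_{\vL}$ and $\hat{\mathsf{B}}_{\vK}$ themselves are c.c.s. Indeed, $\hat{\mathsf{M}}_{\vL}(\theta+\pi) = 4\pi^2\sigma_x|\vL|^2 - 2\pi i\lambda v(\theta)\cdot\vL$, whose conjugate returns $\hat{\mathsf{M}}_{\vL}(\theta)$. Similarly, in the expression for $\hat{\mathsf{B}}_{\vK}$ the imaginary piece is linear in $v^\perp$ and the real piece is quadratic in $(v, v^\perp)$; the two $v$-factors in the $\tau$-term make that piece invariant under $\theta \mapsto \theta+\pi$, while conjugating cancels the sign flip in the linear piece.

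Once this is known, the preservation claims are immediate: if $u(\theta)=u(\theta+\pi)^*$, differentiating in $\theta$ and integrating in $\theta$ respect the identity (for the integral, $\int u\,\mathrm d\theta = \int u(\theta+\pi)^*\,\mathrm d\theta = (\int u\,\mathrm d\theta)^*$, giving a real constant, which is c.c.s.); applying $\cdot^*$ swaps both sides of the defining equation consistently; and for $(\partial_{\theta\theta})^{-1}$ on the mean-zero subspace of the torus, the function $w(\theta)\defeq u(\theta+\pi)^*$ solves the same ODE with the same (c.c.s.) right-hand side and the same average, so uniqueness forces $w=u$. The argument for $\fOpR^\vL_{\sigma_\theta}$ is identical, using the c.c.s. of $\hat{\mathsf{M}}_\vL$ from the first step so that $-\hat{\mathsf{M}}_\vL + \sigma_\theta\partial_{\theta\theta}$ maps c.c.s. to c.c.s. and is invertible by Proposition~\ref{prop:ResolventProperties}. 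Inversion of c.c.s. under multiplication by a purely imaginary $\alpha$ is immediate: $(\alpha u(\theta+\pi))^* = \alpha^*\, u(\theta) = -\alpha\, u(\theta)$.

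For the orthogonality claim, the idea is to perform the change of variables $(x,\theta)\mapsto(-x,\theta+\pi)$, which has unit Jacobian and is a bijection of $\Torus^2_1\times\Torus_{2\pi}$. This gives
\begin{equation*}
\int f(x,\theta)\, g(x,\theta)\,\mathrm d x\,\mathrm d\theta
=\int (\antipodreflOp f)(x,\theta)\,(\antipodreflOp g)(x,\theta)\,\mathrm d x\,\mathrm d\theta
=-\int f g\,\mathrm d x\,\mathrm d\theta,
\end{equation*}
so both integrals vanish. Finally, combining the structural lemma ($\hat{\mathsf{B}}_\vK$ is c.c.s.) with preservation under $\partial_\theta$ yields the c.c.s. of $\partial_\theta\hat{\mathsf{B}}_\vK$, and then applying $\fOpR^\vK_\sigma$ (preservation again) gives c.c.s. of $\fU^\vK_\sigma$.

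The proof is essentially a bookkeeping exercise; the only genuinely non-symbolic step is the resolvent case, where preservation rests on the uniqueness statement of Proposition~\ref{prop:ResolventProperties} applied to the c.c.s.~transform of the solution. I expect this to be the main (mild) obstacle: one has to confirm that $w(\theta)=u(\theta+\pi)^*$ lies in the correct functional space and solves the same equation on the torus, so that invertibility forces $w=u$. All other claims reduce to the sign behaviour of $v$ and $v^\perp$ under $\theta\mapsto\theta+\pi$.
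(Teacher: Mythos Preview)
Your proof is correct and follows essentially the same approach as the paper: for the resolvent case (the only one the paper spells out), you define $w(\theta)=u(\theta+\pi)^*$, use the c.c.s.\ of $\hat{\mathsf{M}}_\vL$ to see that $w$ solves the same elliptic equation, and conclude by uniqueness. Your treatment is in fact more complete, since you also handle the orthogonality claim via the change of variables $(x,\theta)\mapsto(-x,\theta+\pi)$ and verify the remaining elementary operators directly, all of which the paper leaves implicit.
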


\begin{proof}
    We only prove the case of $\fOpR^\vK_{\sigma_\theta}$.
    Suppose that $g$ is central-conjugate-symmetric. We denote $u = \fOpR^\vK_{\sigma_\theta} g$, and we introduce $v(\theta) = (u\left(\theta + \pi\right))^*$, so that $u$ and $v$ are solutions of the equations,
    \begin{align*}
        \left(-\fM_\vK + \sigma_\theta\partial_{\thth} \right)u &= g,\\
        \left(-\fM_\vK^*\left( \cdot + \pi\right) + \sigma_\theta\partial_{\thth} \right)v&= \left(g\left(\cdot + \pi\right)\right)^*.
    \end{align*}
    Then, using that $\fM_\vK$ and $g$ are \textbf{c.c.s}, we obtain that,
    \begin{equation}
        \left(-\fM_\vK + \sigma_\theta\partial_{\thth} \right)v = g.
    \end{equation}
    Hence, $u$ and $v$ are solutions of the same elliptic equation, so that, by uniqueness, we obtain that $u=v$.
    
    In the case where $g$ is central-conjugate-antisymmetric, we note that $i g$ is \textbf{c.c.s}, applying the above property and using that $\fOpR^\vL_{\sigma_\theta}$ is a linear operator over the complex field we obtain that $i\fOpR^\vL_{\sigma_\theta} g$ is \textbf{c.c.s}, multiplying by $-i$, we get $\fOpR^\vL_{\sigma_\theta} g$ and change the central-conjugate-symmmetry, so that $\fOpR^\vL_{\sigma_\theta} g$ is \textbf{c.c.a}.
\end{proof}


We introduce the following functions. They are the associated basis functions of the kernel and cokernel of the linearized operator at the $k$-th bifurcation point. 
\begin{center}
    \begin{minipage}{.8\textwidth}
        For a whole number $k\geq 1$, and $\vK_1 = (k,0)$ and $\vK_2 = (0,k)$, we define the following smooth real-valued functions, $\phi^{\vK_1}_{\sigma_\theta}, \phi^{\vK_2}_{\sigma_\theta}, \psi^{\vK_1}_{\sigma_\theta},  \psi^{\vK_2}_{\sigma_\theta}, \phi^{\vK_1,\bis}_{\sigma_\theta}, \phi^{\vK_2, \bis}_{\sigma_\theta}, \psi^{\vK_1,\bis}_{\sigma_\theta},  \psi^{\vK_2, \bis}_{\sigma_\theta}$ in Fourier, where for $\vM \in \dZ^2$,
        \begin{align}
            \mathcal{F}_x\{\phi^{\vK_j}_{\sigma}\}_\vM &=  \fU^{\vK_j}_\sigma \delta_{\vK_j \vM}+\fU^{-\vK_j}_\sigma \delta_{-\vK_j \vM}\\
            \mathcal{F}_x\{\psi^{\vK_j}_{\sigma}\}_\vM &= \fV^{\vK_j}_{\sigma} \delta_{\vK_j \vM}+\fV^{-\vK_j}_{\sigma} \delta_{-\vK_j \vM}.
        \end{align}
        The $\bis$ corresponds to translations of the previous functions, which in Fourier writes as,
        \begin{align}
            \mathcal{F}_x\{\phi^{\bis\vK_j}_{\sigma}\}_\vM &=  i\fU^{\vK_j}_\sigma \delta_{\vK_j \vM}-i\fU^{-\vK_j}_\sigma \delta_{-\vK_j \vM}\\
            \mathcal{F}_x\{\psi^{\bis \vK_j}_{\sigma}\}_\vM &= i\fV^{\vK_j}_{\sigma} \delta_{\vK_j \vM}-i\fV^{-\vK_j}_{\sigma} \delta_{-\vK_j \vM}.
        \end{align}
    \end{minipage}
\end{center}

\begin{prop}
    \label{prop:swapRelation}
    For the swap operator, we have
    \begin{equation}
        \swapOp \phi^{\vK_1}_{\sigma_\theta} = \phi^{\vK_2}_{\sigma_\theta}, \; \; \swapOp \psi^{\vK_1}_{\sigma_\theta} =  \psi^{\vK_2}_{\sigma_\theta}.
    \end{equation}
    The same identities hold for the $\bis$-functions.
\end{prop}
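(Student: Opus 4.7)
The plan is to verify the four identities Fourier-mode by Fourier-mode in the positional variable. Observe that on any positional mode $e^{i 2\pi \vM \cdot x}$ with $\theta$-coefficient $g(\theta)$, the swap operator acts as
\[
\swapOp\bigl(g(\theta)\, e^{i 2\pi \vM \cdot x}\bigr) \;=\; g(-\theta - \tfrac{\pi}{2})\, e^{i 2\pi (-m_2,-m_1)\cdot x},
\]
so $\swapOp$ sends the $\pm \vK_1$-modes to the $\mp \vK_2$-modes. Therefore $\swapOp \phi^{\vK_1}_{\sigma_\theta}$ has the same Fourier support as $\phi^{\vK_2}_{\sigma_\theta}$, and the claim $\swapOp \phi^{\vK_1}_{\sigma_\theta} = \phi^{\vK_2}_{\sigma_\theta}$ reduces to the two $\theta$-level identities
\[
\fU^{\vK_1}_{\sigma_\theta}(-\theta - \tfrac{\pi}{2}) = \fU^{-\vK_2}_{\sigma_\theta}(\theta), \qquad \fU^{-\vK_1}_{\sigma_\theta}(-\theta - \tfrac{\pi}{2}) = \fU^{\vK_2}_{\sigma_\theta}(\theta).
\]
These are mutually complex conjugate via $(\fU^\vK_{\sigma_\theta})^* = \fU^{-\vK}_{\sigma_\theta}$, so it suffices to establish the first.

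The key step is a uniqueness-of-ODE-solution argument, in the spirit of the proof of Proposition~\ref{prop:RotationUKukomega}. I would define $W(\theta) \defeq \fU^{\vK_1}_{\sigma_\theta}(-\theta - \tfrac{\pi}{2})$ and then, by a change of variables in the defining equation $(-\fM_{\vK_1} + \sigma_\theta\, \partial_{\theta\theta})\fU^{\vK_1}_{\sigma_\theta} = \partial_\theta \fB_{\vK_1}$, check that
\[
(-\fM_{\vK_1}(-\theta - \tfrac{\pi}{2}) + \sigma_\theta\, \partial_{\theta\theta})\, W(\theta) \;=\; (\partial_\theta \fB_{\vK_1})(-\theta - \tfrac{\pi}{2}).
\]
Using Table~\ref{tb:FourierMultipliers} together with $\cos(\theta + \tfrac{\pi}{2}) = -\sin\theta$ and $\cos(2\theta + \pi) = -\cos 2\theta$, a direct check gives the pointwise identities
\[
\fM_{\vK_1}(-\theta - \tfrac{\pi}{2}) \;=\; \fM_{-\vK_2}(\theta), \qquad (\partial_\theta \fB_{\vK_1})(-\theta - \tfrac{\pi}{2}) \;=\; \partial_\theta \fB_{-\vK_2}(\theta).
\]
Hence $W$ solves the same elliptic ODE as $\fU^{-\vK_2}_{\sigma_\theta} = \fOpR^{-\vK_2}_{\sigma_\theta} \partial_\theta \fB_{-\vK_2}$, and invertibility of the resolvent forces $W = \fU^{-\vK_2}_{\sigma_\theta}$.

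The proof of $\swapOp \psi^{\vK_1}_{\sigma_\theta} = \psi^{\vK_2}_{\sigma_\theta}$ runs identically with $\fV^\vK_{\sigma_\theta} = -\fOpR^{-\vK}_{\sigma_\theta} 1$ replacing $\fU^\vK_{\sigma_\theta}$; the right-hand side $1$ is trivially invariant under the $\theta$-substitution, and the analogous identity $\fM_{-\vK_1}(-\theta - \tfrac{\pi}{2}) = \fM_{\vK_2}(\theta)$ produces the correct elliptic ODE on the conjugate index. The $\bis$-versions follow by linearity over $\dC$, since $\phi^{\bis\vK_j}_{\sigma_\theta}$ and $\psi^{\bis\vK_j}_{\sigma_\theta}$ differ from their unadorned counterparts only by factors of $\pm i$ on each Fourier coefficient; the argument above then yields the same conclusion verbatim. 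I do not anticipate any substantive obstacle; the only care needed is in bookkeeping the sign introduced by the chain rule under the substitution $\xi = -\theta - \pi/2$, and in remembering that the $x$-swap composed with sign-reversal sends $\vK_1$ to $-\vK_2$ rather than to $\vK_2$.
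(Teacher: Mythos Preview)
Your proof is correct and follows essentially the same route as the paper: compute the Fourier-space action of $\swapOp$ (sending the $\pm\vK_1$-modes to the $\mp\vK_2$-modes with $\theta\mapsto -\theta-\tfrac{\pi}{2}$), then verify the pointwise identities $\fM_{\vK_1}(-\theta-\tfrac{\pi}{2})=\fM_{-\vK_2}(\theta)$ and $(\partial_\theta\fB_{\vK_1})(-\theta-\tfrac{\pi}{2})=\partial_\theta\fB_{-\vK_2}(\theta)$, and conclude $\fU^{\vK_1}_{\sigma_\theta}(-\theta-\tfrac{\pi}{2})=\fU^{-\vK_2}_{\sigma_\theta}(\theta)$ by uniqueness for the elliptic ODE. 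The only cosmetic difference is that the paper passes through Proposition~\ref{prop:RotationUKukomega} to obtain the multiplier identities, whereas you verify them directly from Table~\ref{tb:FourierMultipliers}; the $\psi$- and $\bis$-cases are handled identically in both.
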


\begin{proof}
    We start by noting the following representation of the swap operator in Fourier space. Denote $\mathbf{S}$ for the rotation matrix (not to be confused with swap operator $\swapOp$),
    \begin{equation*}
        \mathbf{S} = \begin{pmatrix}
            0 & 1\\
            1 & 0 
        \end{pmatrix},
    \end{equation*}
    the following identity holds in Fourier space for the swap operator,
    \begin{align*}
        \mathcal{F}_x\{\swapOp g\}_{\vM} &= \int g(-x_2, -x_1, -\theta - \tfrac{\pi}{2}) e^{-i2\pi \vM\cdot x}\dd x,\\
        &=  \int g(x_1, x_2, -\theta - \tfrac{\pi}{2}) e^{i2\pi (\mathbf{S} \vM)\cdot x}\dd x,\\
        &= \mathcal{F}_x\{ g\}_{- (\mathbf{S}\vM)}\left(-\theta - \tfrac{\pi}{2}\right).
    \end{align*}
    Therefore, we have that,
    \begin{align*}
        \mathcal{F}_x\{\swapOp \phi^{\vK_1}\}_\vM & =  \fU^{\vK_1} \left(-\theta - \tfrac{\pi}{2}\right) \delta_{- (\mathbf{S}\vM),\vK_1 }+\fU^{-\vK_1}\left(-\theta - \tfrac{\pi}{2}\right)\delta_{-\vK_1, - (\mathbf{S}\vM)},\\
        & =  \fU^{\vK_1} \left(-\theta - \tfrac{\pi}{2}\right) \delta_{-\vK_2 \vM}+\fU^{-\vK_1}\left(-\theta - \tfrac{\pi}{2}\right)\delta_{\vK_2, \vM}.
    \end{align*}
    From Proposition~\ref{prop:RotationUKukomega} and the definition of $\fB$ and $\fM$ in Table~\ref{tb:FourierMultipliers}, we obtain that,
    \begin{align*}
        \partial_\theta \fB_{\vK_1}(-\theta-\pi/2)=& \ \partial_\theta \fB_{\vK_2}(-\theta)= \ \partial_\theta \fB_{-\vK_2}(\theta),\\
        \fM_{\vK_1}(-\theta-\pi/2)=& \ \fM_{\vK_2}(-\theta) = \ \fM_{-\vK_2}(\theta).
    \end{align*}
    Hence, from the uniqueness of the elliptic equations, $\fU^{\vK_1}(-\theta-\pi/2)= \fU^{-\vK_2}(\theta)$. Plugging this into the previous identity leads to the required result. The same proof holds for the $\psi^{\vK}$-functions.
\end{proof}

\subsection{Spectral Analysis}

We now state and prove the main properties of the linearized operator of the functional $\F$ around the constant solution, serving as the starting point of the bifurcation analysis. Specifically, we construct the bifurcation sequence at non-Pythagorean wave-numbers, we obtain the explicit basis of the kernel, and compute the inverse operator in partial Fourier space.

\begin{thm}
    \label{thm:LinOpKern}
    For any $\sigma_x,\lambda >0$ fixed, for every whole number $k\geq 1$, there exists $\sigma^k_\theta>0$, such that for any $0<\sigma_\theta<\sigma^k_\theta$, there exist $\chi^k > 0$, such that denoting $\LinOp_{\chi^k} = \LinOp_k$, the following dimension identity holds,
    \begin{equation*}
        \dim \ker \LinOp_k = \dim \ker (\LinOp_k)^\star = \dim \coker \LinOp_k = \card \{ \vK \in \dZ^2 \setminus\{0\} , |\vK|^2 = k^2\}.
    \end{equation*}
    In particular, for non-Pythagorean whole numbers $k$, the kernel and cokernel are given by the subspaces generated by the functions,
    \begin{align*}
        \ker \LinOp_k &= \Span \{\phi^{\vK_1}_{\sigma_\theta},  \phi^{\vK_2}_{\sigma_\theta}, \phi^{\bis\vK_1}_{\sigma_\theta}, \phi^{\bis\vK_2}_{\sigma_\theta}\},\\
        \coker \mathrm{L}_k&= \ker (\LinOp_k)^* = \Span \{\psi^{\vK_1}_{\sigma_\theta},\psi^{\vK_2}_{\sigma_\theta},  \psi^{\bis\vK_1}_{\sigma_\theta}, \psi^{\bis\vK_2}_{\sigma_\theta}\}.
    \end{align*}
\end{thm}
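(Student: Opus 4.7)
The plan is to decompose $\LinOp_\chi$ into its partial Fourier modes in $\x$, reducing the kernel computation to a mode-by-mode analysis. At $\vK = 0$, one has $\fM_0 = 0$ and $\fB_0 = 0$, so $\LinOpFk^0_\chi = \sigma_\theta \partial_{\thth}$ acting on zero-mean functions, which is invertible for $\sigma_\theta > 0$. For $\vK \neq 0$, using that $\fU^\vK_{\sigma_\theta} = \fOpR^\vK_{\sigma_\theta}\partial_\theta \fB_\vK$, I factor
$\LinOpFk^\vK_\chi = \bigl(-\fM_\vK + \sigma_\theta \partial_{\thth}\bigr)\bigl(I - \chi \fU^\vK_{\sigma_\theta}\textstyle\int \cdot\, \dd\theta\bigr)$,
exhibiting the mode operator as a rank-one perturbation of an invertible resolvent operator. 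Consequently, $\ker \LinOpFk^\vK_\chi$ is nontrivial iff $1 = \chi \int \fU^\vK_{\sigma_\theta}\, \dd\theta$, in which case it is one-dimensional and spanned by $\fU^\vK_{\sigma_\theta}$.

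By the rotation identity of Proposition~\ref{prop:RotationUKukomega}, $\fU^\vK_{\sigma_\theta}$ is a $\theta$-translate of $\fU^{(|\vK|,0)}_{\sigma_\theta}$, so $\int \fU^\vK_{\sigma_\theta}\, \dd\theta$ depends only on $|\vK|$. By the central-conjugate-symmetry of $\fU^{(k,0)}_{\sigma_\theta}$ (Proposition~\ref{prop:PreservationOfCentralConjSym}), this integral is real, justifying the definition $\chi^k := 1/\int \fU^{(k,0)}_{\sigma_\theta}\, \dd\theta$. To show $\chi^k > 0$, I first evaluate at $\sigma_\theta = 0$, where $\fU^{(k,0)}_0 = -\partial_\theta \fB_{(k,0)}/\fM_{(k,0)}$ is explicit from Table~\ref{tb:FourierMultipliers}. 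Extracting the real part (the imaginary part integrates to zero by parity in $\theta$) reduces the computation to standard rational-trigonometric integrals, yielding an expansion of the form $\int \fU^{(k,0)}_0\, \dd\theta = \frac{4\pi^2 k}{\cEll \lmbk}(1+\tauk) + O(\sigk)$ as $\sigk \to 0$. This is strictly positive for $\tau \geq 0$ under the assumption $\sigk < \mathrm{c}$ with $\mathrm{c}$ chosen small. The resolvent continuity estimate~\eqref{est:ResolventContinuity} then transfers positivity to all sufficiently small $\sigma_\theta > 0$, giving $\chi^k > 0$.

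For non-Pythagorean $k$, the integer equation $|\vK|^2 = k^2$ admits exactly the four solutions $\{\pm\vK_1, \pm\vK_2\}$ with $\vK_1 = (k,0), \vK_2 = (0,k)$. Since the critical value $\chi^k$ depends only on $|\vK|$, each of these modes contributes a one-dimensional complex kernel spanned by $\fU^{\vK}_{\sigma_\theta}$. Restricting to real-valued functions, the reality constraint $\hat{f}_{-\vK} = (\hat{f}_\vK)^*$ pairs $\vK$ with $-\vK$, giving two real dimensions per pair and the explicit real basis $\{\phi^{\vK_1}_{\sigma_\theta}, \phi^{\vK_2}_{\sigma_\theta}, \phi^{\bis\vK_1}_{\sigma_\theta}, \phi^{\bis\vK_2}_{\sigma_\theta}\}$, corresponding to the $(1,1)$ and $(i,-i)$ coefficient combinations in each pair. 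To ensure no mode with $|\vK'| \neq k$ is simultaneously in the kernel, I choose $\sigma^k_\theta$ small enough that $\chi^k(\sigma_\theta)$ is separated from $\chi^{k'}(\sigma_\theta)$ for all other integers $k'$ in the admissible (finite) range $2\pi k'\sigma_x < \mathrm{c}\lambda$; the leading-order expression $\chi^k \sim (\gamma + 4\pi^2 k^2\sigma_c)\lambda/(2\pi(1+2\pi\tau k))$ provides this separation up to a continuous perturbation as $\sigma_\theta \to 0^+$. The cokernel follows by an analogous rank-one analysis of $(\LinOpFk^\vK_\chi)^\star$: its kernel at mode $\vK$ is spanned by $\fV^\vK_{\sigma_\theta}$, and the self-consistency condition reduces via the transpose identity $\int \fU^\vK_{\sigma_\theta} = -\int (\partial_\theta \fB_\vK) \fV^{-\vK}_{\sigma_\theta}\, \dd\theta$ to the same critical value $\chi^k$, yielding the basis $\{\psi^{\vK_1}_{\sigma_\theta}, \psi^{\vK_2}_{\sigma_\theta}, \psi^{\bis\vK_1}_{\sigma_\theta}, \psi^{\bis\vK_2}_{\sigma_\theta}\}$.

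The main technical hurdle is the explicit computation of $\int \fU^{(k,0)}_0\, \dd\theta$ and the verification that the $\tau$-dependent correction preserves the sign in the expansion, which is what pins down the assumption $\sigk < \mathrm{c}$. A secondary, but necessary, ingredient is the separation of the critical values $\chi^k$ in the admissible range, which justifies the $k$-dependent choice of $\sigma^k_\theta$ and ensures the kernel dimension count is exactly $\card\{\vK:|\vK|^2=k^2\}$.
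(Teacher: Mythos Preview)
Your overall strategy---Fourier decomposition in $\x$, rank-one reduction at each mode, identification of the critical condition $\tfrac{\chi}{2\pi}\int \fU^{(|\vK|,0)}_{\sigma_\theta}\,\dd\theta = 1$, and transfer from $\sigma_\theta=0$ via resolvent continuity---matches the paper's proof. However, the execution has two related gaps that both stem from replacing the exact inviscid integral by its $\sigk\to 0$ expansion.

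First, the theorem as stated requires no smallness on $\sigk$: it asserts $\chi^k>0$ for \emph{every} $k\geq 1$ with $\sigma_x,\lambda$ arbitrary. Your positivity argument, based on $\int \fU^{(k,0)}_0\,\dd\theta = \tfrac{4\pi^2 k}{\cEll\lmbk}(1+\tauk)+O(\sigk)$, only works in the regime $\sigk<\mathrm{c}$, so you prove a weaker statement. The paper instead computes the exact closed form
\[
\int_0^{2\pi}\fU^{(k,0)}_0\,\dd\theta = \frac{4\pi^2 k\,\Aek}{\cEll\lmbk}\bigl(1-|\Azink|+2\sigk|\Azink|\tauk\bigr),
\]
which is manifestly positive for all $\sigk>0$ and $\tau\geq 0$.

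Second, and more seriously, your separation argument is incomplete. To show $\dim\ker\LinOp_k = \card\{|\vK|=k\}$ you must exclude \emph{every} mode $\vL$ with $|\vL|\neq k$, not just those in the finite window $2\pi|\vL|\sigma_x<\mathrm{c}\lambda$. For large $|\vL|$ your asymptotic breaks down (since $\upsigma_{|\vL|}$ is not small), so you have no control on $\int\fU^\vL_{\sigma_\theta}$ and cannot rule out accidental resonance $\tfrac{\chi^k}{2\pi}\int\fU^\vL_{\sigma_\theta}=1$. The paper closes this gap by observing that the exact formula above, as a function of the real parameter $k>0$, is strictly decreasing and tends to zero at infinity; this monotonicity, combined with the uniform-in-$\vL$ resolvent estimate \eqref{est:ResolventContinuity}, gives the required separation for all $|\vL|\neq k$ once $\sigma_\theta$ is small. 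This global monotonicity is the missing ingredient in your argument. (A minor point: the critical condition carries a factor $\tfrac{1}{2\pi}$ from the definition of $\LinOp_\chi$, so $\chi^k = 2\pi/\int\fU^{(k,0)}_{\sigma_\theta}$.)
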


In the rest of the analysis, $k$ denotes a non-Pythagorean wave number, and $\LinOp_k$ the linearized operator at the bifurcation point $\chi^k$ associated with some $\sigma_\theta$ sufficiently small.


\begin{proof}
Let $\phi \in \ker(\LinOp_{\chi})$.
Then, in Fourier, we have that, for any $\vK \in \dZ^2 \setminus\{0\}$,
\begin{equation*}
   \mathcal{F}_x\{\LinOp_{\chi}\phi\}_\vK = \left(-\hat{\mathsf{M}}_\vK + \sigma_\theta \partial_{\thth}\right) \hat{\phi}_\vK - \frac{\chi}{2\pi} \partial_\theta\fB_\vK \int \hat{\phi}_\vK \dd \theta = 0
\end{equation*}
Using the invertibility of the elliptic operator (the resolvent as in Prop.~\ref{prop:ResolventProperties}), we obtain equivalently that, 
\begin{equation*}
    \hat{\phi}_\vK =   \frac{\chi}{2\pi} \left(-\hat{\mathsf{M}}_\vK +\sigma_\theta\partial_{\thth}\right)^{-1}\partial_\theta \fB_\vK\int \hat{\phi}_\vK \dd \theta 
\end{equation*}
From the definition of $\fU^\vK_{\sigma_\theta}$, the equation rewrites as,
\begin{equation}
    \label{eq:EigenValdef}
    \hat{\phi}_\vK  =   \frac{\chi}{2\pi} \fU^\vK_{\sigma_\theta} \int \hat{\phi}_\vK \dd \theta. 
\end{equation}
Integrating \eqref{eq:EigenValdef}, we obtain that either $\int \hat{\phi}_\vK  \dd \theta = 0 $ and thus $\hat{\phi}_\vK  = 0$, or that $\tfrac{\chi}{2\pi}\int \fU^\vK_{\sigma_\theta} \dd\theta = 1$. From Proposition~\ref{prop:RotationUKukomega}, this is equivalent to exchanging $\vK$ with $\vK_1$, $\frac{\chi}{2\pi}\int \fU^{\vK_1}_{\sigma_\theta} d\theta = 1$. As $\fU^{\vK_1}_{\sigma_\theta}$ is central-conjugate-symmetric, from Proposition~\ref{prop:PreservationOfCentralConjSym}, we have $\int \fU^{\vK_1}_{\sigma_\theta}\dd \theta \in \dR$, for any $\sigma_\theta\geq 0$.

We now construct our sequence of bifurcation points. To this purpose, we fix a wave number $k\geq 1$, and using the explicit form of the inviscid inverse operator, we obtain,
\begin{equation*}
        \int_0^{2\pi} \fU^{\vK_1}_{0} \mathrm{d}\theta = \frac{4\pi^2 k \Aek}{\cEll \lmbk} (1-|\Azink| + 2\sigk |\Azink| \tauk),
\end{equation*}using the notations as in \eqref{eq:krescaledConstants} and \eqref{eq:defekzink}.

From the explicit formula, we note that the function $k \mapsto \int \fU^{\vK_1}_{0}\mathrm{d}\theta$ is even, and non-negative strictly decreasing on $(0,+\infty)$, converging to zero at $\pm\infty$. Now defining $\sigma^k_\theta>0$ as the largest $\sigma$ such that, for any $0< \sigma < \sigma^k_\theta$,
\begin{equation*}
    \left|\int_0^{2\pi} \fU^{\vK_1}_{0} \mathrm{d}\theta  - \int_0^{2\pi} \fU^{\vK_1}_{\sigma} \mathrm{d}\theta  \right| < \left|\int_0^{2\pi} \fU^{\vK_1}_{0} \mathrm{d}\theta \right|,
\end{equation*}
then for any $0 < \sigma_\theta < \sigma^k_\theta$, we can define
\begin{equation*}
    \tag{$\chi^k$}
    \chi^k(\sigma_\theta) \defeq \frac{2\pi}{\int_0^{2\pi} \fU^{\vK_1}_{\sigma_\theta} \mathrm{d}\theta}.
\end{equation*}
This together with equation~\eqref{eq:EigenValdef} ensure that the restriction in Fourier of operator $\LinOp_{\chi^k}$ has a non-empty kernel and is given by,
\begin{equation}
    \label{eq:kernelspaninfourier}
    \Span \{\fU^\vK_{\sigma}, i \fU^\vK_{\sigma} :\vK \in \dZ^2,|\vK| = k\}.
\end{equation}

The operator {\small $ (-\hat{\mathsf{M}}_\vK + \sigma \partial_{\thth}  )$}, is Fredholm and of index zero, as it admits a compact inverse, and {\small $\LinOpFk^\vK_{\chi^k} - (-\hat{\mathsf{M}}_\vK + \sigma \partial_{\thth})$} is compact. Therefore, the index of $\LinOpFk^{\vK}_{\chi^k}$ is zero and thus the dimension of the cokernel is equal to the dimension of the kernel, with cokernel functions satisfying,
\begin{equation*}
    \hat{\psi}_\vK = \frac{\chi^k}{2\pi}\left(-\hat{\mathsf{M}}_{-\vK} + \sigma_\theta \partial_{\thth} \right)^{-1} \left(\int \partial_\theta \mathsf{B}_{-\vK} \hat{\psi}_\vK \dd \theta\right).
\end{equation*}
So that, the cokernel of  the restriction in Fourier of operator $\LinOpFk^{\vK}_{\chi^k}$ is given by,

\begin{equation}
    \Span \left\{\fV^\vK_{\sigma}, i \fV^\vK_{\sigma}:\vK \in \dZ^2,|\vK| = k\right\}.
\end{equation}
We now check, that given a function in the kernel of $\LinOp_k$, then all its modes, for $\vL \in \dZ^2$ such that $|\vL| \neq k$, must be zero. 

First for $\vL=(0,0) = \mathsf{0}$, 
we note that since we are in the space such that $\int \phi \dd \theta \dd x = 0$, this implies that, $\int \hat{\phi}_{\mathsf{0}} \dd \theta = 0$. The equation on the wave number $\mathsf{k}=\mathsf{0}$ then writes as,
\begin{equation*}
    \mathcal{F}_x\{\LinOp_k\phi\}_{\mathsf{0}} = \LinOpFk^0_{\chi} \hat{\phi}_{\mathsf{0}} = \sigma_\theta\partial_{\theta\theta} \hat{\phi}_{\mathsf{0}} = 0,
\end{equation*}
and the Poincaré inequality implies that $\hat{\phi}_{\mathsf{0}}  = 0$.

For $|\vL| \neq k $ and $\vL \neq \mathsf{0}$, we write the estimate
{\small
\begin{equation}
    \label{est:KernelAnalysisEstimeellK}
    \begin{split}
        \hspace{-1em} \left| \int_0^{2\pi} \fU^{\vL}_{\sigma} \mathrm{d}\theta  - \int_0^{2\pi} \fU^{\vK_1}_{\sigma} \mathrm{d}\theta  \right| & \geq \left| \int_0^{2\pi} \fU^{\vL}_{0} \mathrm{d}\theta  - \int_0^{2\pi} \fU^{\vK_1}_{0} \mathrm{d}\theta  \right| - \left| \int_0^{2\pi} \fU^{\vL}_{\sigma} \mathrm{d}\theta  - \int_0^{2\pi} \fU^{\vL}_{0} \mathrm{d}\theta  \right|\\
        &\hspace{3em} - \left| \int_0^{2\pi} \fU^{\vK_1}_{\sigma} \mathrm{d}\theta  - \int_0^{2\pi} \fU^{\vK_1}_{0} \mathrm{d}\theta  \right|,
    \end{split}
\end{equation}
}and using the uniform convergence in $\sigma$ w.r.t. $\vL$, as the $\fU_0^{\vL}$-s are regular functions, we obtain that,
\begin{equation*}
    \max_{\ell \in \dZ^2 \setminus\{0\}}\left| \int_0^{2\pi} \fU^\ell_{\sigma} \mathrm{d}\theta  - \int_0^{2\pi} \fU^\ell_{0} \mathrm{d}\theta  \right| \leq C\sigma,
\end{equation*}
for some constant $C$.

We introduce the finite set of neighboring modes of the bifurcation modes as,
\begin{equation*}
    \mathcal{A}^k = \{ \vL \in \dZ^2 : |\vL|\neq k, \dist(\mathbb{S}^2_k, \vL) \leq 1\},
\end{equation*}
where $\mathbb{S}^2_k$ is the 2D sphere of radius $k$.

Then denoting,
\begin{equation*}
    C^k = \sup_{\ell \in \dZ^2 \setminus(\{0\} \cup \mathbb{S}^2_k) }\left| \int_0^{2\pi} \fU^{\vL}_{0} \mathrm{d}\theta  - \int_0^{2\pi} \fU^{\vK_1}_{0} \mathrm{d}\theta  \right|,
\end{equation*}
we obtain from the strict monotony of $\int \fU^{\vK_1}_0 \dd \theta $ in $k$, that
\begin{equation*}
    C^k = \max_{\vL \in \mathcal{A}^k} \left| \int_0^{2\pi} \fU^{\vL}_{0} \mathrm{d}\theta  - \int_0^{2\pi} \fU^{\vK_1}_{0} \mathrm{d}\theta  \right|  > 0.
\end{equation*}
Plugging this back in \eqref{est:KernelAnalysisEstimeellK}, we obtain that,
\begin{equation*}
    \begin{split}
        \left| \int_0^{2\pi} \fU^\ell_{\sigma_\theta } \mathrm{d}\theta  - \int_0^{2\pi} \fU^{\vK_1}_{\sigma_\theta} \mathrm{d}\theta  \right|  \geq C^k - C\sigma_\theta.
    \end{split}
\end{equation*}
Changing $\sigma^k_\theta >0$ to a smaller value (such that the above is strictly positive for $\sigma^k_\theta > \sigma >0$) we conclude that, by definition of $\chi^k$, (given $\sigma_k >\sigma>0$)
\begin{equation*}
    \frac{\chi^k}{2\pi} \int_0^{2\pi} \fU^{\vL}_{\sigma } \dd \theta  \neq 1,
\end{equation*}
for all $ \in \dZ^2\setminus\{0\}$ such that $|\vL| \neq k$. This implies according \eqref{eq:EigenValdef}, that for $\phi \in \ker \LinOp_k$, $\mathcal{F}_x\{\phi\}_\vL =0$,
and thus the required result. The conclusion on the cokernel follows with the same argument on the index as the one given earlier for any mode $|\vL| \neq k$, and the conclusion on the operator follows.
\end{proof}

In the following result, we give a semi-explicit form for the inverse of the linearized operator.
\begin{prop}
    \label{prop:InverseL}
    The operator $\LinOp_k$ has a bounded inverse $\left(\LinOp_k\right)^{-1} : \Range (\LinOp_k) \to H^2_0$, that preserves the $x$-modes. That is, if $g \in L^2_0$ such that, $\mathcal{F}_x\{g\}_{\vL} = \mathsf{g} \delta_{\xi \vL} + \mathsf{g}^* \delta_{-\xi \vL}$, for some $\xi \in \dZ^2 \setminus \mathbb{S}^2_k$, where $\mathbb{S}^2_k = \{\vL \in \dZ^2, |\vL| = k\}$,
    then,
    \begin{equation}
        \label{eq:modePreservingInverseL}
        \mathcal{F}_x\{ \left(\LinOp_k\right)^{-1} g\}_{\vL} = \mathsf{h} \delta_{\xi \vL} + \mathsf{h}^* \delta_{-\xi \vL} ,,
    \end{equation}
    for some complex-valued function $\mathsf{h}$.
    
    Explicitly, for any function $g \in L^2_0$ such that $\hat{g}_{\vK} = 0$ for all $\vK \in \mathbb{S}^2_k$, then we have,
    \begin{equation}
        \label{eq:InversionFormula}
        \mathcal{F}_x\left\{ \left(\LinOp_k\right)^{-1} g\right\}_{\vL}  =
        \begin{cases}
            \frac{1}{\sigma_\theta} (\partial_{\thth})^{-1} \hat{g}_{0}, & \text{if }\vL = 0,\\
             0, & \text{if }\vL \in \mathbb{S}^2_k,\\
            \mathsf{R}^\vL_{\sigma_\theta} \hat{g}_\vL + \left(\frac{\chi^k}{2\pi \mathsf{K}^{|\vL|}(\chi^k, \sigma_\theta)}\int \mathsf{R}^\ell_{\sigma_\theta} \hat{g}_\vL \dd \theta \right)\fU^\vL_{\sigma_\theta}, & \text{otherwise},
        \end{cases}
    \end{equation}
    where we define for any $|\vL| \geq 1$, $\mathsf{K}^{|\vL|} : \dR \times \dR_+ \to \dR$ as, $\mathsf{K}^{|\vL|}(\chi, \sigma) =  1- \frac{\chi}{2\pi}\int \fU^{\vL}_\sigma \dd \theta$.
    
    In particular, for a function $g$ as above, $\left(\LinOp_k\right)^{-1} $ preserves the central-conjugate-symmetry.
\end{prop}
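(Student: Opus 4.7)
The plan is to invert $\LinOp_k$ mode-by-mode in the positional Fourier decomposition, exploiting the fact that $\LinOp_k$ acts as a direct sum of fiber operators $\LinOpFk^\vL_{\chi^k}$ on each $x$-Fourier pair. Theorem~\ref{thm:LinOpKern} already established that $\LinOp_k$ is Fredholm of index zero with closed range, so the restriction to the orthogonal complement of $\ker(\LinOp_k)$ is a bounded bijection onto $\Range(\LinOp_k)$, and the open mapping theorem yields boundedness of $(\LinOp_k)^{-1}$. The mode-preservation property~\eqref{eq:modePreservingInverseL} is then immediate: $\LinOp_k$ commutes with the orthogonal projection onto the span of any conjugate pair $\{e^{i2\pi\xi\cdot x},e^{-i2\pi\xi\cdot x}\}$, and so does its inverse.

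For the explicit formula, I would solve $\LinOpFk^\vL_{\chi^k}\hat{u}_\vL = \hat{g}_\vL$ for each $\vL$ separately. When $\vL = \mathsf{0}$, the fiber operator collapses to $\sigma_\theta\partial_{\thth}$ on zero-mean functions in $\theta$, which is directly inverted by $\sigma_\theta^{-1}\partial_{\thth}^{-1}$. When $\vL\in\mathbb{S}^2_k$, the hypothesis $\hat{g}_\vL = 0$ combined with the kernel characterization from Theorem~\ref{thm:LinOpKern} allows choosing $\hat{u}_\vL = 0$ as the representative orthogonal to $\ker(\LinOp_k)$. For the remaining $\vL$, I would apply the resolvent $\fOpR^\vL_{\sigma_\theta}$ (Proposition~\ref{prop:ResolventProperties}) to obtain
\begin{equation*}
\hat{u}_\vL \;-\; \frac{\chi^k}{2\pi}\,\fU^\vL_{\sigma_\theta}\int \hat{u}_\vL\,\dd\theta \;=\; \fOpR^\vL_{\sigma_\theta}\hat{g}_\vL,
\end{equation*}
then integrate in $\theta$ to solve for $\int\hat{u}_\vL\,\dd\theta$ in terms of $\mathsf{K}^{|\vL|}(\chi^k,\sigma_\theta)$, and substitute back to recover the stated closed-form expression.

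The one point requiring care is ensuring that $\mathsf{K}^{|\vL|}(\chi^k,\sigma_\theta)\neq 0$ for every $\vL\notin\mathbb{S}^2_k\cup\{\mathsf{0}\}$, since otherwise the division in the formula is ill-defined. However, this non-vanishing is precisely the content of the strict inequality derived while fixing $\sigma^k_\theta$ in the proof of Theorem~\ref{thm:LinOpKern} (via estimate~\eqref{est:KernelAnalysisEstimeellK}): by the definition of $\chi^k$, the identity $\tfrac{\chi^k}{2\pi}\int\fU^\vL_{\sigma_\theta}\,\dd\theta = 1$ holds only when $|\vL| = k$, and the quantitative version gives a uniform positive lower bound on $|\mathsf{K}^{|\vL|}|$ for the other wave-numbers, which would also underpin the boundedness of the inverse on the relevant subspace.

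Finally, preservation of central-conjugate-symmetry follows mechanically from~\eqref{eq:InversionFormula}. Every building block — the resolvent $\fOpR^\vL_{\sigma_\theta}$, the antiderivative $\partial_{\thth}^{-1}$ on zero-mean, the integral $\int\dd\theta$, multiplication by the c.c.s.\ function $\fU^\vL_{\sigma_\theta}$, and multiplication by the constant $\tfrac{\chi^k}{2\pi\mathsf{K}^{|\vL|}(\chi^k,\sigma_\theta)}$, which is real since $\int\fU^\vL_{\sigma_\theta}\,\dd\theta\in\dR$ by Proposition~\ref{prop:PreservationOfCentralConjSym} — preserves c.c.s., so I would conclude by invoking Proposition~\ref{prop:PreservationOfCentralConjSym} componentwise. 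The main obstacle, such as it is, lies in bookkeeping the quantitative lower bound on $|\mathsf{K}^{|\vL|}|$ that is needed both for well-definedness and for uniform boundedness of the inverse.
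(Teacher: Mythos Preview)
Your proposal is correct and follows essentially the same approach as the paper: mode-by-mode inversion via the fiber operators $\LinOpFk^\vL_{\chi^k}$, applying the resolvent $\fOpR^\vL_{\sigma_\theta}$, integrating in $\theta$ to solve for $\int\hat{u}_\vL\,\dd\theta$, and invoking the non-vanishing of $\mathsf{K}^{|\vL|}(\chi^k,\sigma_\theta)$ established in Theorem~\ref{thm:LinOpKern}. Your treatment is in fact slightly more explicit than the paper's on the boundedness of the inverse and on the c.c.s.\ preservation, but the argument is the same.
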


\begin{proof}
    We denote $f = \left(\LinOp_k\right)^{-1}  g$.
    For the case $\vL \in \mathbb{S}^2_k$, by restriction of the operator, and since we suppose that $\hat{g}_\vL \equiv 0$, this implies that the mode is still zero. For the case $\vL = 0$, we have in Fourier the equation,
    \begin{equation}
        \sigma_\theta\partial_{\thth} \hat{f}_0 =\hat{g}_0,
    \end{equation}
    since $\int g = 0$ this implies that $\int \hat{g}_0 \dd \theta = 0$, and $(\partial_{\thth})^{-1}$ is well-defined from $L^2_{\theta;0}$ to $H^2_{\theta;0}$, and
    \begin{equation}
        \hat{f}_0 =\frac{1}{\sigma_\theta}(\partial_{\thth})^{-1} \hat{g}_0.
    \end{equation}
    Now for $\vL \in \dZ^2 \setminus (\mathcal{C}^k \cup \{0\})$, the equation writes as,
    \begin{equation*}
        \left(-\hat{\mathsf{M}}_\vL + \sigma_\theta\partial_{\thth}\right)\hat{f}_\vL - \frac{\chi^k}{2\pi} \partial_\theta\hat{\mathsf{B}}_\vL \int \hat{f}_\vL \dd \theta= \hat{g}_\vL.
    \end{equation*}
    Using the resolvent operator $\fOpR^\vL_{\sigma_\theta}$, this leads equivalently to
    \begin{equation}
        \label{eq:InverseLkModeEll}
        \hat{f}_\vL - \frac{\chi^k}{2\pi} \fOpR^\vL_{\sigma_\theta}\partial_\theta\fB_\vL \int \hat{f}_\vL \dd \theta = \fOpR^\vL_{\sigma_\theta}\hat{g}_\vL,
    \end{equation}
    and integrating, we obtain 
    \begin{equation*}
        \left(1 - \frac{\chi^k}{2\pi} \int \fOpR^\vL_{\sigma_\theta}\partial_\theta\fB_\vL\dd \theta \right)\int \hat{f}_\vL \dd \theta = \int\fOpR^\vL_{\sigma_\theta}\hat{g}_\vL\dd \theta,
    \end{equation*}

    \begin{equation*}
        \mathsf{K}^\vL(\chi^k(\sigma_\theta), \sigma_\theta)\int \hat{f}_\vL \dd \theta  = \int\fOpR^\vL_{\sigma_\theta}\hat{g}_\vL\dd \theta.
    \end{equation*}
    We used that $\mathsf{K}^\vL(\chi^k(\sigma_\theta), \sigma_\theta) \neq 0$ for $\sigma_\theta$ small by uniform continuity in $\vL$ of the integral. Solving for $\int \hat{f}_\ell \dd \theta$ and plugging in \eqref{eq:InverseLkModeEll}, we obtain
    \begin{equation*}
        \hat{f}_\vL = \fOpR^\vL_{\sigma_\theta}\hat{g}_\vL + \left(\frac{\chi^k}{2\pi \mathsf{K}^\ell(\chi^k, \sigma_\theta)} \int\fOpR^\vL_{\sigma_\theta}\hat{g}_\vL\dd \theta \right)\fU^\vL_{\sigma_\theta}.
    \end{equation*}
\end{proof}

We conclude this section by proving that at the first bifurcation point $k =  1$, the spectrum of the linearized operator around the homogeneous admits only $0$ as its only eigenvalue with positive real part.

\begin{prop}
    \label{prop:SpectralGapatFirstBif}
    Let $k = 1$, then there exists $\sigma^*_\theta > 0$, such that for $0 < \sigma_\theta < \sigma^*_\theta$ and $\chi^1 > 0$ as given by Theorem~\ref{thm:LinOpKern}, and there exists $c>0$, such that, 
    \begin{equation*}
        \Sigma (\LinOp_1) = \{ 0 \} \cup \Sigma_1,  \;\; \text{ with } \Sigma_1 \subset \{\mu \in \dC, \Real \mu \leq - c\}. 
    \end{equation*}
\end{prop}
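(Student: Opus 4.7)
The plan is to exploit $x$-translation invariance to reduce $\Sigma(\LinOp_1)$ to a family of one-dimensional spectral problems indexed by $\vK\in\dZ^2$, and then transfer the explicit $\sigma_\theta=0$ information to small $\sigma_\theta>0$ by perturbation. Since $\sigma_x,\sigma_\theta>0$, the full operator is uniformly elliptic on $\T^2_1\times\T_{2\pi}$ with compact resolvent, so the spectrum is purely discrete and
\[
\Sigma(\LinOp_1)=\bigcup_{\vK\in\dZ^2}\Sigma(\LinOpFk^\vK_{\chi^1}).
\]
For $\vK=0$, the interaction term vanishes and $\LinOpFk^0_{\chi^1}=\sigma_\theta\partial_{\theta\theta}$ on $L^2_{\theta;0}(\dC)$, contributing only the eigenvalues $\{-\sigma_\theta n^2:n\geq 1\}\subset\{\Real\mu\leq-\sigma_\theta\}$. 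For $\vK\neq 0$, any eigenpair $(\mu,\hat\phi)$ either has $\int\hat\phi\,d\theta=0$, in which case the rank-one term disappears and the energy identity $\Real\langle u,(-\fM_\vK+\sigma_\theta\partial_{\theta\theta})u\rangle\leq-4\pi^2\sigma_x|\vK|^2\|u\|^2$ forces $\Real\mu\leq-4\pi^2\sigma_x$, or $\mu$ is a root of the dispersion function
\[
G_\vK(\mu,\sigma_\theta):=1-\frac{\chi^1}{2\pi}\int\bigl(-\fM_\vK-\mu+\sigma_\theta\partial_{\theta\theta}\bigr)^{-1}\partial_\theta\fB_\vK\,d\theta,
\]
which at $\mu=0$ reduces to the characteristic equation already studied in Theorem~\ref{thm:LinOpKern}.

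I would then analyze the roots of $G_\vK$ in $\{\Real\mu\geq-c\}$ in three regimes. For $|\vK|\geq N$ large (depending only on the parameters), the resolvent bound $\|(-\fM_\vK-\mu+\sigma_\theta\partial_{\theta\theta})^{-1}\|\lesssim|\vK|^{-2}$, uniform on $\Real\mu\geq-c$, gives $G_\vK\to 1$ and excludes roots. For bounded $2\leq|\vK|\leq N$, the strict monotonicity of $k\mapsto\int\fU^{\vK_1}_0\,d\theta$ established in the proof of Theorem~\ref{thm:LinOpKern}, combined with the uniform continuity estimate~\eqref{est:ResolventContinuity}, bounds $|G_\vK(0,\sigma_\theta)|$ away from zero for $\sigma_\theta$ small; extending to $\mu\neq 0$ on a fixed compact neighborhood of the imaginary axis by continuity rules out roots there. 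For $|\vK|=1$, the zero at $\mu=0$ is prescribed by the definition of $\chi^1$; at $\sigma_\theta=0$ the integrand of $G_{\vK_1}(\cdot,0)$ is rational in $\xonepik=e^{i\theta}$, and a contour computation in the spirit of Appendix~\ref{sec:AppendixIntegrals} produces an explicit algebraic expression whose only root in $\{\Real\mu\geq 0\}$ is $\mu=0$, and which I would verify is simple there. Analytic perturbation in $\sigma_\theta$, controlled through Proposition~\ref{prop:ResolventProperties}, then transports this picture to small $\sigma_\theta>0$, and the spectral gap $c$ is taken as the minimum over the finitely many surviving modes $|\vK|\leq N$.

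The hard part is the $|\vK|=1$ regime: one must rule out that additional roots of $G_{\vK_1}(\cdot,\sigma_\theta)=0$ migrate across the imaginary axis as $\sigma_\theta$ turns on, and verify that the simple root at $\mu=0$ does not split sideways into $\{\Real\mu>0\}$. A Rouch\'e count on a suitably chosen contour around $\mu=0$, combined with the sectoriality of $\LinOpFk^{\vK_1}_{\chi^1}$ capping the number of eigenvalues in any vertical strip, should suffice to control the bookkeeping. Note that the restriction $k=1$ is genuinely needed here: for $k\geq 2$ the modes $|\vK|<k$ already carry eigenvalues with $\Real\mu>0$ at $\chi^k$ by the monotonicity just invoked, so a spectral gap around $0$ cannot hold at higher bifurcation points.
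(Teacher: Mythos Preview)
Your outline is essentially the paper's proof: Fourier decomposition $\Sigma(\LinOp_1)=\bigcup_\vK\Sigma(\LinOpFk^\vK_{\chi^1})$, reduction to the scalar dispersion relation for $\vK\neq 0$, explicit analysis at $\sigma_\theta=0$, and a resolvent perturbation to small $\sigma_\theta>0$. Two points of execution differ and are worth noting. First, the paper avoids Rouch\'e and root-tracking entirely by working only with the \emph{real part} $\mathcal{J}^{|\vK|}_{\sigma_\theta}(\mu)=\Real\int(-\mu-\fM_\vK+\sigma_\theta\partial_{\thth})^{-1}\partial_\theta\fB_\vK\,d\theta$: since the dispersion relation equates this integral to the real number $2\pi/\chi^1$, it suffices to show $\mathcal{J}^1_{\sigma_\theta}(0)>\mathcal{J}^{|\vK|}_{\sigma_\theta}(\mu)$ for all $(\vK,\mu)\neq(\vK_1,0)$ with $\Real\mu\geq 0$, which is a one-sided inequality rather than a zero-counting problem. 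Second, the paper does not split $|\vK|\geq 2$ into large and bounded regimes; instead it writes down the explicit inviscid formula for $\mathcal{J}^k_0(\mu)$ (your ``contour computation'') and reads off directly that $\sup_{k\geq 2}\sup_{\Real\mu\geq 0}\mathcal{J}^k_0(\mu)\leq M<\mathcal{J}^1_0(0)$, handling all higher modes and all $\mu$ in the closed half-plane at once. This also patches the small gap in your sketch: for $2\leq|\vK|\leq N$ you only treat a ``fixed compact neighborhood of the imaginary axis'' and never return to $|\Imag\mu|\to\infty$; the paper's global bound on $\mathcal{J}^k_0$ covers this automatically, whereas in your framing you would need to invoke the same sectorial resolvent decay you already use for $|\vK|=1$.
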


\begin{proof}
    We use the decomposition $\Sigma(\LinOp_1) = \cup_{\vK \in \dZ^2} \Sigma(\LinOpFk^{\vK}_1)$, where $\LinOpFk^{\vK}_1$ is the restriction in positional Fourier of the operator. The eigenvalues of $\LinOpFk^0_1 = \sigma_\theta \partial_{\thth}$ over $L^2_{\theta;0}(\dC)$ are explicitly given by $\Sigma(\LinOpFk^0_1) =\left\{ - \sigma_\theta n^2, n \in \dN^*\right\}$. Then, for an eigenvalue $\mu$ of $\LinOpFk^{\vK}_1$ with $\vK \in \dZ^2 \setminus\{0\}$, we have the following equation,
    \begin{equation}
        \label{eq:eigenvalueProblemL1}
        \left(\LinOpFk^\vK_1 - \mu \right)\hat{\phi}_\vK  = \left(-\mu -\hat{\mathsf{M}}_\vK + \sigma_\theta \partial_{\thth}\right) \hat{\phi}_\vK - \frac{\chi^1}{2\pi} \partial_\theta\fB_\vK \int \hat{\phi}_\vK \dd \theta = 0.
    \end{equation}
    A similar result as in Proposition~\ref{prop:ResolventProperties} (see  Lemma 5.2 in~\cite{rakodeWit2025}) gives that, for $\mu \in \dC$ with $\Real(\mu) > -\sigk$, the operator $\left(-\mu -\hat{\mathsf{M}}_\vK + \sigma_\theta \partial_{\thth}\right)$ admits a bounded inverse. The eigenvalue problem for an eigenvalue $\mu$ is then equivalent to
    \begin{equation}
        \label{eq:eigenvalueProblemL1Equivalence}
        \frac{\chi^1}{2\pi} \int \left(-\mu -\hat{\mathsf{M}}_\vK + \sigma_\theta \partial_{\thth}\right)^{-1} \partial_\theta\fB_\vK\dd \theta = 1.
    \end{equation}
    We denote $\mathsf{U}^{\vK}_{\sigma_\theta}(\mu,\theta) = \left(-\mu -\hat{\mathsf{M}}_\vK + \sigma_\theta \partial_{\thth}\right)^{-1} \partial_\theta\fB_\vK$ and
    $\mathcal{J}^k_{\sigma_\theta}(\mu) = \Real \int \mathsf{U}^{\vK_1}_{\sigma_\theta}(\mu, \theta) \dd \theta$. From Proposition~\ref{prop:RotationUKukomega}, we have that $\int \mathsf{U}^{\vK}_{\sigma_\theta}(\mu, \theta) \dd \theta = \int \mathsf{U}^{\vL}_{\sigma_\theta}(\mu, \theta) \dd \theta$ if $|\vL| = |\vK|$. It is thus sufficient to prove that, for $\sigma_\theta$ sufficiently small, the following hold,
    \begin{subequations}\label{eq:j1}
    \begin{align}
        \mathcal{J}^1_{\sigma_\theta}(0) &> \mathcal{J}^1_{\sigma_\theta}\left(re^{i\phi}\right),\\
        \mathcal{J}^1_{\sigma_\theta}(0) &> \mathcal{J}^k_{\sigma_\theta}(\mu),
    \end{align}
    \end{subequations}
    for $r\in (0,+\infty)$, $\phi \in \left[-\tfrac{\pi}{2},  \tfrac{\pi}{2}\right]$,  $k \geq 2$ and $\mu \in \dC$ with $\Real \mu \geq 0$. 
    
    If these hold, we have the identity (by definition of $\chi^1$),
    \begin{equation*}
        \int \mathsf{U}^{\vK}_{\sigma_\theta}(\mu, \theta) \dd \theta = \frac{2\pi}{\chi^1},
    \end{equation*}
    if and only if $\mu = 0$ and $|\vK| = 1$, and using the equivalence between the integral equation~\eqref{eq:eigenvalueProblemL1Equivalence} and the eigenvalue problem~\eqref{eq:eigenvalueProblemL1}, $0$ is the only eigenvalue of $\LinOp_1$ with non-negative real part. The spectral gap condition,
    \begin{equation*}
        \Sigma (\LinOp_1) \setminus\{0\} \subset \{\mu \in \dC, \Real \mu \leq - c\},
    \end{equation*}
    follows from the fact that $\LinOp_1$ has a compact inverse and thus a discrete spectrum without accumulation point in any compact subset of $\dC$.

    We now prove \eqref{eq:j1}. We use the information on the explicit inviscid integral associated with $\mathsf{U}^{\vK}_{0}(\mu, \cdot ) = \partial_\theta\fB_\vK / (-\mu -\hat{\mathsf{M}}_\vK )$, given by,
    
    \begin{equation}
        \label{eq:MuIntegralEigenval}
        \mathcal{J}^k_{0}(\mu) =  \frac{4\pi^2 k \Aek(\mu)}{\cEll \lmbk} (1-|\Azink(\mu)| + 2(\sigk + \mu/\lmbk)|\Azink(\mu)| \tauk),
    \end{equation}
    with
    \begin{align*}
        \Azink(\mu) & \defeq -(2(\sigk+\mu/\lmbk)^2+1) + \sqrt{(2(\sigk+\mu/\lmbk)^2+1)^2-1},\\
        \Aek &\defeq \left(\sqrt{(2(\sigk+\mu/\lmbk)^2+1)^2-1}\right)^{-1}. 
    \end{align*}
    From this explicit form, we obtain that there exists $\delta >0$, and $d>0$, such that
    \begin{align*}
        &\sup_{r\in (0,\delta)} \sup_{\varphi \in \left[-\tfrac{\pi}{2}, \tfrac{\pi}{2}\right]} \frac{\dd}{\dd r} \mathcal{J}^1_0(r e^{i\varphi} ) \leq - d,\\
        &M \defeq \max_{\varphi \in \left[-\tfrac{\pi}{2}, \tfrac{\pi}{2}\right]} \mathcal{J}^1_0(\delta e^{i\varphi}) = \sup_{R \in (\delta, + \infty)} \sup_{\varphi \in \left[-\tfrac{\pi}{2}, \tfrac{\pi}{2}\right]} \mathcal{J}^1_0(R e^{i\varphi}) \geq  \sup_{k \geq 2}\sup_{\Real \mu \geq 0} \mathcal{J}^k_0(\mu).
    \end{align*}
    We then apply a similar $L^2$-energy estimate as in Lemma 5.2 in~\cite{rakodeWit2025}. Consider the following equations,
    \vspace{-1em}
    \begin{align*}
        \left(-\mu -\hat{\mathsf{M}}_\vK + \sigma_\theta \partial_{\thth}\right) \mathsf{U}^{\vK}_{\sigma_\theta}(\mu)  &= \partial_\theta\fB_\vK,\\
        \left(-\mu -\hat{\mathsf{M}}_\vK + \sigma_\theta \partial_{\thth}\right) \mathsf{U}^{\vK}_{0}(\mu) - \sigma_\theta \partial_{\thth} \mathsf{U}^{\vK}_{0}(\mu)  &= \partial_\theta\fB_\vK.
        \end{align*}
    As $\mathsf{U}^{\vK}_{0}(\mu)$ is regular and explicitly given by $\mathsf{U}^{\vK}_{0}(\mu) = \partial_\theta\fB_\vK / (-\mu -\hat{\mathsf{M}}_\vK )$, we obtain that,
    \begin{equation*}
        \left\| \mathsf{U}^{\vK}_{0}(\mu) - \mathsf{U}^{\vK}_{\sigma_\theta}(\mu)\right\|_{L^2} \leq \frac{\sigma_\theta}{ \Real \mu + 4\pi^2 |\vK|^2 \sigma_x }\left\| \partial_{\theta\theta} \left(\mathsf{U}^{\vK}_{0}(\mu) \right) \right\|_{L^2}.
    \end{equation*}
    Using the explicit form of $\mathsf{U}^{\vK}_{0}(\mu)$, we derive the following uniform estimate,
    \begin{equation}
        \label{est:Uunifmuk}
       \sup_{\vK \in \dZ^2\setminus\{0\}} \sup_{\mu \in \dC_{+}} \left\| \mathsf{U}^{\vK}_{0}(\mu) - \mathsf{U}^{\vK}_{\sigma_\theta}(\mu)\right\|_{L^2} \leq C \sigma_\theta,
    \end{equation}
    for some positive constant $C>0$. Similarly, we obtain an estimate on the derivative in $\mu$, thus for $\sigma_\theta^*$ sufficiently small, we can ensure that,
    \begin{align*}
       \sup_{k \geq 2} \sup_{\mu \in \dC_+} \left|\mathcal{J}^k_{\sigma_\theta}(\mu) - \mathcal{J}^k_{0}(\mu) \right| &\leq \frac{\mathcal{J}^1_{0}(0) - M}{2},\\
        \sup_{ r\in [\delta,+\infty)} \sup_{\varphi \in \left[-\tfrac{\pi}{2}, \tfrac{\pi}{2}\right]} \left|\mathcal{J}^1_{\sigma_\theta}(re^{i\varphi}) - \tfrac{\dd}{\dd \mu}\mathcal{J}^1_{0}(re^{i\varphi})\right| &\leq \frac{\mathcal{J}^1_{0}(0) - M}{2},\\
        \sup_{ r\in [0,\delta)}\sup_{\varphi \in \left[-\tfrac{\pi}{2}, \tfrac{\pi}{2}\right]}   \left|\tfrac{\dd}{\dd r}\mathcal{J}^1_{\sigma_\theta}(re^{i\varphi}) - \tfrac{\dd}{\dd r}\mathcal{J}^1_{0}(re^{i\varphi}) \right| &\leq  \frac{d}{2},
    \end{align*}
    for all $0<\sigma_\theta < \sigma^*_\theta$. These three estimates ensure that $\mathcal{J}^1_{\sigma_\theta}(0)$ stays the strict maximum of $\sup_{k \geq 1} \sup_{\mu \in \dC_+} \mathcal{J}^k_{\sigma_\theta}(\mu)$. The conclusion follows with the equivalence between the eigenproblem and the integral condition~\eqref{eq:eigenvalueProblemL1Equivalence}.
\end{proof}

    \section{Analysis of Bifurcation Curves}
\label{sec:BirfucationCurves}
In this section, we develop the Lyapunov--Schmidt (LS) reduction method for our functional equation \eqref{eq:fullFstat}. To overcome the multidimensionality of the reduced LS equation (the kernel of the linearized operator is a four--dimensional space, see Theorem~\ref{thm:LinOpKern}), we first remove the translation symmetry by fixing an appropriate functional framework. Then, by choosing a suitable basis representation of the LS function and identifying it as a parametric vector field $\mathbb{R}^2 \times \mathbb{R} \to \mathbb{R}^2$, we show that the axes and diagonal directions are field lines (see Figure~\ref{fig:PhiXiLambda}). Restricting to these field lines reduces the problem to one dimension, allowing the application of Newton's polygon method. Solutions along the diagonal field lines correspond to spot solutions, while solutions along the axes correspond to lane solutions.


\begin{figure}[!ht]
    \centering
    \includegraphics[width=0.5\linewidth]{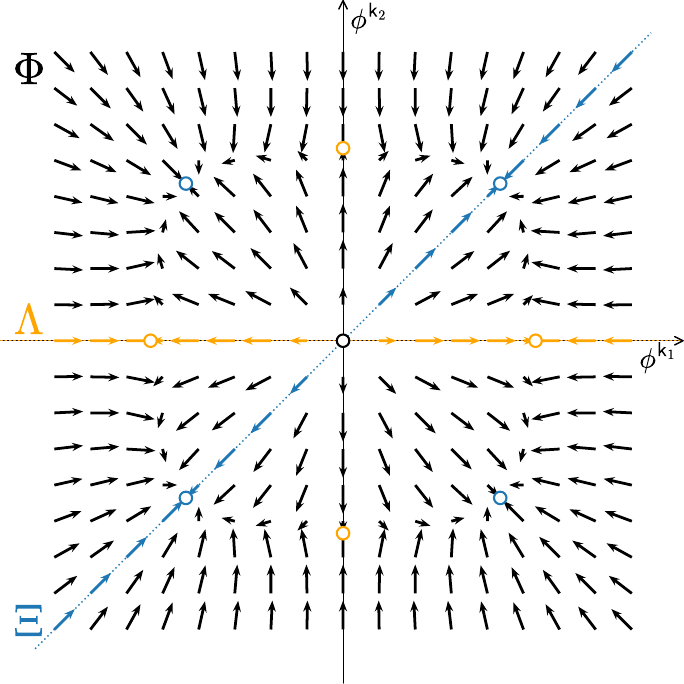}
    \caption{Representation of the Lyapunov-Schmidt functional $\bPhi$ as a vector field, with field lines $\Xi$ and $\Lambda$.}
    \label{fig:PhiXiLambda}
\end{figure}

\subsection{Lyapunov--Schmidt Reduction}



\newcommand{\phisig}{\phi_{\sigma_{\theta}}}
\newcommand{\psisig}{\psi_{\sigma_{\theta}}}

We first start by defining the quotient spaces for the functional framework. The Lyapunov-Schmidt reduction is performed in the following functional subspaces,
\begin{equation*}
    X = H^2_0 \cap \left\{ \antipodreflOp f = f \right\}, \; Z = L^2_0 \cap \left\{ \antipodreflOp f = f \right\},
\end{equation*}
equipped with their Hilbertian structure. The space $X$ is continuously embedded in $Z$ by the Sobolev embedding theorem. 
We note that $\F$ and $D_f\F$ are well defined in these quotient spaces. That is, we have
\begin{equation*}
   \F \in C(X\times \dR, Z),\; D_f \F \in C(X\times \dR, L(X,Z)).
\end{equation*}
This follows from Proposition~\ref{antipodalReflectionCommutations} as, $\antipodreflOp (D_f\F(f,\chi)[\phi])=D_f\F(f,\chi)[\antipodreflOp \phi]$, given that $f,\phi\in X$. Furthermore, we note that in this functional framework, $D_f\F(0,\chi)|_X = \LinOp_\chi|_X : X \to Z$. The kernel and cokernel, at a non-Pythagorian wave number $k$, are given as the intersection with the symmetric property $\{\antipodreflOp f = f\}$, thus removing the $\bis$-eigenfunctions,
\begin{align*}
    \ker (\LinOp_k|_X) = \Span \{ \phi^{\vK_1}, \phi^{\vK_2}\},\\
    \coker (\LinOp_k|_X) = \Span \{ \psi^{\vK_1}, \psi^{\vK_2}\}.
\end{align*}

For the sake of notational conciseness, we drop the restriction on $X$, and just denote $\LinOp_k$, the operator defined by restriction with the symmetry, $\LinOp_k: X\to Z$, at a bifurcation point $\chi^k$.

We now prove that $0$ is semi--simple. This gives a space decomposition for the LS reduction that is relevant for the stability analysis, as in our case, the Principle of Reduced Stability applies. We first recall the definition of a semi--simple eigenvalue.

\begin{definition}
A complex number $\mu$ is said to be a semi-simple eigenvalue of an operator $A$  of multiplicity $n \geq 1$ if  $\dim \ker (A-\mu Id) = n $ and $\ker A \cap \Range A = \{ 0 \}$. 
\end{definition}
\begin{prop}
    \label{prop:SpaceDecompositionLS}
    The following identity holds,
    \begin{equation}
        \ker \LinOp_k \cap \Range \LinOp_k = \{0\}.
    \end{equation}
    Furthermore, as $\LinOp_k$ has a zero index, this leads to the functional decompositions,
    \begin{align}
        \label{SpaceDecomposition}
        X & = \ker (\LinOp_k) \oplus (\Range (\LinOp_k )\cap X),\\
        Z & = (\ker \LinOp_k) \oplus \Range (\LinOp^k ).
    \end{align}
    Finally, we define the projection operator $Q : Z \to \ker  (\LinOp_k) $ along $\Range  (\LinOp_k) $, explicitly given by,
    \begin{equation}
        Q  = (\langle \cdot , \psi^{\vK_1}\rangle \phi^{\vK_1} + \langle \cdot , \psi^{\vK_2}\rangle \phi^{\vK_2})\Normpsiphi
    \end{equation}
    where, $\Normpsiphi = 1/\langle \phi^{\vK_1} , \psi^{\vK_1}\rangle$.
\end{prop}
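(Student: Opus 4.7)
The plan is to reduce the intersection condition to the non-vanishing of a single scalar, which can then be computed explicitly using the inviscid formulas from the proof of Theorem~\ref{thm:LinOpKern}.

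By Theorem~\ref{thm:LinOpKern}, $\LinOp_k$ is Fredholm of index zero, and the restriction to the antipodal-even subspace $X$ inherits this property via Proposition~\ref{antipodalReflectionCommutations}. The Fredholm alternative gives $\Range \LinOp_k = (\coker \LinOp_k)^\perp$, so writing a generic $\phi \in \ker \LinOp_k$ as $\phi = c_1 \phi^{\vK_1}_{\sigma_\theta} + c_2 \phi^{\vK_2}_{\sigma_\theta}$, the condition $\phi \in \Range \LinOp_k$ reduces to $M(c_1, c_2)^T = 0$, where $M_{ij} = \langle \phi^{\vK_i}_{\sigma_\theta}, \psi^{\vK_j}_{\sigma_\theta}\rangle$. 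It therefore suffices to show that the $2\times 2$ pairing matrix $M$ is invertible.

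The matrix $M$ simplifies drastically. By Parseval together with the disjoint positional Fourier supports of $\phi^{\vK_i}_{\sigma_\theta}$ (on $\pm \vK_i$) and $\psi^{\vK_j}_{\sigma_\theta}$ (on $\pm \vK_j$, with $\vK_1 \neq \pm \vK_2$), the off-diagonal entries $M_{12}$ and $M_{21}$ vanish. The swap identity from Proposition~\ref{prop:swapRelation} combined with the self-adjoint-involution property of $\swapOp$ (Proposition~\ref{prop:SwapOperator}) gives $M_{22} = \langle \swapOp \phi^{\vK_1}_{\sigma_\theta}, \swapOp \psi^{\vK_1}_{\sigma_\theta}\rangle = \langle \phi^{\vK_1}_{\sigma_\theta}, \psi^{\vK_1}_{\sigma_\theta}\rangle = M_{11}$. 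Hence $M = M_{11}\,\Id$ and the problem reduces to $M_{11} \neq 0$.

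The computational heart of the argument is to verify $M_{11} \neq 0$ via an inviscid limit. Parseval together with the conjugation identities $(\fU^{\vK_1}_{\sigma_\theta})^* = \fU^{-\vK_1}_{\sigma_\theta}$ and $(\fV^{\vK_1}_{\sigma_\theta})^* = \fV^{-\vK_1}_{\sigma_\theta}$ yields $M_{11} = 2\Real \int \fU^{\vK_1}_{\sigma_\theta} \fV^{-\vK_1}_{\sigma_\theta} \dd\theta$. At $\sigma_\theta = 0$, the explicit forms $\fU^{\vK_1}_0 = -\partial_\theta \fB_{\vK_1}/\fM_{\vK_1}$ and $\fV^{-\vK_1}_0 = 1/\fM_{\vK_1}$ reduce this to
\[ M_{11}\big|_{\sigma_\theta = 0} = -2\Real \int \frac{\partial_\theta \fB_{\vK_1}}{\fM_{\vK_1}^2} \dd\theta. \]
The key observation is that, since $\fM_{\vK_1} = \lmbk(\sigk + i\cos\theta)$, one has $1/\fM_{\vK_1}^2 = -\lmbk^{-1}\partial_{\sigk}(1/\fM_{\vK_1})$. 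Exchanging derivative and integral transforms the quantity above into $-(2/\lmbk)\,\partial_{\sigk} \int \fU^{\vK_1}_0 \dd\theta$, which can be evaluated directly from the closed-form expression for $\int \fU^{\vK_1}_0 \dd\theta$ derived in the proof of Theorem~\ref{thm:LinOpKern}; a short monotonicity check shows that the $\sigk$-derivative is strictly negative for all $\tau \geq 0$, yielding $M_{11}|_{\sigma_\theta=0} > 0$. The resolvent continuity in Proposition~\ref{prop:ResolventProperties} extends $M_{11} \neq 0$ to all sufficiently small $\sigma_\theta$, possibly shrinking $\sigma^k_\theta$ if necessary.

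Granted $\ker \LinOp_k \cap \Range \LinOp_k = \{0\}$ and the index-zero Fredholm property, the decompositions of $X$ and $Z$ follow by a standard algebraic argument. For the projection formula, I would define $Q$ by the stated formula with $\Normpsiphi = 1/M_{11}$ and verify directly from $\langle \phi^{\vK_i}_{\sigma_\theta}, \psi^{\vK_j}_{\sigma_\theta}\rangle = M_{11}\delta_{ij}$ that $Q$ restricts to the identity on $\ker \LinOp_k$, while $(\Id - Q)f$ is orthogonal to $\psi^{\vK_1}_{\sigma_\theta}$ and $\psi^{\vK_2}_{\sigma_\theta}$ and therefore lies in $\Range \LinOp_k = (\coker \LinOp_k)^\perp$. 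The main obstacle is the non-vanishing computation in the third paragraph, but the differentiation identity $1/\fM_{\vK_1}^2 = -\lmbk^{-1}\partial_{\sigk}(1/\fM_{\vK_1})$ reduces it to a monotonicity check on a known one-variable function.
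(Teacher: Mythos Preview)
Your structural reduction is identical to the paper's: write a kernel element in the basis $\{\phi^{\vK_1},\phi^{\vK_2}\}$, pair against the cokernel basis $\{\psi^{\vK_1},\psi^{\vK_2}\}$, note that the off-diagonal pairings vanish by disjoint Fourier supports and the diagonal ones agree by the swap symmetry, and then invoke continuity in $\sigma_\theta$ to reduce everything to the non-vanishing of $\langle\phi^{\vK_1}_0,\psi^{\vK_1}_0\rangle$.

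Where you genuinely diverge is in the computation of that scalar. The paper computes $\langle\phi^{\vK_1}_0,\psi^{\vK_1}_0\rangle$ directly via a residue/Cauchy-product calculation (Lemma~\ref{lem:phipsiIntegral} in Appendix~\ref{sec:AppendixIntegrals}), obtaining an explicit polynomial in $\tauk$ whose coefficients are visibly positive. Your differentiation identity $1/\fM_{\vK_1}^2=-\lmbk^{-1}\partial_{\sigk}(1/\fM_{\vK_1})$ instead rewrites the pairing as $-(2/\lmbk)\,\partial_{\sigk}\!\int\fU^{\vK_1}_0\,\dd\theta$, recycling the closed form already obtained in the proof of Theorem~\ref{thm:LinOpKern}. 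This is a cleaner route conceptually and avoids a second residue computation; the trade-off is that the ``short monotonicity check'' you defer is not entirely trivial (one must verify that both $\Aek(1-|\Azink|)$ and $2\sigk\Aek|\Azink|$ are strictly decreasing in $\sigk$, which requires a page of elementary but careful algebra). The paper's appendix computation is longer but yields the explicit value of $\langle\phi^{\vK_1},\psi^{\vK_1}\rangle$, which is reused elsewhere as the normalizing constant $\Normpsiphi$. Both arguments are correct.
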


In order to prove the result, we need the following Lemma. The computation is given in  Appendix~\ref{sec:AppendixIntegrals}.

\begin{lem}
We have the following identity, $\langle \psi^{\vK_1}, \phi^{\vK_1}\rangle  = \langle \psi^{\vK_2}, \phi^{\vK_2}\rangle$, and the explicit form is given by,
\begin{equation}
    \label{eq:Phi1Psi1Integral}
    \langle \psi^{\vK_1}, \phi^{\vK_1}\rangle  = \frac{8\pi^2 k \Aek^2 }{\cEll\lmbk^2}\left(p^k_1\tauk + p^k_0\right),
\end{equation}
where,
\begin{align}
    p^k_1 & =  1+|\Azink|^2 - \frac{4|\Azink|}{1+|\Azink|} + \frac{8\sigk^2|\Azink|}{1-|\Azink|^2} + \underset{\sigma_\theta \to 0}{o}(1),\\
    p^k_0 & = 4\sigk \frac{1-|\Azink|}{1+|\Azink|} + \underset{ \sigma_\theta \to 0}{o}(1).
\end{align}

We conclude that for any $\sigk>0$, there exists $\sigma_\theta^*>0$ sufficiently small such that for every $0\leq \sigma_\theta < \sigma_\theta^*$,
\begin{equation*}
    \langle \psi^{\vK_1}, \phi^{\vK_1}\rangle  > 0.
\end{equation*}
\end{lem}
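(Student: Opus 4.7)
The proof proceeds in three stages.

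\textbf{First}, the identity $\langle \psi^{\vK_1}_{\sigma_\theta}, \phi^{\vK_1}_{\sigma_\theta}\rangle = \langle \psi^{\vK_2}_{\sigma_\theta}, \phi^{\vK_2}_{\sigma_\theta}\rangle$ follows from the swap symmetry: Proposition~\ref{prop:swapRelation} sends the $\vK_1$-basis functions to the $\vK_2$-basis functions under $\swapOp$, while Proposition~\ref{prop:SwapOperator} says $\swapOp$ is a self-adjoint involution on $L^2_0$. Hence
\[
\langle \psi^{\vK_2}_{\sigma_\theta}, \phi^{\vK_2}_{\sigma_\theta}\rangle = \langle \swapOp \psi^{\vK_1}_{\sigma_\theta}, \swapOp \phi^{\vK_1}_{\sigma_\theta}\rangle = \langle \psi^{\vK_1}_{\sigma_\theta}, \swapOp^2 \phi^{\vK_1}_{\sigma_\theta}\rangle = \langle \psi^{\vK_1}_{\sigma_\theta}, \phi^{\vK_1}_{\sigma_\theta}\rangle.
\]

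\textbf{Second}, I reduce the pairing to a one-dimensional integral. Since $\phi^{\vK_1}$ and $\psi^{\vK_1}$ are supported on the positional modes $\pm\vK_1$ only, Parseval and the conjugation identities $(\fV^{\vK_1}_{\sigma_\theta})^* = \fV^{-\vK_1}_{\sigma_\theta}$, $(\fU^{\vK_1}_{\sigma_\theta})^* = \fU^{-\vK_1}_{\sigma_\theta}$ collapse the sum to
\[
\langle \psi^{\vK_1}_{\sigma_\theta}, \phi^{\vK_1}_{\sigma_\theta}\rangle = 2\Real \int_0^{2\pi} \fV^{-\vK_1}_{\sigma_\theta}(\theta)\, \fU^{\vK_1}_{\sigma_\theta}(\theta)\, \dd\theta.
\]
The resolvent continuity~\eqref{est:ResolventContinuity} justifies passing to the $\sigma_\theta = 0$ value of this integral up to an $o(1)$ error as $\sigma_\theta \to 0^+$, exactly matching the form of the expansions of $p^k_0$ and $p^k_1$. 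At $\sigma_\theta = 0$, the explicit representations $\fU^{\vK_1}_{0} = -\partial_\theta \fB_{\vK_1}/\fM_{\vK_1}$ and $\fV^{-\vK_1}_{0} = 1/\fM_{\vK_1}$ together with Table~\ref{tb:FourierMultipliers} reduce the problem to computing
\[
\frac{4\pi k}{\cEll \lmbk^2}\, \Real \int_0^{2\pi} \frac{i\cos\theta - \tauk \cos 2\theta}{(\sigk + i\cos\theta)^2}\, \dd\theta.
\]

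\textbf{Third}, the main technical step is the residue evaluation. Substituting $z = e^{i\theta}$, the denominator factorizes as $(\sigk + i\cos\theta)^2 = -(z-z_+)^2(z-z_-)^2/(4z^2)$ with $z_\pm = i(\sigk \pm \sqrt{\sigk^2+1})$, the roots of $z^2 - 2i\sigk z + 1$. The crucial algebraic identifications are $z_-^2 = \Azink$ (with $z_-$ inside the unit disk) and $(z_+ - z_-)^{-1}$ proportional to $\Aek$; these are precisely what align the final answer with the constants introduced in Section~\ref{sec:fourdecomppos}. The integrand carries a double pole at $z_-$ for both the $\cos\theta$ and the $\cos 2\theta$ pieces, and an additional simple pole at $z=0$ for the $\cos 2\theta$ piece. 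Differentiating the rational function at the double pole, taking the real part, and regrouping the result using the identities $(1-|\Azink|)/(1+|\Azink|) = \sigk/\sqrt{\sigk^2+1}$ and $1-|\Azink|^2 = 4\sigk\sqrt{\sigk^2+1}\,|\Azink|$ produces the closed forms $p^k_0$ and $p^k_1$. This bookkeeping, particularly the derivative-residue and the matching with $\Azink$, $\Aek$, is the main obstacle and is carried out in Appendix~\ref{sec:AppendixIntegrals}.

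The strict positivity is then extracted from the explicit leading order. The identity $(1-|\Azink|)/(1+|\Azink|) = \sigk/\sqrt{\sigk^2+1}$ gives immediately $p^k_0 = 4\sigk^2/\sqrt{\sigk^2+1} > 0$, and a short manipulation using $1+|\Azink|^2 = 2|\Azink|(2\sigk^2+1)$ reduces $p^k_1$ to a manifestly non-negative expression for every $\sigk > 0$. Therefore $p^k_0 + \tauk p^k_1 > 0$ strictly at $\sigma_\theta = 0$, and the continuity of the full integral in $\sigma_\theta$ (again by~\eqref{est:ResolventContinuity}) propagates the strict inequality to $0 \leq \sigma_\theta < \sigma_\theta^*$ for some $\sigma_\theta^* > 0$, completing the proof.
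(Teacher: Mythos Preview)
Your proof is correct and the overall architecture matches the paper's: swap symmetry for the $\vK_1/\vK_2$ identity, Parseval to collapse to a one-dimensional integral in $\theta$, passage to $\sigma_\theta=0$ via the resolvent estimate, and then positivity by continuity from the explicit leading order.

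The one genuine methodological difference is in how the limiting integral is evaluated. You keep the complex denominator $(\sigk+i\cos\theta)^2$ and compute a direct residue at the double pole $z_-$ inside the unit disk, identifying $z_-^2=\Azink$. The paper instead first symmetrizes the integrand over the real denominator $|\fM_{\vK_1}|^4$, writing
\[
\langle \psi^{\vK_1}_0,\phi^{\vK_1}_0\rangle = -\int_0^{2\pi}\frac{\partial_\theta\fB_{\vK_1}(\fM_{-\vK_1})^2+\partial_\theta\fB_{-\vK_1}(\fM_{\vK_1})^2}{|\fM_{\vK_1}|^4}\,\dd\theta,
\]
and then evaluates this by a Cauchy product in the $\theta$-Fourier variable: the numerator is a trigonometric polynomial with only modes $\{0,\pm2,\pm4\}$, and the Fourier coefficients $\mathsf{d}^{\vK_1,4}_n$ of $|\fM_{\vK_1}|^{-4}$ are computed once and for all (via residues at the simple/double poles $\pm i\sqrt{|\Azink|}$) in Lemma~\ref{lem:MhatmdFourier}. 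Your route is more self-contained and arguably lighter for this single integral; the paper's route pays off because the same $\mathsf{d}^{\vK_1,2}_n$, $\mathsf{d}^{\vK_1,4}_n$ coefficients are reused verbatim in the computation of the Lyapunov--Schmidt integrals $\Aint^{\vK_1},\Aint^{\vK_2}$ in Lemma~\ref{lem:xyfourier}, so the Cauchy-product machinery is amortized. One small caveat: when you write that the bookkeeping ``is carried out in Appendix~\ref{sec:AppendixIntegrals}'', be aware that the appendix actually implements the Cauchy-product method, not your direct double-pole residue, so if you keep your approach you would need to supply that computation yourself rather than cite the appendix as written.
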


\begin{proof}[Proof of Proposition~\ref{prop:SpaceDecompositionLS}]
    Let $u \in \ker (\LinOp_k) \cap (\Range (\LinOp_k )\cap X)$,
    \begin{equation*}
        u = \alpha_1 \phi^{\vK_1} + \alpha_2 \phi^{\vK_2} = \LinOp_k w,
    \end{equation*}
    for some $w \in X$. Taking the scalar product with the basis of the cokernel, that is, the orthogonal space of the image, we obtain,
    \begin{equation}
        \begin{pmatrix}
            \langle \psi^{\vK_1}, \phi^{\vK_1}\rangle & \langle \psi^{\vK_1}, \phi^{\vK_2} \rangle\\
            \langle \psi^{\vK_2} , \phi^{\vK_1} \rangle & \langle \psi^{\vK_2}, \phi^{\vK_2} \rangle
        \end{pmatrix} \begin{pmatrix}
            \alpha_1 \\
            \alpha_2
        \end{pmatrix} =\begin{pmatrix}
            0 \\
            0
        \end{pmatrix}.
    \end{equation}
    Then, using the $x$-Fourier-mode decomposition, we obtain $\langle \psi^{\vK_1}, \phi^{\vK_2} \rangle = \langle \psi^{\vK_2} , \phi^{\vK_1} \rangle = 0$, and the swap symmetries give $\langle \psi^{\vK_1}, \phi^{\vK_1}\rangle = \langle \psi^{\vK_2}, \phi^{\vK_2}\rangle$.
    We conclude that for $\sigma_\theta$ sufficiently small, the diagonal terms are non-zero, by using the continuity in $\sigma_\theta$ and as it is non-zero at $\sigma_\theta = 0$, see Lemma~\ref{lem:phipsiIntegral} in Appendix~\ref{sec:AppendixIntegrals}. Thus, the matrix is invertible, and since $\alpha_1 = \alpha_2 = 0$, the required result follows. The projector formula is obtained by an analog computation.
\end{proof}

In the following we are going to note,
\begin{equation*}
    Z_0  = \Range \LinOp_k,\;\text{ and }\; X_0  = (\Range \LinOp_k)\cap X.
\end{equation*}

The following theorem gives the existence of the reduced LS equation for this basis and provides some of its properties.
\begin{thm}
    \label{thm:LyapunovSchmidt}
    For any non-Pythagorean wave number $k\in \dN^\ast$ and given its associated bifurcation point $\chi^k$, there exists a smooth-analytic function $\bPhi^k : U\times U \times V \subset \dR^2\times \dR \to \dR^2$, with $U\times U \times V$ a neighborhood of $(0,0,\chi^k)$, a smooth-analytic function $G : U \times U \times V \subset \dR^2\times \dR \to X_0$ and a neighborhood $O$ of the constant solution in $X$, such that, for $\chi \in V$ and $ f \in O$, the following equivalence holds, 
    
    \begin{center}
        \begin{minipage}{.8\textwidth}
            $\F(f,\chi) = 0 \text{ in } Z$, if and only if, $f = r_1 \phi^{\vK_1} + r_2 \phi^{\vK_2} + G(r_1, r_2, \chi)$, $(r_1, r_2) \in U\times U$, and $\Phi^k(r_1,r_2, \chi) = 0$.
        \end{minipage}
    \end{center}
    
    Moreover, $\bPhi^k$ admits the following field lines: there exist functions $ \Lambda^k,\Xi^k : U \times V  \to \dR$, such that, $\bPhi^k(r, r , \chi) = (1 , 1)^{\mathsf{T}} \Xi^k(r,\chi)$, $\bPhi^k(r, 0, \chi) = (1 , 0)^{\mathsf{T}}\Lambda^k(r,\chi)$
    and $\bPhi^k$ admits the following swap symmetry,
    \begin{equation}
        \label{eq:PhiSwapSymetry}
        \bPhi^k(r_1,r_2,\chi) = \begin{pmatrix}
                                0 & 1\\
                                1 & 0
                            \end{pmatrix} \bPhi^k(r_2,r_1,\chi).
    \end{equation}
    
    Finally, the functionals defined above have the following vanishing derivatives, \\ for all $n \geq 0$ and $\chi \in V$,\begin{subequations}\label{eq:dGPhiVanishingDerivatives}\begin{align}
        \label{eq:dG0chi0}
        D_r G(0,0,\chi) &\equiv 0 \textit{ in } L(\dR^2, X_0),\\
        \label{eq:dchiG0chi0}
        D_{\chi}^n G(0,0,\chi) &\equiv 0 \textit{ in } X_0, \\
        \label{eq:dchiPhi0chi0}
        \partial_{\chi}^n \bPhi^k(0,\chi) &\equiv 0.
    \end{align}
    \end{subequations}
\end{thm}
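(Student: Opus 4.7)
The plan is to combine the classical Lyapunov--Schmidt reduction with the $x_2$-invariance and swap symmetries of Section~\ref{sec:Preliminaries}, using the decompositions $X = \ker \LinOp_k \oplus X_0$ and $Z = \ker \LinOp_k \oplus Z_0$ together with the projector $Q$ from Proposition~\ref{prop:SpaceDecompositionLS}. I would parametrize $f = r_1 \phi^{\vK_1} + r_2 \phi^{\vK_2} + w$ with $w \in X_0$, and introduce
\begin{equation*}
    \Psi(w,r_1,r_2,\chi) \defeq (I-Q)\,\F\bigl(r_1 \phi^{\vK_1} + r_2 \phi^{\vK_2} + w,\chi\bigr) \colon X_0 \times \dR^2 \times \dR \to Z_0.
\end{equation*}
Since $\F$ has only the quadratic nonlinearity $\chi\,\partial_\theta(\B_\tau[f]\,f)$ in $f$, $\Psi$ is real-analytic in all arguments jointly; since $\F(0,\chi)\equiv 0$, the origin is a zero of $\Psi$; and $D_w\Psi(0,0,0,\chi^k) = (I-Q)\LinOp_{\chi^k}|_{X_0}$ agrees with the bijection $\LinOp_{\chi^k}|_{X_0}\colon X_0 \to Z_0$ supplied by Proposition~\ref{prop:SpaceDecompositionLS}. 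The analytic implicit function theorem then yields a unique analytic $G \colon U\times U\times V \to X_0$ solving $\Psi(G,\cdot) = 0$, and uniqueness together with $\F(0,\chi)\equiv 0$ forces $G(0,0,\chi) \equiv 0$ on $V$.

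I would then define the reduced equation by the two cokernel pairings,
\begin{equation*}
    \bPhi^k(r_1,r_2,\chi) = \Normpsiphi \Bigl(\bigl\langle \F\bigl(r_1\phi^{\vK_1}+r_2\phi^{\vK_2}+G(r_1,r_2,\chi),\chi\bigr), \psi^{\vK_1}\bigr\rangle, \ \bigl\langle \F(\cdots),\psi^{\vK_2}\bigr\rangle\Bigr)^{\mathsf{T}}.
\end{equation*}
The equivalence claim then follows from the splitting of $Z$ along $\ker\LinOp_k \oplus Z_0$: the $(I-Q)\F=0$ half is satisfied by construction of $G$, while $Q\F=0$ is exactly $\bPhi^k(r_1,r_2,\chi)=0$.

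The field-line structure and the swap identity~\eqref{eq:PhiSwapSymetry} would be obtained by transporting the PDE symmetries through the IFT uniqueness. For the $\Lambda$-line, $\phi^{\vK_1}$ has positional Fourier support only on $\pm\vK_1$ and so is $x_2$-independent; one verifies directly that the space of $x_2$-independent a.e.\ functions is invariant under $\F$, since the $\partial_{x_2}$ advection term drops out and $c = (\gamma - \sigma_c\Delta_x)^{-1}\int f\dd\theta$ is $x_2$-independent whenever $\int f\dd\theta$ is, so $\B_\tau[f]$ inherits the invariance. Uniqueness then forces $G(r_1,0,\chi)$ to be $x_2$-independent, whence its positional Fourier support avoids $\pm\vK_2$; since $\psi^{\vK_2}$ lives on $\pm\vK_2$, the second pairing vanishes and $\bPhi^k(r_1,0,\chi) = (1,0)^{\mathsf{T}}\Lambda^k(r_1,\chi)$. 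For the $\Xi$-line and~\eqref{eq:PhiSwapSymetry}, I would combine $\F\swapOp=\swapOp\F$ (Proposition~\ref{prop:SwapOperator}) with $\swapOp\phi^{\vK_1}=\phi^{\vK_2}$ and $\swapOp\psi^{\vK_1}=\psi^{\vK_2}$ (Proposition~\ref{prop:swapRelation}): applying $\swapOp$ to the parameterization maps $(r_1,r_2,G(r_1,r_2,\chi))$ to $(r_2,r_1,\swapOp G(r_1,r_2,\chi))$, so uniqueness forces $\swapOp G(r_1,r_2,\chi) = G(r_2,r_1,\chi)$, which is exactly~\eqref{eq:PhiSwapSymetry}; specializing to $r_1=r_2=r$ equates the two components of $\bPhi^k(r,r,\chi)$ and defines $\Xi^k$.

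Finally, the vanishing-derivative identities~\eqref{eq:dchiG0chi0}--\eqref{eq:dchiPhi0chi0} reduce to $G(0,0,\chi)\equiv 0$ and $\bPhi^k(0,0,\chi)\equiv 0$ as functions of $\chi\in V$, so all $\chi$-derivatives at the origin vanish trivially. For~\eqref{eq:dG0chi0}, I would differentiate the complementary equation in $r_j$ at $r=0$ to obtain $(I-Q)\LinOp_{\chi^k}\bigl[\phi^{\vK_j}+\partial_{r_j}G(0,0,\chi^k)\bigr]=0$; since $\LinOp_{\chi^k}\phi^{\vK_j}=0$ and $(I-Q)\LinOp_{\chi^k}|_{X_0}$ is a bijection, $\partial_{r_j}G(0,0,\chi^k)=0$ follows. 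The main obstacle is the careful bookkeeping required to transport the two discrete symmetries through the IFT: one must check that each component of $\F$ (including the nonlocal chemical coupling $\B_\tau$) respects both $x_2$-independence and the swap action, and that the projector $Q$ is compatible with these symmetries so that the IFT restricted to each symmetric subspace still yields a unique solution there.
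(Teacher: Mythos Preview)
Your proposal is correct and follows essentially the same approach as the paper: the classical Lyapunov--Schmidt reduction via the analytic implicit function theorem, transport of the swap symmetry through IFT uniqueness to obtain \eqref{eq:PhiSwapSymetry} and the $\Xi$-line, and the $x_2$-invariance argument for the $\Lambda$-line. The only cosmetic difference is in the $\Lambda$-line: the paper sets up a separate Lyapunov--Schmidt reduction on the $x_2$-invariant subspaces $X_{\overline{x}_2},Z_{\overline{x}_2}$ with a one-dimensional projector $\widetilde{Q}$ and then identifies $\widetilde{G}(r_1,\chi)=G(r_1,0,\chi)$ by uniqueness, whereas you argue directly that $G(r_1,0,\chi)$ stays $x_2$-independent; these are the same argument, and the ``obstacle'' you flag (compatibility of $Q$ with $x_2$-invariance) is precisely what the paper verifies by showing $\widetilde{Q}=Q|_{Z_{\overline{x}_2}}$ via $\langle\,\cdot\,,\psi^{\vK_2}\rangle=0$ on $x_2$-independent functions.
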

\begin{proof}
    We use the space decomposition and the projection operator $Q$, of Proposition~\ref{prop:SpaceDecompositionLS}, to obtain that the problem is equivalent to,
    \begin{align*}
        Q \F(r_1 \phi^{\vK_1} + r_2 \phi^{\vK_2} + g; \chi) &= 0 \text{ in } \ker \LinOp_k,\\
        (I-Q) \F(r_1 \phi^{\vK_1} + r_2 \phi^{\vK_2} + g; \chi) &= 0  \text{ in } Z_0,
    \end{align*}
    for $g\in X_0$.
    
    Applying the Lyapunov--Schmidt procedure, using the analytic implicit function theorem as in \cite{kielhofer2006bifurcation}, from the analyticity of $F$, we obtain that there exists $U$, a neighborhood of $(0,0)$, $V$, a neighborhood of $\chi^k$, and a unique analytic function,
    \begin{equation*}
        G: U\times V \subset \dR^2\times \dR \to X_0,
    \end{equation*}
    such that,
    \begin{equation}
        \label{eq:LSFunctionEquationGequation}
        (I-Q) \F(r_1 \phi^{\vK_1} + r_2 \phi^{\vK_2}  + G(r_1, r_2,\chi); \chi) = 0 \text{ in } Z_0.
    \end{equation}
    The original problem is thus equivalent to,
    \begin{equation}
        \label{eq:LSFunctionEquation}
        Q\F(r_1 \phi^{\vK_1} + r_2 \phi^{\vK_2}  + G(r_1, r_2,\chi); \chi) = 0 \text{ in } \ker \LinOp_k.
    \end{equation}
    
    Then, using the representation of the operator $Q$ as in Proposition~\ref{prop:SpaceDecompositionLS}, since the $\psi^\vK$-functions are orthogonal, we can define, 
    \begin{equation}
        \label{eq:LSFunctionPhi}
        \bPhi^k(r_1,r_2,\chi) = \Normpsiphi\begin{pmatrix}\langle \psi^1, \F(r_1 \phi^{\vK_1} + r_2 \phi^{\vK_2} + G(r_1, r_2,\chi); \chi)\rangle\\
        \langle \psi^2, \F(r_1 \phi^{\vK_1} + r_2 \phi^{\vK_2} + G(r_1, r_2,\chi); \chi)\rangle
        \end{pmatrix}
    \end{equation}
    and then equation \eqref{eq:LSFunctionEquation} is equivalent to,
    \begin{equation}
        \label{eq:LSFunctionPhiEquation}
        \bPhi^k(r_1,r_2,\chi) = 0 \text{ in } \dR^2.
    \end{equation}
    
    Now, since $f = 0$ is a solution for all $\chi$, this implies, from the uniqueness of the definition of $G$, the property~\eqref{eq:dchiPhi0chi0}. The equivalence of the finite--dimensional problem with the equation $\F(0,\chi) = 0$ implies the property~\eqref{eq:dchiPhi0chi0}. The property~\eqref{eq:dG0chi0} follows in the same way as Corollary I.2.4 in \cite{kielhofer2006bifurcation}.

    We now prove the existence of $\Xi$. To do so, we prove the identity~\eqref{eq:PhiSwapSymetry}. We first note that, using the properties of Proposition~\ref{prop:swapRelation} and Proposition~\ref{prop:SwapOperator},\begin{equation*}
        \swapOp \psi^{\vK_1} = \psi^{\vK_2},\; \text{ and } \; \langle \swapOp  \cdot , \psi^{\vK_1}\rangle = \langle  \cdot , \swapOp \psi^{\vK_1}\rangle = \langle  \cdot , \psi^{\vK_2}\rangle.
    \end{equation*}
    This leads to the following commutation identities,
    \begin{equation*}
        \swapOp(I - Q) = (I -Q)\swapOp, \text{ and } \swapOp Q = Q\swapOp.
    \end{equation*}
    This together with the commutation of $\swapOp$ with the equation functional $F$, as in Proposition~\ref{prop:SwapOperator}, we obtain,
    \begin{equation}
        \swapOp (I-Q)\F(f,\chi) = (I-Q)\F(\swapOp f,\chi).
    \end{equation}
    By applying $S$ to equation~\eqref{eq:LSFunctionEquationGequation}, and using the previous commutations, this leads to,
    \begin{equation*}
         0 = \swapOp (I-Q)\F(r_1 \phi^{\vK_1} + r_2 \phi^{\vK_2} + G(r_1, r_2,\chi),\chi) = (I-Q)\F(r_1\phi^{\vK_2} + r_2\phi^{\vK_1} + \swapOp G(r_1,r_2,\chi),\chi),
    \end{equation*}
    and thus, since the right side of the previous equation is zero, from the uniqueness of $G$, this leads to,
    \begin{equation}
        \swapOp G(r_1,r_2,\chi) =  G(r_2,r_1,\chi).
    \end{equation}
    We plug this into the definition of $\Phi^k$. This gives
    \begin{align*}
        \bPhi^k(r_1,r_2;\chi) = &  \Normpsiphi \begin{pmatrix} \langle \psi^{\vK_1}, \F(\swapOp (r_1\phi^{\vK_2} + r_2\phi^{\vK_1} + G(r_2, r_1, \chi));\chi) \rangle \\  \langle \psi^{\vK_2}, \F(\swapOp (r_1\phi^{\vK_2} + r_2\phi^{\vK_1} + G(r_2, r_1, \chi));\chi) \rangle \end{pmatrix}\\
        = & \Normpsiphi \begin{pmatrix}  \langle \psi^{\vK_1}, \swapOp \F(r_1\phi^{\vK_2} + r_2\phi^{\vK_1} + G(r_2, r_1, \chi);\chi) \rangle \\  \langle \psi^{\vK_2}, \swapOp \F(r_1\phi^{\vK_2} + r_2\phi^{\vK_1} + G(r_2, r_1, \chi);\chi) \rangle \end{pmatrix}  \\
        = & \Normpsiphi \begin{pmatrix} \langle \swapOp^* \psi^{\vK_1},  \F(r_1\phi^{\vK_2} + r_2\phi^{\vK_1} + G(r_2, r_1, \chi);\chi) \rangle \\  \langle \swapOp^* \psi^{\vK_2},  \F(r_1\phi^{\vK_2} + r_2\phi^{\vK_1} + G(r_2, r_1, \chi);\chi) \rangle \end{pmatrix} \\
        = &  \Normpsiphi \begin{pmatrix}  \langle \swapOp \psi^{\vK_1},  \F(r_1\phi^{\vK_2} + r_2\phi^{\vK_1} + G(r_2, r_1, \chi);\chi) \rangle \\  \langle \swapOp \psi^{\vK_2},  \F(r_1\phi^{\vK_2} + r_2\phi^{\vK_1} + G(r_2, r_1, \chi);\chi) \rangle \end{pmatrix}\\
        = &  \Normpsiphi \begin{pmatrix} \langle \psi^{\vK_2},  \F(r_1\phi^{\vK_2} + r_2\phi^{\vK_1} + G(r_2, r_1, \chi);\chi) \rangle \\  \langle  \psi^{\vK_1},  \F(r_1\phi^{\vK_2} + r_2\phi^{\vK_1} + G(r_2, r_1, \chi);\chi) \rangle \end{pmatrix} \\
        = & \begin{pmatrix} 0 & 1 \\ 1 & 0 \end{pmatrix} \Phi^k(r_2, r_1;\chi),
    \end{align*}
    where we use that $\swapOp^*\psi^{\vK_1} = \swapOp\psi^{\vK_1} = \psi^{\vK_2}$. Defining $\Xi$ as, $\Xi^k(r,\chi) = \left(\bPhi^k(r,r;\chi)\right)_1$, yields the required result.
    
    We now prove the existence of $\Lambda$. To this purpose, we define the following subspaces of $X$ and $Z$,
    \begin{align*}
        X_{\overline{x}_2} &= X \cap \left\{ f(x_1,x_2, \theta) \equiv \int f(x_1,s, \theta)\dd s \right\},\\
        Z_{\overline{x}_2} &= Z \cap \left\{ f(x_1,x_2, \theta) \equiv \int f(x_1,s, \theta)\dd s \right\},
    \end{align*}
    corresponding to functions that are constant in the $x_2$-variable. From the structure of the equation $\F$ and its linearized operator ($\F$ and $D_f\F$ preserve constancy in the second variable), we note that,
    \begin{align*}
        \F|_{X_{\overline{x}_2}} & \in C(X_{\overline{x}_2} \times \dR, Z_{\overline{x}_2})\\
        D_f \F|_{X_{\overline{x}_2}} & \in C(X_{\overline{x}_2} \times \dR, L(X_{\overline{x}_2}, Z_{\overline{x}_2})),\\
        D_f \F(0,\chi^k) |_{X_{\overline{x}_2}} & = \LinOp_k|_{X_{\overline{x}_2}} : X_{\overline{x}_2} \to Z_{\overline{x}_2}.
    \end{align*}
    This implies that under this restriction,
    \begin{align*}
        \ker (\LinOp_k|_{X_{\overline{x}_2}}) = \Span \{ \phi^{\vK_1}\},\\
        \coker (\LinOp_k|_{X_{\overline{x}_2}}) = \Span \{ \psi^{\vK_1}\},
    \end{align*}
    and the following space decompositions hold,
    \begin{align*}
        X_{\overline{x}_2} &= \Span \{ \phi^{\vK_1}\}\oplus (\Range \left(\LinOp^k|_{X_{\overline{x}_2}}\right) \cap X_{\overline{x}_2} ),\\
        Z_{\overline{x}_2} &= \Span \{ \phi^{\vK_1}\} \oplus \Range \left(\LinOp^k|_{X_{\overline{x}_2}}\right).
    \end{align*}

    Furthermore, we have that $\Range \left(\LinOp_k|_{X_{\overline{x}_2}}\right) = \Range (\LinOp_k) \cap Z_{\overline{x}_2}$. We use the following notations, $\widetilde{Z}_0  = \Range \left(\LinOp_k|_{X_{\overline{x}_2}}\right), \widetilde{X}_0 = Z_0 \cap X_{\overline{x}_2}$ and we define the projection operator $\widetilde{Q} : Z_{\overline{x}_2} \to \ker \left(\LinOp^k|_{X_{\overline{x}_2}}\right) $ along $\widetilde{Z}_0$, given by $\widetilde{Q} = \langle \cdot , \psi^{\vK_1}\rangle \Normpsiphi \phi^{\vK_1}$. We then apply the Lyapunov--Schmidt procedure on this reduced space and obtain the existence of a function,
    \begin{equation*}
        \widetilde{G} : \widetilde{U} \times \widetilde{V} \subset \dR \times \dR \to \widetilde{X}_0,
    \end{equation*}
    defined implicitly as the unique function such that for any $r_1, \lambda \in \widetilde{U} \times \widetilde{V}$,
    \begin{equation*}
        (I-\widetilde{Q}) \F(r_1\phi^{\vK_1}+ \widetilde{G}(r_1, \chi),\chi) =0.
    \end{equation*}
    Now, as $\F(r_1\phi^{\vK_1}+ \widetilde{G}(r_1, \chi),\chi) \in Z_{\overline{x}_2}$, and from the definition of $\psi^{\vK_2}$, we obtain that,
    \begin{equation}
        \label{eq:Phi2iszero}
        \langle \F(r_1\phi^{\vK_1}+ \widetilde{G}(r_1, \chi);\chi), \psi^{\vK_2}\rangle = 0.
    \end{equation}
    Using this fact in the definition of $\widetilde{Q}$ and $Q$, this leads to,
    \begin{equation*}
        0 = (I-\widetilde{Q}) \F(r_1\phi^{\vK_1}+ \widetilde{G}(r_1, \chi),\chi) = (I-Q) \F(r_1\phi^{\vK_1}+ \widetilde{G}(r_1, \chi),\chi).
    \end{equation*}
    But from the uniqueness of the function $G$ in a possibly smaller neighborhood $U'\times V' \subset U\times V$, we have that,
    \begin{equation*}
        G(r_1, 0, \chi) = \widetilde{G}(r_1, \chi) \in \widetilde{X}_0 \subset X_{\overline{x}_2}.
    \end{equation*}
    We then conclude from equation~\eqref{eq:Phi2iszero} that, $(\bPhi^k(r_1, 0 , \chi) )_2= 0$, and the statement follows by defining $\Lambda$ as $\Lambda^k(r, \chi) = (\bPhi^k(r,0,\chi))_1$.
\end{proof}

\subsection{Coefficients Equations}

In this section, we compute the coefficients of the jet of the analytic function $\Phi^k$ defined above.
\begin{prop}
    \label{prop:CoefficientEquations}
    Let $\bPhi^k$ be the associated Lyapunov--Schmidt function of Theorem~\ref{thm:LyapunovSchmidt}.
    Then, it admits the following Taylor expansion,
    \begin{equation}
        \label{eq:PhiJetAnalytic}
        \bPhi{^k}(r_1, r_2, \chi^k + \Tilde{\chi}) = \Normpsiphi\begin{pmatrix}
            r_1(a^k \Tilde{\chi} - b^k r_1^2 - c^k r_2^2)\\
            r_2(a^k \Tilde{\chi} - b^k r_2^2 - c^k r_1^2)
        \end{pmatrix} + R^\Phi(r_1, r_2, \Tilde{\chi}),
    \end{equation}
    where $a^k,b^k$ and $c^k$ are real coefficients defined in \eqref{eq:CoeffDefinitiona}--\eqref{eq:CoeffDefinitionA} and an analytic remainder $R^\Phi$, with
    \begin{equation}
        R^\Phi(r_1, r_2, \Tilde{\chi}) = o(r_1^3+ r_2^3 + r_1^2r_2 + r_1 r_2^2 + \Tilde{\chi}^3 + (r_1+r_2)\Tilde{\chi}),
    \end{equation}
    
\end{prop}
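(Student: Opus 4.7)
The plan is to combine three ingredients already at hand: the real analyticity of $\bPhi^k$ from Theorem~\ref{thm:LyapunovSchmidt}, the symmetry and field-line constraints \eqref{eq:PhiSwapSymetry} and $\bPhi^k(r,0,\chi)=(1,0)^{\mathsf{T}}\Lambda^k(r,\chi)$, and the fact that the basis functions $\phi^{\vK_j}$ and $\psi^{\vK_j}$ live exactly in the positional Fourier modes $\pm \vK_j$ (together with the mode-preserving inverse formula of Proposition~\ref{prop:InverseL}). First, I would expand $\bPhi^k$ as a convergent Taylor series in $(r_1,r_2,\tilde\chi)$; the identities \eqref{eq:dchiPhi0chi0} kill every pure $\tilde\chi$-monomial, the field-line identity forces the second component to be divisible by $r_2$, and by swap \eqref{eq:PhiSwapSymetry} the first component is divisible by $r_1$. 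It therefore suffices to determine the coefficients of $r_1\tilde\chi$, $r_1^3$, $r_1^2r_2$ and $r_1 r_2^2$ in $\bPhi^k_1$ and then invoke swap for $\bPhi^k_2$.

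Write $\F(f,\chi)=\LinOp_\chi f-\chi\,\mathsf{Q}(f)$ with the purely quadratic piece $\mathsf{Q}(f)=\partial_\theta(\B_\tau[f]f)$ and its symmetric polarization $\mathsf{B}(f,g)$. Inserting the Lyapunov--Schmidt ansatz $f=r_1\phi^{\vK_1}+r_2\phi^{\vK_2}+G(r_1,r_2,\chi)$ into $\bPhi^k_1=\Normpsiphi\langle\psi^{\vK_1},\F(f,\chi^k+\tilde\chi)\rangle$, the linear-in-$f$ part contributes only via the $\tilde\chi$-perturbation of $\LinOp_\chi$ because $\LinOp_{\chi^k}\phi^{\vK_j}=0$ and $\LinOp_{\chi^k}G\in\Range(\LinOp_{\chi^k})\perp\psi^{\vK_1}$. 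This produces the term $a^k r_1\tilde\chi$ with $a^k\defeq\Normpsiphi\langle\psi^{\vK_1},\partial_\chi\LinOp_\chi|_{\chi^k}\phi^{\vK_1}\rangle$ and kills the a priori possible $r_2\tilde\chi$ term since $\partial_\chi\LinOp_\chi\phi^{\vK_2}$ sits in modes $\pm\vK_2$. For the purely quadratic-in-$r$ terms, $\mathsf{Q}$ evaluated on $r_1\phi^{\vK_1}+r_2\phi^{\vK_2}$ yields three pieces with Fourier supports $\{0,\pm 2\vK_1\}$, $\{\pm(\vK_1\pm\vK_2)\}$ and $\{0,\pm 2\vK_2\}$; none of these intersect $\{\pm\vK_1\}$ (and $\psi^{\vK_1}$ has no zero mode), so the $r_1^2$, $r_1r_2$ and $r_2^2$ coefficients of $\bPhi^k_1$ all vanish.

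To extract the cubic coefficients, solve $(I-Q)\F(f,\chi^k)=0$ to second order in $r$. Because the quadratic pairings against $\psi^{\vK_j}$ vanish, the implicit function satisfies
\[
G(r_1,r_2,\chi^k)=\chi^k\,\LinOp_{\chi^k}^{-1}\bigl[r_1^2\mathsf{Q}(\phi^{\vK_1})+2r_1 r_2\mathsf{B}(\phi^{\vK_1},\phi^{\vK_2})+r_2^2\mathsf{Q}(\phi^{\vK_2})\bigr]+O(r^3).
\]
By Proposition~\ref{prop:InverseL}, $\LinOp_{\chi^k}^{-1}$ preserves positional Fourier modes, so the three second-order pieces of $G$ inherit the mode sets $\{0,\pm 2\vK_1\}$, $\{\pm(\vK_1\pm\vK_2)\}$ and $\{0,\pm 2\vK_2\}$ respectively. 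The cubic part of $\bPhi^k_1$ then arises only from $-2\chi^k\langle\psi^{\vK_1},\mathsf{B}(r_1\phi^{\vK_1}+r_2\phi^{\vK_2},G^{(2)})\rangle$; a direct convolution-of-mode-supports count shows that only $\mathsf{B}(\phi^{\vK_1},G^{(2)}_{r_1^2})$ and the combination $\mathsf{B}(\phi^{\vK_1},G^{(2)}_{r_2^2})+\mathsf{B}(\phi^{\vK_2},G^{(2)}_{r_1 r_2})$ produce a contribution in $\{\pm\vK_1\}$, while the three remaining combinations vanish. Declaring $b^k$ and $c^k$ to be (minus) the corresponding scalar products gives the announced $-b^k r_1^3-c^k r_1 r_2^2$ structure; reality of $a^k,b^k,c^k$ follows from Proposition~\ref{prop:PreservationOfCentralConjSym} applied to the central-conjugate-symmetric factors, and $R^\Phi$ collects all analytically smaller terms.

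The main obstacle will be this last step: systematically bookkeeping which Fourier-mode convolutions survive the pairing with $\psi^{\vK_1}$, and rewriting the surviving scalar products as explicit integrals in $\theta$ using the inversion formula~\eqref{eq:InversionFormula}, in a form amenable to the $\sigma_\theta\to 0^+$ asymptotics of Appendix~\ref{sec:AppendixIntegrals} that eventually identify them with the closed-form expressions \eqref{eq:CoeffDefinitiona}--\eqref{eq:CoeffDefinitionA}.
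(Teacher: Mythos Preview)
Your proposal is correct and follows essentially the same route as the paper: reduce to the first component via the swap relation~\eqref{eq:PhiSwapSymetry}, expand $\F=\LinOp_\chi f-\chi\,\partial_\theta(\B_\tau[f]f)$, compute the second-order piece of $G$ from the implicit equation, and use positional Fourier mode supports (together with the mode-preserving inverse of Proposition~\ref{prop:InverseL}) to kill the $r_1^2$, $r_1r_2$, $r_2^2$, $r_1^2r_2$ and $r_2^3$ contributions to $\bPhi^k_1$. The paper carries this out by differentiating the auxiliary function $\H(\mathbf r,\tilde\chi)=\F(\phi^k(\mathbf r)+G,\chi^k+\tilde\chi)$ systematically and identifying $A_{ij}=D^2_{ij}G(0,0)$, but the content is identical to your direct expansion.

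Two small points. First, your normalization is off by the factor $\Normpsiphi$: in the statement, $\Normpsiphi$ is pulled out in front of the vector, so the coefficients $a^k,b^k,c^k$ as defined in \eqref{eq:CoeffDefinitiona}--\eqref{eq:CoeffDefinitionc} do \emph{not} carry $\Normpsiphi$ (compare your $a^k\defeq\Normpsiphi\langle\psi^{\vK_1},\partial_\chi\LinOp_\chi\phi^{\vK_1}\rangle$ with the paper's $a^k=-\tfrac{1}{2\pi}\langle\psi^{\vK_1},\partial_\theta\B_\tau[\phi^{\vK_1}]\rangle$). Second, the ``main obstacle'' you flag---rewriting the scalar products as explicit $\theta$-integrals and extracting the $\sigma_\theta\to0^+$ asymptotics---is not part of the proof of this proposition at all; Proposition~\ref{prop:CoefficientEquations} only asserts the Taylor shape with $a^k,b^k,c^k$ \emph{defined} by \eqref{eq:CoeffDefinitiona}--\eqref{eq:CoeffDefinitionA}, and the asymptotic analysis is deferred to Lemma~\ref{prop:CoefficentFormulae} and Proposition~\ref{prop:IntkjExplicitForms}.
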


\begin{proof}
    We first note from property~\eqref{eq:PhiSwapSymetry}, that we only need to compute the jet of the first component $(\Phi^k)_1$ of $\Phi^k$, as 
    \begin{equation}
        \bPhi^k(r_1, r_2, \chi^k + \Tilde{\chi}) = \begin{pmatrix}
            \bPhi^k_1(r_1, r_2, \chi^k + \Tilde{\chi})\\
            \bPhi^k_1(r_2, r_1, \chi^k + \Tilde{\chi})
        \end{pmatrix}.
    \end{equation}
    We begin by computing the Taylor expansion of the function $G$ in the space $X_0$ at zero using its implicit definition.

    In the following, for $\mathbf{r} = (r_1, r_2) \in \dR^2$, we denote $\phi^k(\mathbf{r}) = r_1\phi^{\vK_1}+r_2\phi^{\vK_2}$, and introduce,
    \begin{equation*}
        \H(\mathbf{r},\tilde{\chi}) \defeq \F(\phi^k(\mathbf{r}) + G(\mathbf{r},\tilde{\chi}), \chi^k + \tilde{\chi}).
    \end{equation*}
    We also drop the dependence of $G$ on $\mathbf{r}$ and $\Tilde{\chi}$ for the sake of notation conciseness.
    Then, we can write
    \begin{equation*}
       \H(\mathbf{r},\tilde{\chi})  = -\frac{\tilde{\chi}}{2\pi}\partial_\theta \B_\tau[\phi^k(\mathbf{r})+G] +\LinOp^k G - (\chi^k + \tilde{\chi}) \partial_\theta (B_\tau [\phi^k(\mathbf{r}) + G](\phi^k(\mathbf{r}) + G)),
    \end{equation*}
    where we used that $\phi^k(\mathbf{r})$ is in the kernel of the linearized.
    
    The first order derivative in the $r_i$ variable reads as,
    \begin{equation}
        \label{eq:DevPhidrEq1}
        \begin{split}
            \frac{\dd}{\dd r_i} \H(\mathbf{r},\Tilde{\chi}) =& -\frac{\tilde{\chi}}{2\pi}\partial_\theta \B_\tau\left[\phi^{\vK_i}+D_{i}G\right] +\LinOp^k \left(D_{i}G\right) \\
            & - (\chi^k + \tilde{\chi}) \frac{\dd}{\dd r_i} \left( \partial_\theta (\B_\tau [\phi^k(\mathbf{r})+ G](\phi^k(\mathbf{r}) + G))\right).
        \end{split}
    \end{equation}

    Using \eqref{eq:dchiG0chi0} in the last term of equation~\eqref{eq:DevPhidrEq1}, when evaluating $\dd H/\dd r_i$ at $(0,0)$, we obtain,
    \begin{equation}
        \label{eq:DevPhidr}
        \partial_{i} \H(0,0) = \LinOp^k D_{i} G (0,\chi^k) = 0.
    \end{equation}

    Now for the cross derivative with $\Tilde{\chi}$, 
    \begin{equation}
        \begin{split}
            \frac{\dd^2}{\dd r_i \dd \tilde{\chi}}\H(\mathbf{r},\Tilde{\chi}) =& -\frac{1}{2\pi}\partial_\theta \B_\tau\left[\phi^{\vK_i}+D_{i}G\right] -\frac{\Tilde{\chi}}{2\pi}\partial_\theta \B_\tau\left[D^2_{i \tilde{\chi}}G\right] +\LinOp^k \left(D^2_{r_i  \tilde{\chi}}G\right) \\
            & - \frac{\dd}{\dd r_i} \left( \partial_\theta (\B_\tau [\phi^k(\mathbf{r})+ G](\phi^k(\mathbf{r}) + G))\right)\\
            & - (\chi^k + \tilde{\chi}) \frac{\dd^2}{\dd r_i \dd \Tilde{\chi}} \left( \partial_\theta (\B_\tau [\phi^k(\mathbf{r})+ G](\phi^k(\mathbf{r}) + G))\right).
        \end{split}
    \end{equation}

    Using equations~\eqref{eq:dchiG0chi0}, \eqref{eq:dG0chi0} and \eqref{eq:dchiG0chi0}, when evaluating at $(0,0)$ we obtain,
    \begin{equation}
        \label{eq:DevPhidrdchi}
        \partial^2_{i \tilde{\chi}} \H(0,0) = -\frac{1}{2\pi}\partial_\theta \B_\tau[\phi^{\vK_i}] +\LinOp^k D^2_{ i\tilde{\chi}} G(0,0).
    \end{equation}

    The second derivatives in $\mathbf{r}$ read
    \begin{equation}
        \label{eq:DevPhid2r2Eq1}
        \begin{split}
            \frac{\dd^2}{\dd r_i \dd r_j} \H(\mathbf{r},\Tilde{\chi}) =&  - \frac{\tilde{\chi}}{2\pi} \partial_\theta \B_\tau\left[D^2_{ij} G\right] +\LinOp^k D^2_{ij} G \\
            & - (\chi^k + \tilde{\chi}) \frac{\dd^2}{\dd r_i \dd r_j} \left( \partial_\theta (\B_\tau [\phi^k(\mathbf{r}) + G](\phi^k(\mathbf{r}) + G))\right).
        \end{split}
    \end{equation}

    Using the identity \eqref{eq:dchiG0chi0} again, when evaluating at $(0,0)$ after developing the last term of the previous equation, we obtain,
    \begin{equation}
        \label{eq:DevPhid2r2}
        \begin{split}
            \partial^2_{ij} \H(0,0) = \LinOp^k D^2_{ij} G(0,0)- \chi^k  \left( \partial_\theta (\B_\tau [\phi^{\vK_i}]\phi^{\vK_j}+\B_\tau [\phi^{\vK_j}]\phi^{\vK_i})\right).
        \end{split}
    \end{equation}
    Then, using the implicit definition of $G$,
    \begin{equation}
        0 = \frac{\dd^2}{\dd r_i\dd r_j } \left((I-Q) \F(\phi^k(\mathbf{r}) + G,\chi^k+\tilde{\chi})\right)_{(0,0)} = (I-Q) \partial_{ij} \H(0,0) = \partial_{ij} \H(0,0),
    \end{equation}
    where the last equality holds as the first term is necessarily in the image of $\LinOp_k$,
    \begin{equation}
        \label{eq:DevPhid2rIinRange}
        \left( \partial_\theta (\B_\tau [\phi^{\vK_i}]\phi^{\vK_j}+\B_\tau [\phi^{\vK_j}]\phi^{\vK_i})\right) \in \Range(\LinOp_k),
    \end{equation}
    as,
    \begin{equation}
        \mathcal{F}_x\left\{\partial_\theta (\B_\tau [\phi^{\vK_i}]\phi^{\vK_j}+\B_\tau [\phi^{\vK_j}]\phi^{\vK_i})\right\}_{\vK}=0,
    \end{equation}
    for $\vK\in\{\pm \vK_1,\pm\vK_2\}$.
    Hence, we obtain,
    \begin{equation}
        \label{eq:DevPhiAterm}
        D^2_{ij} G(0,0) = \chi^k \left(\LinOp_k\right)^{-1}\left( \partial_\theta (\B_\tau [\phi^{\vK_i}]\phi^{\vK_j}+\B_\tau [\phi^{\vK_j}]\phi^{\vK_i})\right).
    \end{equation}
    We denote these functions as $A_{ij} = D^2_{ij}G(0,0)$.
    
    The third orders in $y$ follow similarly,
    \begin{equation}
        \label{eq:DevPhid3r3Eq1}
        \begin{split}
            \frac{\dd^3}{\dd r_i^2 \dd r_j} \H(y,\Tilde{\chi}) =&  - \frac{\tilde{\chi}}{2\pi} \partial_\theta \B_\tau\left[D^2_{iij} G\right] +\LinOp^k D^2_{iij} G \\
            & - (\chi^k + \tilde{\chi}) \frac{\dd^3}{\dd r_1^2 \dd r_2} \left( \partial_\theta (\B_\tau [\phi^k(\mathbf{r}) + G](\phi^k(\mathbf{r}) + G))\right)
        \end{split}
    \end{equation}
    When evaluating at $(0,0)$ after developing the last term of the previous equation, we get,
    \begin{equation}
        \label{eq:DevPhid3r}
        \begin{split}
            \partial^3_{iij} \H(0,0) &= \LinOp^k D^2_{iij} G(0,0)\\
            &- \chi^k   \partial_\theta\left( \B_\tau [A_{ii}]\phi^{\vK_j} + \B_\tau [\phi^{\vK_j}] A_{ii}\right)\\
            &- 2 \chi^k  \partial_\theta \left(  \B_\tau [A_{ij}]\phi^{\vK_i} + \B_\tau [\phi^{\vK_i}]A_{ij}\right).
        \end{split}
    \end{equation}
    The development of the jet of $\Phi^1$ follows from \eqref{eq:dchiPhi0chi0} and by taking the scalar product of \eqref{eq:DevPhidr},  \eqref{eq:DevPhidrdchi}, \eqref{eq:DevPhid2r2} and \eqref{eq:DevPhid3r} with $\psi^{\vK_1}$.
    
    From the equation~\eqref{eq:DevPhid3r}, the definition~\eqref{eq:DevPhiAterm}, using the mode preserving property~\eqref{eq:modePreservingInverseL} of the inverse operator $(\LinOp^k)^{-1}$ from Proposition~\eqref{prop:InverseL}, and the Fourier product–convolution property,  we get that the functions $\psi^{\vK_1} \partial^3_{222}H(0,0)$ and $\psi^{\vK_1} \partial^3_{ 112}H(0,0)$ have no $0$-mode in Fourier in $x$, thus,
    \begin{equation*}
        \langle \psi^{\vK_1}, \partial^3_{222}\H(0,0)\rangle = 0 \text{ and } \langle \psi^{\vK_1}, \partial^3_{112}\H(0,0)\rangle = 0 .
    \end{equation*}

    Using the previous property and after dropping all the terms that involve derivatives of $H$ that are in the image of the linearized operator, this leads to the required result with,

    {\small 
    \begin{equation*}
        a^k  = (\Normpsiphi)^{-1}\partial_{1\widetilde{\chi}} (\bPhi^k(0,0))_1, \hspace{.5 em} b^k  = - \frac{(\Normpsiphi)^{-1}}{6}\partial_{111} (\bPhi^k(0,0))_1, \hspace{.5 em} 
        c^k  = - \frac{(\Normpsiphi)^{-1}}{2}\partial_{122} (\bPhi^k(0,0))_1,
    \end{equation*}}
    that is,
    \begin{align}
        \label{eq:CoeffDefinitiona}
        a^k & = \frac{-1}{2\pi} \left\langle \psi^{\vK_1},  \partial_\theta \B_\tau [\phi^{\vK_1}]\right\rangle,\\
        \label{eq:CoeffDefinitionb}
        b^k & =  \frac{ \chi^k}{2}  \left\langle \psi^{\vK_1},  \partial_\theta( A_{11} \B_\tau [\phi^{\vK_1}] + \phi^{\vK_1} \B_\tau [A_{11}])\right\rangle,\\
        c^k & = \frac{ \chi^k}{2}\left\langle \psi^{\vK1},  \partial_\theta( A_{22} \B_\tau [\phi^{\vK_1}] + \phi^{\vK_1} \B_\tau [A_{22}])\right\rangle \nonumber \\
        \label{eq:CoeffDefinitionc}
        & \hspace{1.em} + \chi^k\left\langle \psi^{\vK_1},  \partial_\theta(\phi^{\vK_2} \B_\tau [A_{12}] +  A_{12} \B_\tau [\phi^{\vK_2}])\right\rangle,
    \end{align}
    recalling that,
    \begin{equation}
        \label{eq:CoeffDefinitionA}
        A_{ij}(\sigma_\theta) = \chi^k \left(\LinOp_k\right)^{-1}\left( \partial_\theta (\B_\tau [\phi^{\vK_i}]\phi^{\vK_j}+\B_\tau [\phi^{\vK_j}]\phi^{\vK_i})\right).
    \end{equation}
\end{proof}

The coefficients $a^k, b^k$ and $c^k$ are semi--explicit, as they are defined as integrals involving non-explicit solutions to elliptic equations with trigonometric coefficients. In the rest of this section, we prove the behavior of these coefficients as $\sigma_\theta$ goes to zero. We obtain this in Proposition~\ref{prop:PositivorNegative}. We start with the following Lemma.

\begin{lem}
    \label{prop:CoefficentFormulae}
    The coefficients $a^k,b^k, c^k$ defined in Proposition~\ref{prop:CoefficientEquations} are third order polynomial in $\tauk$, continuous in $\sigma_\theta$ in $(0,\sigma_\theta^k)$, and there exists continuous functions $b^k_{-1}, b^k_0, c^k_{-1}$ and $c^k_0$ on $[0,\sigma_k)$, such that,
    \begin{align}
        b^k(\sigma_\theta) &= \frac{1}{\sigma_\theta} b^k_{-1}(\sigma_\theta) + b^k_0(\sigma_\theta),\\
        c^k (\sigma_\theta) &= \frac{1}{\sigma_\theta} c^k_{-1}(\sigma_\theta) + c^k_0(\sigma_\theta).
    \end{align}
    Furthermore, the limits as $\sigma_\theta$ goes to $0$, are given by 
    \begin{equation}\label{lim:abc}
        \lim_{\sigma_\theta \to 0^+} a^k(\sigma_\theta) =  \frac{4\pi}{(\chi^k_0)^2}, \; \; \lim_{\sigma_\theta \to 0^+}  b^k_{-1}(\sigma_\theta ) = \Aint^{\vK_1}, \; \; \lim_{\sigma_\theta \to 0^+}  c^k_{-1}(\sigma_\theta) = \Aint^{\vK_2}.
    \end{equation} 
    Where the terms $\Aint^{\vK_j}$ for $j=1,2$ are defined as, 
    \begin{align}
        \Aint^{\vK_j} & = -4\pi^2 \int \fX^{\vK_1} \fW^{\vK_j} \dd \theta,
    \end{align}
    with,
    \begin{equation}
        \fX^{\vK_1}   = (\fB_{\vK_1} \partial_\theta \fV^{-\vK_1}  + \fB_{-\vK_1} \partial_\theta \fV^{\vK_1})
    \end{equation}
    and the $\fW^{\vK_j}$-functions are the unique solutions in the space $L^2_{\theta, 0}(\dC)$ of the equations,
    \begin{equation}
        \partial_{\theta\theta} \fW^{\vK_j} = \partial_\theta (\fB_{\vK_j} \fU^{-\vK_j} + \fB_{-\vK_j} \fU^{\vK_j} ).
    \end{equation}
\end{lem}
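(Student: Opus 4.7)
The plan is to analyze each coefficient via partial Fourier decomposition in the $x$-variable combined with the explicit inverse formula of Proposition~\ref{prop:InverseL}. The decisive feature is that $(\LinOp_k)^{-1}$ acts as $\tfrac{1}{\sigma_\theta}(\partial_{\theta\theta})^{-1}$ on the zero $x$-mode and is continuous in $\sigma_\theta$ on every other mode by~\eqref{est:ResolventContinuity}; this dichotomy produces the announced splitting $\sigma_\theta^{-1}b^k_{-1}+b^k_0$. I dispatch $a^k$ directly: since $\B_\tau[\phi^{\vK_1}]$ is $x$-Fourier supported on $\pm\vK_1$ with mode value $\fB_{\pm\vK_1}\cdot(2\pi/\chi^k)$, by the bifurcation identity $\int\fU^{\pm\vK_1}\dd\theta=2\pi/\chi^k$, one integration by parts after pairing with $\psi^{\vK_1}$ yields $a^k=4\pi/(\chi^k)^2$, whose continuity and limit $4\pi/(\chi^k_0)^2$ are inherited from those of $\chi^k$.

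Next I analyze the $x$-Fourier supports of the forcings of $A_{ij}$: convolving $\pm\vK_i$ with $\pm\vK_j$ gives $\{\pm(\vK_i\pm\vK_j)\}$, so only $A_{11}$ and $A_{22}$ touch the zero mode and hence carry a $\sigma_\theta^{-1}$ singularity, while $A_{12}$ is entirely regular. A direct evaluation of the zero mode of the forcing identifies it with $(4\pi/\chi^k)\partial_{\theta\theta}\fW^{\vK_j}$ by the defining equation of $\fW^{\vK_j}$; then $\chi^k(\LinOp_k)^{-1}$ cancels $\chi^k$ against $1/\chi^k$ and produces the clean identity $\mathcal{F}_x\{A_{jj}\}_0=(4\pi/\sigma_\theta)\fW^{\vK_j}$ at leading order. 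The assembly into $b^k,c^k$ then reduces to extracting the $\sigma_\theta^{-1}$-singular part: only the convolution of $A_{jj}(\text{mode } 0)$ with the $\pm\vK_1$-mode of $\B_\tau[\phi^{\vK_1}]$ contributes singularly to the $\pm\vK_1$-component tested by $\psi^{\vK_1}$, because $\B_\tau[A_{jj}]$ vanishes at mode $0$ (as $\fB_0=0$) and every other Fourier pairing uses only the regular, $\sigma_\theta$-continuous pieces of the inverse operator. One integration by parts moving $\partial_\theta$ onto $\fV^{\mp\vK_1}$, followed by grouping the two complex-conjugate modes, reconstitutes the symbol $\fX^{\vK_1}=\fB_{\vK_1}\partial_\theta\fV^{-\vK_1}+\fB_{-\vK_1}\partial_\theta\fV^{\vK_1}$, yielding the announced limits $\lim b^k_{-1}=\Aint^{\vK_1}$ and $\lim c^k_{-1}=\Aint^{\vK_2}$.

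Continuity of $b^k_0$ and $c^k_0$ on $[0,\sigma^k_\theta)$ then follows from the continuity of $\chi^k$ together with Proposition~\ref{prop:ResolventProperties} applied to the smooth inputs $\partial_\theta\fB_\vL$, $1$, and their products. The polynomial dependence in $\tauk$ is tracked from the primitive blocks: $\fB_\vL$ and hence $\fU^\vL=\fOpR^\vL\partial_\theta\fB_\vL$ are affine in $\tauk$, $\fV^\vL$ and the resolvents $\fOpR^\vL$ are $\tauk$-independent, and every $\chi^k$ arising in the assembly cancels against a $1/\chi^k$ coming from the bifurcation condition, leaving polynomial expressions of degree at most three.

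The main obstacle I anticipate is the Fourier bookkeeping: correctly identifying which mode pairings survive, isolating the single source of the $\sigma_\theta^{-1}$ singularity, and verifying the two successive cancellations --- the $\chi^k$-cancellation that turns $(\partial_{\theta\theta})^{-1}$ of the forcing into $\fW^{\vK_j}$, and the integration-by-parts that compresses the pairing into $\fX^{\vK_1}$. Particular care is needed when extracting the singular part of $b^k$ to confirm that the $\pm 2\vK_1$-mode contributions of $A_{11}$, which do pair back to $\pm\vK_1$ in the convolution with $\B_\tau[\phi^{\vK_1}]$, generate only regular pieces that enter $b^k_0$ and not $b^k_{-1}$.
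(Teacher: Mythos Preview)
Your proposal is correct and follows essentially the same approach as the paper: both isolate the $\sigma_\theta^{-1}$ singularity via the zero $x$-mode of $A_{jj}$ through the inversion formula of Proposition~\ref{prop:InverseL}, use that $\B_\tau$ annihilates constant-in-$x$ functions to discard the singular contribution of $\B_\tau[A_{jj}]$, and recover $\fX^{\vK_1}$ by integration by parts against $\psi^{\vK_1}$. A minor bonus in your treatment is the exact identity $a^k=4\pi/(\chi^k)^2$ for all $\sigma_\theta>0$ (via self-adjointness of $\fOpR^{\vK_1}_{\sigma_\theta}$ in the bilinear pairing $\int fg\,\dd\theta$), whereas the paper only computes the $\sigma_\theta\to 0$ limit.
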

\begin{proof}
The continuity of the coefficient $a_{\sigma_\theta}$  follows directly from its definition and by the continuity in $L^2_0$ of $\phisig^{\vK_1}$ and $\psisig^{\vK_1}$ in $\sigma_\theta$, from estimte~\eqref{est:ResolventContinuity} of Proposition~\eqref{prop:ResolventProperties}.

The limit is given by,
\begin{equation}
    \lim_{\sigma_\theta \to 0^+} a^k(\sigma_\theta) =  -\frac{1}{2\pi} \left\langle \psi_0^{\vK_1},  \partial_\theta \B_\tau [\phi_0^{\vK_1},]\right\rangle.
\end{equation}
Recalling that, by definition,
\begin{equation}
    \int\fU^{\vK_i}_{\sigma_\theta} \dd \theta = \frac{2\pi}{\chi^k_{\sigma_\theta}},
\end{equation}
this leads to,
\begin{equation}
    \label{eq:BphiFourier}
    \mathcal{F}\{\B_\tau[\phi^{\vK_i}]\}_\ell  = \frac{2\pi}{\chi^k_{\sigma_\theta}}(\fB_{\vK_i} \delta_{\vK_i,\ell}+\fB_{-\vK_i} \delta_{-\vK_i,\ell}),
\end{equation}
and,
\begin{align*}
    -\left\langle \psi_0^{\vK_1},  \partial_\theta \B_\tau [\phi_0^{\vK_i1}]\right\rangle & = \frac{2\pi}{\chi^k_{0}} \int \fV^{-\vK_1}_0(- \partial_\theta \fB_{\vK_1} )+ \fV^{\vK_1}_0  (-\partial_\theta \fB_{ - \vK_1} )\dd \theta \\
    & = \frac{2\pi}{\chi^k_{0}} 2 \int \Real \left(\fU^{\vK_1}_0\right) \dd \theta = \frac{8\pi^2}{(\chi^k_{0})^2}.
\end{align*}


We now treat the coefficients $b^k$ and $c^k$. We then note that, from the Fourier product-convolution property, the term
\begin{equation}
    \partial_\theta (\B_\tau [\phi^{\vK_1}]\phi^{\vK_2}+\B_\tau [\phi^{\vK_2}]\phi^{\vK_1})
\end{equation} has vanishing zero Fourier mode in $x$. Thus using the form of the inversion formulas from Proposition~\ref{prop:InverseL}, we obtain that $A_{12}(\sigma_\theta)$ is a continuous function of $\sigma_\theta$ in the space $L^2_0$. Then, using the Fourier product-convolution property again, we can decompose
\begin{equation}
    \partial_\theta \left(\B_\tau [\phi^{\vK_i}]\phi^{\vK_i}\right),
\end{equation}
as a sum of a function constant in the $x$ variable, plus a function that only has $2\vK_i$ and $-2\vK_i$ as non-zero modes in Fourier in $x$. From the inversion formulas from Proposition~\ref{prop:InverseL}, we can
decompose $A_{ii}$ as,
\begin{equation}
    \label{eq:DecompositionA}
    A_{ii}(\sigma_\theta) = \frac{1}{\sigma_\theta}(\partial_{\theta \theta})^{-1}A_{ii,-1}(\sigma_\theta) + A_{ii,0}(\sigma_\theta),
\end{equation}
with
\begin{equation}
    \label{eq:DecompositionAContinuity}
    A_{ii,-1}, A_{ii,0}\in C([0,\sigma_\theta^k), L^2_0).
\end{equation}
$A_{ii,-1}$ is explicitly given by,
\begin{equation}
    A_{ii,-1}(\sigma_\theta) = 4\pi \partial_\theta (\fB_{\vK_i}\fU^{-\vK_i}_{\sigma_\theta} + \fB_{-\vK_i}\fU^{\vK_i}_{\sigma_\theta}),
\end{equation}
that is constant in $x$, so that,
\begin{equation}
    \label{eq:DecompositionABA0}
    \B_\tau [(\partial_{\theta \theta})^{-1}A_{ii,-1}] = 0.
\end{equation}

The decomposition \eqref{eq:DecompositionA}--\eqref{eq:DecompositionAContinuity} and the property \eqref{eq:DecompositionABA0} in the definitions \eqref{eq:CoeffDefinitionb}--\eqref{eq:CoeffDefinitionc} of $b_{\sigma_\theta}$ and $c_{\sigma_\theta}$ leads to the required result with,

\begin{align*}
    b_{-1}(\sigma_\theta) &\defeq -\frac{1}{2} \chi^k_{\sigma_\theta} \left\langle \B_\tau [\phi^{\vK_1}] \partial_\theta \psi^{\vK_1}, (\partial_{\theta \theta})^{-1}A_{11,-1}(\sigma_\theta)\right\rangle,\\
    c_{-1}(\sigma_\theta) &\defeq - \frac{1}{2} \chi^k_{\sigma_\theta} \left\langle  \B_\tau [\phi^{\vK_1}] \partial_\theta \psi^{\vK_1},  (\partial_{\theta \theta})^{-1}A_{22,-1}(\sigma_\theta)\right\rangle.
\end{align*}

Using the identity~\eqref{eq:BphiFourier} and the Plancherel identity, we obtain that,

\begin{align}
    b_{-1}(\sigma_\theta) & = -4\pi^2 \int \fX_{\sigma_\theta} \fW^{\vK_1}_{\sigma_\theta} \dd \theta,\\
    c_{-1}(\sigma_\theta) & = -4\pi^2  \int \fX_{\sigma_\theta} \fW^{\vK_2}_{\sigma_\theta} \dd \theta,
\end{align}
where
\begin{equation}
    \fX_{\sigma_\theta} = (\fB_{\vK_1} \partial_\theta \fV^{-\vK_1}_{\sigma_\theta} + \fB_{-\vK_1} \partial_\theta \fV^{\vK_1}_{\sigma_\theta})
\end{equation}
and the $\fW^{\vK_i}_{\sigma_\theta}$ are given as the unique solutions in the space $L^2_{\theta;0}(\dC)$ of the equations
\begin{equation}
    \partial_{\theta\theta} \fW^{\vK_i}_{\sigma_\theta} = \partial_\theta (\fB_{\vK_i} \fU^{-\vK_i}_{\sigma_\theta} + \fB_{-\vK_i} \fU^{\vK_i}_{\sigma_\theta}).
\end{equation}

From the continuity in $\sigma_\theta$ of the inverse operator $(-\fM_{\vK}+ \sigma_\theta \partial_{\thth})^{-1}$, we obtain the required result.

\end{proof}

We now state a result on the explicit form of the integrals $\Aint^{\vK_j}$. The result is obtained in the Appendix~\ref{sec:AppendixIntegrals}.

\begin{prop}
    \label{prop:IntkjExplicitForms}
    The integrals $\Aint^{\vK_j}$ admit the following explicit forms,
    \begin{align}
        \Aint^{\vK_1} &= -\frac{16\pi^6 k^3 }{|\cEll|^3\lmbk^2} \Aek^3 \left(\mathsf{g}^{\mathsf{Y}}_{k}\mathsf{g}^{\mathsf{X}}_{k} + \mathsf{a}^{\mathsf{Y}}_{k}\left(\mathsf{a}^{\mathsf{X}}_{k} + \beta^1(|\Azink|)\mathsf{b}^{\mathsf{X}}_{k}\right)\right),\\
        \Aint^{\vK_2} &= -\frac{16\pi^6 k^3}{|\cEll|^3\lmbk^2} \Aek^3\left(-\mathsf{g}^{\mathsf{Y}}_k\mathsf{g}^{\mathsf{X}}_k + \mathsf{a}^{\mathsf{Y}}_k\left(\left(\frac{1-|\Azink|^2}{1+|\Azink|^2}\right)\mathsf{a}^{\mathsf{X}}_k+ \beta^2(|\Azink|)\mathsf{b}^{\mathsf{X}}_k\right)\right),
    \end{align}
    with $\beta^i : [0,1)\to \dR_+$ defined as,
    \begin{equation}
        \beta^1(s) = -(\ln(1-s^2)+s^2)s^{-4}, \; \; \beta^2(s) = -(\ln(1+s^2)-s^2)s^{-4},
    \end{equation}
    and where $\mathsf{a}^{\fX}_k, \mathsf{b}^{\fX}_k , \mathsf{g}^{\fX}_k , \mathsf{a}^{\fY}_k, \mathsf{g}^{\fY}_k$ are second order polynomials in $\tauk$, and their respective coefficients are given in Table~\ref{tb:Intaaggbterms}.
\end{prop}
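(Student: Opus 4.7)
The plan is to compute both integrals explicitly at $\sigma_\theta=0$, which is where all the limiting quantities appearing in Proposition~\ref{prop:IntkjExplicitForms} live by Lemma~\ref{prop:CoefficentFormulae}. The explicit formulas $\fU^{\vK_j}_0 = -\partial_\theta \fB_{\vK_j}/\fM_{\vK_j}$ and $\fV^{\vK_j}_0 = 1/\fM_{-\vK_j}$ stated in the excerpt, combined with Table~\ref{tb:FourierMultipliers}, render every ingredient as an explicit rational expression in $\cos\theta$ and $\sin\theta$, with $\tauk$ entering linearly through $\fB$ and not at all through $\fV$ or $\fM$.

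First I would assemble $\fX^{\vK_1} = \fB_{\vK_1}\partial_\theta \fV^{-\vK_1}_0 + \fB_{-\vK_1}\partial_\theta \fV^{\vK_1}_0$, using the conjugation identities $\fB_{-\vK_1} = \fB_{\vK_1}^\ast$ and $\fV^{-\vK_1}_0 = (\fV^{\vK_1}_0)^\ast$ from \eqref{prop:ConjugateBM} and \eqref{prp:ConjugateResolvent} to recognize $\fX^{\vK_1}$ as twice the real part of a single product over $|\fM_{\vK_1}|^2 = \lmbk^2(\sigk^2+\cos^2\theta)$. Sorting the outcome by powers of $\tauk$ produces the polynomial factors $\mathsf{a}^{\fX}_k, \mathsf{b}^{\fX}_k, \mathsf{g}^{\fX}_k$ of Table~\ref{tb:Intaaggbterms}.

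Next I would solve $\partial_{\thth}\fW^{\vK_j} = \partial_\theta(\fB_{\vK_j}\fU^{-\vK_j}_0 + \fB_{-\vK_j}\fU^{\vK_j}_0)$ in $L^2_{\theta;0}(\dC)$ by successive integration. The source is an exact derivative, so one integration gives $\partial_\theta\fW^{\vK_j}$ up to a constant fixed by the $2\pi$-periodicity constraint; a second antidifferentiation with zero-mean constraint determines $\fW^{\vK_j}$. The asymmetry between $j=1$ and $j=2$ comes from $\fM_{\vK_1}$ involving $\cos\theta$ while $\fM_{\vK_2}$ involves $\sin\theta$, producing the two distinct functions $\beta^1,\beta^2$ in the final statement. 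Finally I would evaluate $\Aint^{\vK_j} = -4\pi^2\int \fX^{\vK_1}\fW^{\vK_j}\,\dd\theta$ by the contour substitution $z = e^{i\theta}$. The factor $\sigk^2+\cos^2\theta$ transforms into $(z^4+(4\sigk^2+2)z^2+1)/(4z^2)$, whose roots inside $|z|<1$ satisfy $z^2 = \Azink$, with $\Aek$ precisely the inverse of the corresponding Wronskian. Residues at these poles are algebraic combinations in $\Azink$ and $\Aek$ and contribute the polynomial combinations $\mathsf{a}^{\fY}_k\mathsf{a}^{\fX}_k,\, \mathsf{g}^{\fY}_k\mathsf{g}^{\fX}_k$ to the final expressions.

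The main obstacle will be the second antiderivative in the construction of $\fW^{\vK_j}$: integrating terms such as $e^{in\theta}/(\sigk^2+\cos^2\theta)^m$ in $\theta$ produces logarithms, and after multiplication by $\fX^{\vK_1}$ and contour evaluation one encounters series of the form $\sum_{n\geq 2}(\pm\Azink)^n/n^2$ or closely related dilogarithmic combinations. These must resum to the $\beta^i(|\Azink|)$ functions stated, where the peculiar normalization $(\ln(1\mp s^2)\mp s^2)\,s^{-4}$ with the $\mp s^2$ subtraction reflects the fact that the zero-mode and the $\pm 2$-modes of $\partial_\theta\fW^{\vK_j}$ are fixed (respectively killed and normalized) by the periodicity and zero-mean constraints imposed during the two successive integrations. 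Careful bookkeeping of these low Fourier modes and of the branch conventions for the logarithmic primitives is the technical crux; once completed, grouping terms by their $\tauk$-degree and matching them against the entries of Table~\ref{tb:Intaaggbterms} yields the stated closed-form expressions for $\Aint^{\vK_1}$ and $\Aint^{\vK_2}$.
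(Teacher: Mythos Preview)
Your overall setup is right: at $\sigma_\theta=0$ everything is explicit and rational in $\cos\theta,\sin\theta$, and the conjugation identities reduce $\fX^{\vK_1},\fY^{\vK_j}$ to real combinations over $|\fM_{\vK_1}|^2$ or $|\fM_{\vK_1}|^4$. But the execution you sketch diverges from the paper's and contains a miscounting that would stall the computation.

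The paper does \emph{not} compute $\fW^{\vK_j}$ as an explicit function of $\theta$, nor does it attempt a contour integral of the product $\fX^{\vK_1}\fW^{\vK_j}$. Instead it passes to Fourier in $\theta$ and uses Parseval:
\[
\Aint^{\vK_j}\;=\;16\pi^{3}\sum_{\omega\ge1}\frac{1}{\omega}\,\Imag\bigl(\overline{\mathsf{y}}^{\vK_j}_{\omega}\,\mathsf{x}^{\vK_1}_{\omega}\bigr),
\]
where $\mathsf{x}_\omega,\mathsf{y}_\omega$ are the $\theta$-Fourier modes of $\fX^{\vK_1},\fY^{\vK_j}$ and the single factor $1/\omega$ comes from $\hat{\fW}_\omega=-i\hat{\fY}_\omega/\omega$. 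Since $\fX$ and $\fY$ are honest rational functions of $e^{i\theta}$ (trigonometric polynomial over $|\fM_{\vK_1}|^{2}$ or $|\fM_{\vK_1}|^{4}$), their Fourier modes are obtained by residues at the interior roots $z^{2}=\Azink$; this is the content of Lemmas~\ref{lem:MhatmdFourier}--\ref{lem:xyfourier}. The polynomials $\mathsf{a}^{\fX}_k,\mathsf{b}^{\fX}_k,\mathsf{g}^{\fX}_k,\mathsf{a}^{\fY}_k,\mathsf{g}^{\fY}_k$ emerge there, not as a $\tauk$-grading of $\fX$ or $\fY$ themselves, but as the constants describing the generic even modes $n\ge4$ (the $\mathsf{a}$'s and $\mathsf{b}$'s, with $\mathsf{a}^{\fX}_k$ the slope in $n$ and $\mathsf{b}^{\fX}_k$ the intercept) versus the exceptional contribution at $n=2$ (the $\mathsf{g}$'s). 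The distinction between $\Aint^{\vK_1}$ and $\Aint^{\vK_2}$ is then just the shift relation $\mathsf{y}^{\vK_2}_{n}=(-1)^{n/2}\mathsf{y}^{\vK_1}_{n}$ coming from Proposition~\ref{prop:RotationUKukomega}.

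Your expectation of dilogarithms is the miscounting. Because $\partial_{\thth}\fW=\partial_\theta\fY$ produces only \emph{one} factor of $1/\omega$ in Fourier (the outer $\partial_\theta$ on the right cancels one of the two on the left), the series to be summed has the shape $\sum_{m\ge2}(\pm1)^{m}|\Azink|^{2m}/m$, which is purely logarithmic and gives exactly the stated $\beta^{1}(s)=-(\ln(1-s^{2})+s^{2})s^{-4}$ and $\beta^{2}(s)=-(\ln(1+s^{2})-s^{2})s^{-4}$; no $\sum 1/n^{2}$ appears. The $\mp s^{2}$ subtraction is simply the omission of the $m=1$ term, which is accounted for separately by the $\mathsf{g}^{\fX}\mathsf{g}^{\fY}$ contribution. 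Relatedly, your plan to contour-integrate $\fX^{\vK_1}\fW^{\vK_j}$ directly is obstructed by the fact that $\fW^{\vK_j}$ is \emph{not} rational in $e^{i\theta}$---as an antiderivative of a rational function it carries a logarithmic/arctangent piece---so the substitution $z=e^{i\theta}$ does not reduce to a residue computation on a meromorphic integrand. The paper's Parseval route avoids ever materializing $\fW$ and keeps all residue work on the rational objects $\fX$ and $\fY$.
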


\begin{table}[!ht]
\centering
\label{tb:Intaaggbterms}
\caption{Definitions of polynomials $\mathsf{a}^{\fX}_k[\tauk], \mathsf{b}^{\fX}_k [\tauk], \mathsf{g}^{\fX}_k[\tauk] , \mathsf{a}^{\fY}_k[\tauk], \mathsf{g}^{\fY}_k[\tauk]$.}
\small
\begin{tabular}{|c||c|c|c|}
\hline
 & $\tauk^2$ & $\tauk^1$ & $\tauk^0$ \\ \hline

$\mathsf{a}^{\fY}_k$
& $-\sigk(1+|\Azink|^2)$
& $-(1+|\Azink|)^2/2$
& $2\sigk|\Azink|$
\\[2mm]\hline

$\mathsf{g}^{\fY}_k$
& $\sigk|\Azink|$
& $(2+|\Azink|)/2$
& $-2\sigk$
\\[2mm]\hline 

$\mathsf{a}^{\fX}_k$
& $0$
& $2\sigk(1-|\Azink|^2)^2$
& $\begin{aligned}
    & - 4\sigk^2|\Azink|(1+|\Azink|)^2 \\
    &\quad -(1-|\Azink|^2)^2
   \end{aligned}$
\\[4mm]\hline

$\mathsf{g}^{\fX}_k$
& $0$
& $4\sigk|\Azink|(1-|\Azink|^2)^2$
& $\begin{aligned}
    &4\sigk^2\bigl((1-|\Azink|^4)
    + 4(|\Azink|^2+|\Azink|)\bigr) \\
    &\hspace{3em} - 2|\Azink|(1-|\Azink|^2)^2
   \end{aligned}$
\\[4mm]\hline

$\mathsf{b}^{\fX}_k$
& $0$
& $-2\sigk(1-|\Azink|^2)^2(1+|\Azink|^2)$
& $\begin{aligned}
    &(1-|\Azink|^2)^2(1+|\Azink|^2) \\
    &\; - 8\sigk^2|\Azink|^2(1+|\Azink|)^2
   \end{aligned}$
\\[4mm]\hline

\end{tabular}
\end{table}

\begin{lem}
    \label{lem:AsymptoticsrootsIkj}
    The integrals $\Aint^{\vK_1}$ and $\Aint^{\vK_2}$ are third order polynomials in $\tauk$, and the coefficients of $\Aint^{\vK_1}[\tauk]$ , $(\Aint^{\vK_1}+\Aint^{\vK_2})[\tauk]$, $(\Aint^{\vK_1}-\Aint^{\vK_2})[\tauk]$ have matching leading order asymptotic expansions as $\sigk$ goes to $0$. Furthermore, their roots satisfy the following expansion as $\sigk$ goes to $0$,
    \begin{subequations}\label{eq:roots1}
    \begin{align}
        \uptau_{k,0}(\sigk )& = \sigk + o(\sigk),\\
        \uptau_{k,1}(\sigk )&= \tfrac{1}{2 \sigk \ln(\sigk)} + o\left(\tfrac{1}{\sigk \ln(\sigk)}\right),\\
        \uptau_{k,2} (\sigk )& = \tfrac{-1}{\sigk} + o\left(\tfrac{1}{\sigk}\right).  
    \end{align}
    \end{subequations}
    That is, two roots that have real part going to $-\infty$ and one real root that is positive and converges to $0$.

    Finally, we have the only positive real roots of $\Aint^{\vK_1}[\tauk]$ , $(\Aint^{\vK_1}+\Aint^{\vK_2})[\tauk]$ and $(\Aint^{\vK_1}-\Aint^{\vK_2})[\tauk]$ satisfy the following property,
    \begin{equation*}
        0<\uptau_{0,\Aint^{\vK_1}+ \Aint^{\vK_2}} < \uptau_{0,\Aint^{\vK_1}} <\uptau_{0,\Aint^{\vK_1} - \Aint^{\vK_2}}.
    \end{equation*}
\end{lem}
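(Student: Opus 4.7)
My plan is to start from the explicit closed forms in Proposition~\ref{prop:IntkjExplicitForms} together with the entries of Table~\ref{tb:Intaaggbterms}, reducing the lemma to an explicit but delicate asymptotic analysis in $\sigk$. The cubic degree in $\tauk$ follows immediately by inspection: from the table the polynomials $\mathsf{a}^{\fX}_k, \mathsf{b}^{\fX}_k, \mathsf{g}^{\fX}_k$ have vanishing $\tauk^2$ coefficient and are hence degree one in $\tauk$, while $\mathsf{a}^{\fY}_k, \mathsf{g}^{\fY}_k$ are degree two, so each of the products appearing in the formulae for $\Aint^{\vK_j}$ is cubic in $\tauk$, and so are their sum and difference.

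Next I would expand every building block as $\sigk\to 0^+$. From the defining quadratic $|\Azink|^2 - 2(2\sigk^2+1)|\Azink|+1=0$ one obtains $|\Azink| = 1-2\sigk+2\sigk^2+O(\sigk^3)$, hence $1-|\Azink|^2 = 4\sigk+O(\sigk^2)$ and $\Aek \sim 1/(2\sigk)$. The functions $\beta^1, \beta^2$ then satisfy $\beta^1(|\Azink|) = -\ln(4\sigk)-1+O(\sigk\ln\sigk)$ and $\beta^2(|\Azink|) = 1-\ln 2+O(\sigk)$. Plugging these expansions into the formulae of Proposition~\ref{prop:IntkjExplicitForms}, a careful computation shows in particular that the naive $O(\sigk^2)$ leading parts of the constant terms in $\mathsf{g}^{\fX}_k$ and $\mathsf{b}^{\fX}_k$ cancel exactly, leaving them of leading order $64\sigk^3(1+\tauk)$ and $64\sigk^3(1-\tauk)$ respectively. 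Collecting coefficient by coefficient, the $\tauk^0$ and $\tauk^1$ coefficients of $\Aint^{\vK_1}$ are dominated by the $\mathsf{a}^{\fY}_k\mathsf{a}^{\fX}_k$ contribution, of order $-\sigk^3$ and $+\sigk^2$; for $\Aint^{\vK_2}$ the prefactor $(1-|\Azink|^2)/(1+|\Azink|^2)\sim 2\sigk$ pushes $\mathsf{a}^{\fY}_k\mathsf{a}^{\fX}_k$ to one higher order in $\sigk$, so these coefficients of $\Aint^{\vK_2}$ are strictly subleading with respect to those of $\Aint^{\vK_1}$. Consequently $\Aint^{\vK_1}$, $\Aint^{\vK_1}+\Aint^{\vK_2}$ and $\Aint^{\vK_1}-\Aint^{\vK_2}$ all share the same leading $\sigk$-asymptotic on every cubic coefficient, which is the matching-leading-order claim.

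The three roots then follow by a Newton polygon applied to $A_3\tauk^3+A_2\tauk^2+A_1\tauk+A_0$ with $A_0\sim-\sigk^3$, $A_1\sim\sigk^2$, $A_2\sim\sigk^3|\ln\sigk|$, $A_3\sim\sigk^4|\ln\sigk|$. The balance $A_1\tauk+A_0\approx 0$ gives the positive real root $\tauk = \sigk+o(\sigk)$; balancing $A_3\tauk^3+A_2\tauk^2\approx 0$ yields $\tauk = -1/\sigk+o(1/\sigk)$; substituting the intermediate ansatz $\tauk = a/(\sigk\ln\sigk)$ into the residual equation $\tauk+2\sigk|\ln\sigk|\tauk^2\approx 0$ and solving $2a^2-a=0$ produces $\tauk = 1/(2\sigk\ln\sigk)+o(1/(\sigk\ln\sigk))$. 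Of these, the first is real and positive, the other two have real part tending to $-\infty$, and $\Aint^{\vK_1}, \Aint^{\vK_1}\pm\Aint^{\vK_2}$ each change sign from positive at $\tauk=0^+$ to negative for $\tauk\gg\sigk$, giving a unique positive real crossing near $\sigk$ in each case.

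For the strict ordering of the positive roots, I argue by intermediate value rather than by further expansion. Writing $\Aint^{\vK_j}=-C\cdot E^j$ with $C>0$, one computes $E^1(0)\approx -64\sigk^3<0$, $E^2(0)\approx 128(1-\ln 2)\sigk^4>0$, and at $\tauk=\uptau_{0,\Aint^{\vK_1}}\approx\sigk$ the relations $E^1(\sigk)\approx 0$ (by definition of the root) and $E^2(\sigk)\approx 32\sigk^4>0$, the latter coming dominantly from $-\mathsf{g}^{\fY}_k\mathsf{g}^{\fX}_k(\sigk)$. These four evaluations translate to $(\Aint^{\vK_1}+\Aint^{\vK_2})(0)>0$ and $(\Aint^{\vK_1}+\Aint^{\vK_2})(\uptau_{0,\Aint^{\vK_1}})<0$, while $\Aint^{\vK_1}-\Aint^{\vK_2}$ is positive on $[0,\uptau_{0,\Aint^{\vK_1}}]$ but tends to $-\infty$ at infinity. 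The intermediate value theorem then places the positive root of $\Aint^{\vK_1}+\Aint^{\vK_2}$ in $(0,\uptau_{0,\Aint^{\vK_1}})$ and the one of $\Aint^{\vK_1}-\Aint^{\vK_2}$ in $(\uptau_{0,\Aint^{\vK_1}},+\infty)$, delivering the chain $0<\uptau_{0,\Aint^{\vK_1}+\Aint^{\vK_2}}<\uptau_{0,\Aint^{\vK_1}}<\uptau_{0,\Aint^{\vK_1}-\Aint^{\vK_2}}$. The main obstacle throughout is the careful bookkeeping of competing polynomial and logarithmic scales, in particular verifying the $O(\sigk^2)$ cancellations in the constant terms of $\mathsf{g}^{\fX}_k,\mathsf{b}^{\fX}_k$ that govern both the matching leading orders and the decisive sign of $E^2$ at $\tauk=\sigk$.
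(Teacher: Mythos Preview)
Your treatment of the cubic degree and of the three root asymptotics via Newton--polygon balancing is essentially the same as the paper's, which writes down the common leading polynomial (after a positive rescaling), applies three rescalings of $\tauk$ and appeals to Rouch\'e's theorem; your balancing arguments are the informal version of exactly this. (Incidentally, your $A_2\sim\sigk^3|\ln\sigk|$ is consistent with the scalings the paper actually uses, even though the displayed polynomial $P_{\sigk}$ seems to carry a misprint $\sigk^2$ in that coefficient.)

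Where you genuinely diverge from the paper is in the ordering of the positive roots. The paper pushes the expansion one order further, inserting $\tauk=\sigk+\sigk^2\,\widetilde\tauk$ into each of the three cubics and applying Rouch\'e again to obtain $\uptau_{0,\Aint^{\vK_1}+\Aint^{\vK_2}}=\sigk+o(\sigk^2)$, $\uptau_{0,\Aint^{\vK_1}}=\sigk+\sigk^2+o(\sigk^2)$, $\uptau_{0,\Aint^{\vK_1}-\Aint^{\vK_2}}=\sigk+2\sigk^2+o(\sigk^2)$, from which the strict ordering is immediate. Your route is instead to read the ordering off the sign of $\Aint^{\vK_2}$ at the exact root of $\Aint^{\vK_1}$ and then invoke the intermediate value theorem together with the already-established uniqueness of the positive root for each cubic. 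This is a legitimate and more elementary alternative: it trades the second-order root expansion for a single sign check on $E^2$. The one place to be careful is that you must evaluate $E^2$ at the \emph{actual} root $\uptau_{0,\Aint^{\vK_1}}=\sigk+O(\sigk^2)$, not at its first approximation $\sigk$; since you claim $E^2(\sigk)\sim 32\sigk^4$ and the $\tauk$-derivative of $E^2$ near $\tauk=\sigk$ is $O(\sigk^3)$, the $O(\sigk^2)$ shift changes $E^2$ by at most $O(\sigk^5)$ and the sign survives, but this step should be stated explicitly rather than conflated with ``$E^1(\sigk)\approx 0$ by definition of the root.''
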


\begin{proof}
    The asymptotic expansions of the coefficients of the polynomials $\Aint^{\vK_1}, \Aint^{\vK_1} + \Aint^{\vK_2}$ and $\Aint^{\vK_1} + \Aint^{\vK_2}$ follow from Proposition~\ref{prop:IntkjExplicitForms}. Rescaling the polynomials by the common factor $\tfrac{1024 \pi^6 k^3 }{|\cEll|^3\lmbk^2} \Aek^3$, the expansion of the roots is equivalent to the expansion of the roots of the following polynomial.

    Let $ \polyP_{\sigk}[\tauk]$ be a third order polynomial in $\tauk$, with coefficients satisfying the following expansions,
    \begin{equation*}
        \polyP_{\sigk}[\tauk] = 2\sigk^4 \ln(\sigk) \tauk^3 + 2\sigk^2 \ln(\sigk) \tauk^2 - \sigk^2 \tauk + \sigk^3 + \textbf{h.o.t},
    \end{equation*}
    where \textbf{h.o.t} is a $\tauk$-polynomial remainder of the form,
    \begin{equation*}
        o\left(\sigk^4 \ln(\sigk) \tauk^3 + \sigk^2 \ln(\sigk) \tauk^2 + \sigk^2 \tauk + \sigk^3\right).
    \end{equation*}
    Then using the following limits,
    \begin{align*}
        &\lim_{\sigk \to 0^+} \frac{1}{\sigk^3}\polyP_{\sigk}[\sigk \widetilde{\tauk}] =  -\widetilde{\tauk} + 1,\\
        &\lim_{\sigk \to 0^+} \frac{1}{\sigk \ln(\sigk)}\polyP_{\sigk}\left[\frac{\widetilde{\tauk}}{\sigk}\right] = 2(-\widetilde{\tauk} - 1) \widetilde{\tauk}^2,\\
        &\lim_{\sigk \to 0^+} \frac{\ln(\sigk)}{2\sigk}\polyP_{\sigk}\left[\frac{\widetilde{\tauk}}{2 \sigk \ln(\sigk)}\right] =  (\widetilde{\tauk} - 1)\widetilde{\tauk},
    \end{align*}
    we obtain that the roots for $\sigk\to 0^+$ are $\widetilde{\tauk}^0 = 1$, $\widetilde{\tauk}^1 = -1$ and $\widetilde{\tauk}^2 = 1$, respectively. We then apply Rouch\'e's theorem to obtain the roots for small $\sigk$, leading to \eqref{eq:roots1}.

    The statement on the ordering of the roots of $\Aint^{\vK_1}, \Aint^{\vK_1} + \Aint^{\vK_2}$ and $\Aint^{\vK_1} + \Aint^{\vK_2}$, follows by expanding further the positive real roots, using the scaling $\sigk^{-4}\polyP_{\sigk}[\sigk+\sigk^2 \widetilde{\tauk}]$ and applying Rouch\'e's theorem. We obtain the following expansions, $ \uptau_{0,\Aint^{\vK_1}+ \Aint^{\vK_2}} = \sigk + o(\sigk^2),$ $\uptau_{0,\Aint^{\vK_1}} = \sigk + \sigk^2 + o(\sigk^2),$ and $\uptau_{0,\Aint^{\vK_1} - \Aint^{\vK_2}} = \sigk + 2\sigk^2 + o(\sigk^2)$. The required result follows.
\end{proof}
Combining the results of Lemma~\ref{prop:CoefficentFormulae} and Lemma~\ref{lem:AsymptoticsrootsIkj}, we obtain the following result on the coefficients of $\bPhi^k$. That is, we recall the limits as in \eqref{lim:abc}, and hence the behaviors of  $b^k(\sigma_\theta)$ and $c^k(\sigma_\theta)$ match the behaviors of $\Aint^{\vK_1}$ and $\Aint^{\vK_2}$ as $\sigma_\theta$ goes to $0$, which is obtained in the previous lemma for small enough $\sigk$. That is, for $2\pi k\sigma_x\lesssim \lambda$. We summarize as follows.
\begin{prop}
    \label{prop:PositivorNegative}
    There exists a constant $\mathrm{c} > 0$, such that for all $\gamma>0, \sigma_c > 0 , \sigma_x > 0, \lambda > 0$ and any non-Pythagorean wave number $k \in \dN^*$, satisfying,
    \begin{equation*}
        \label{eq:smallnesssigk}
        2\pi k \sigma_x < \mathrm{c} \lambda,
    \end{equation*}
    there exists $\sigma_\theta^k >0$ such that for any $0<\sigma_\theta < \sigma_\theta^k$, there exist $\uptau^k_b > 0, \uptau^k_{b+c} > 0 $ and $ \uptau^k_{b-c} > 0 $, such that we have the following behavior of $b^k(\sigma_\theta)$ and $c^k(\sigma_\theta)$:
    \begin{itemize}
        \item $b^k > 0 $ if $0\leq \tauk < \uptau^k_b $, \text{and} $b^k < 0 $ if $\uptau^k_b < \tauk $;
        \item $b^k + c^k > 0 $ if $0\leq \tauk < \uptau^k_{b+c} $, \text{and} $b^k + c^k  < 0 $ if $\uptau^k_{b+c} < \tauk $;
        \item $b^k - c^k > 0 $ if $0\leq \tauk < \uptau^k_{b+c}  <\uptau^k_{b-c} $.
    \end{itemize}
    We denote $\uptau^k_\Lambda$, $\uptau^k_\Xi $ the rescaled roots,
    \begin{equation*}
        \uptau^k_\Lambda =  \tfrac{ \uptau^k_{b}}{2\pi k},\;\;\uptau^k_\Xi =  \tfrac{ \uptau^k_{b+c}}{2\pi k}.
    \end{equation*}
\end{prop}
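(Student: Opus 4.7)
The plan is to deduce Proposition~\ref{prop:PositivorNegative} by combining the decomposition of $b^k, c^k$ from Lemma~\ref{prop:CoefficentFormulae} with the root structure of the limit polynomials $\Aint^{\vK_1}$ and $\Aint^{\vK_1}\pm\Aint^{\vK_2}$ from Lemma~\ref{lem:AsymptoticsrootsIkj}, via a Rouché-type perturbation in $\sigma_\theta$.

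First I would multiply through by $\sigma_\theta$ to work with
\[
\sigma_\theta\, b^k(\sigma_\theta)[\tauk] \;=\; b^k_{-1}(\sigma_\theta)[\tauk] + \sigma_\theta\, b^k_0(\sigma_\theta)[\tauk],
\]
a cubic polynomial in $\tauk$ whose coefficients depend continuously on $\sigma_\theta \in [0,\sigma_\theta^k)$. For $\sigma_\theta>0$ the sign of $b^k$ is the sign of this rescaled cubic, so it suffices to locate its zeros in $\tauk$. By Lemma~\ref{prop:CoefficentFormulae} the coefficients converge, as $\sigma_\theta \to 0^+$, to those of $\Aint^{\vK_1}[\tauk]$. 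The same reduction applies to $\sigma_\theta(b^k+c^k)$ and $\sigma_\theta(b^k-c^k)$, whose limits are $(\Aint^{\vK_1}+\Aint^{\vK_2})[\tauk]$ and $(\Aint^{\vK_1}-\Aint^{\vK_2})[\tauk]$ respectively.

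Next, I would fix the universal constant $\mathrm{c}>0$ so that the conclusion of Lemma~\ref{lem:AsymptoticsrootsIkj} applies for every $\sigk<\mathrm{c}$: each of the three limit cubics then has exactly one simple positive real root and two roots whose real parts tend to $-\infty$ (in particular, sitting strictly in the left half-plane for $\sigk$ small), with the ordering
\[
0 < \uptau_{0,\Aint^{\vK_1}+\Aint^{\vK_2}} < \uptau_{0,\Aint^{\vK_1}} < \uptau_{0,\Aint^{\vK_1}-\Aint^{\vK_2}}.
\]
For such $k$, I then apply Rouché's theorem to each of the three cubic perturbations, using small disjoint disks around the three simple positive limit roots and a large disk in the left half-plane containing the remaining non-positive roots; simplicity of the limit roots, together with continuity of the coefficients in $\sigma_\theta$, produces a threshold $\sigma_\theta^k>0$ so that for $0<\sigma_\theta<\sigma_\theta^k$ each perturbed cubic has a unique positive real root near the corresponding limit root while the other two stay in the left half-plane. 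These perturbed positive roots define $\uptau^k_b, \uptau^k_{b+c}, \uptau^k_{b-c}$; the ordering $\uptau^k_{b+c} < \uptau^k_b < \uptau^k_{b-c}$ is inherited by shrinking $\sigma_\theta^k$ further if needed so the three small disks remain disjoint.

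For the final sign determination, I evaluate each cubic at $\tauk = 0$ and at $\tauk \to \infty$. Using the explicit rescaled polynomial $\polyP_{\sigk}$ from the proof of Lemma~\ref{lem:AsymptoticsrootsIkj}, one has $\polyP_{\sigk}[0] \sim \sigk^3 > 0$ while the leading cubic coefficient $2\sigk^4 \ln(\sigk) < 0$, so each limit polynomial is positive at $\tauk = 0$ and tends to $-\infty$ as $\tauk \to \infty$; by continuity the same holds for the perturbed cubics once $\sigma_\theta<\sigma_\theta^k$ is sufficiently small. Since each perturbed cubic has a unique positive real root with the other two in the left half-plane, it changes sign precisely there, giving the claimed positive-to-negative transitions at $\uptau^k_b, \uptau^k_{b+c}$ (and the positivity for $0\leq\tauk<\uptau^k_{b+c}$ in the $b^k-c^k$ case, together with the ordering $\uptau^k_{b+c}<\uptau^k_{b-c}$). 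The rescalings $\uptau^k_\Lambda = \uptau^k_b/(2\pi k)$ and $\uptau^k_\Xi = \uptau^k_{b+c}/(2\pi k)$ convert from the rescaled $\tauk$ to $\tau$. The main technical obstacle is ensuring the Rouché argument is applied uniformly enough that the same $\sigma_\theta^k$ works simultaneously for all three cubics while preserving the ordering; this only requires finitely many disk separations, so it reduces to a routine smallness choice in $\sigma_\theta$ once $\sigk<\mathrm{c}$ is fixed.
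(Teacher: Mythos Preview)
Your proposal is correct and follows essentially the same approach as the paper: multiply by $\sigma_\theta$ to obtain cubics in $\tauk$ that are continuous at $\sigma_\theta=0$ with limits $\Aint^{\vK_1}$, $\Aint^{\vK_1}\pm\Aint^{\vK_2}$, then invoke Rouch\'e's theorem together with the signs of the zeroth and leading coefficients to transfer the root structure from Lemma~\ref{lem:AsymptoticsrootsIkj}. The paper's own proof is very terse (two sentences), and your write-up simply fills in the details that are left implicit there, including the preservation of the ordering $\uptau^k_{b+c}<\uptau^k_{b-c}$.
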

\begin{proof}
   The proof follows directly from Rouch\'e's theorem and the signs of the third and zeroth order coefficients, as the roots of $b^k_{\sigma_\theta, \sigk}[\tauk]$ and $\sigma_\theta b^k_{\sigma_\theta, \sigk}[\tauk]$ are the same, and the second is continuous in $\sigma_\theta$, with
\begin{equation*}
    \lim_{\sigma_\theta \to 0} \sigma_\theta b^k_{\sigma_\theta, \sigk}[\tauk] = \Aint^{\vK_1}_{\sigk}[\tauk].
\end{equation*}
The same argument applies to $b^k+c^k $ and $b^k - c^k$. 
\end{proof}
\subsection{Existence of Spots and Lanes}
We here conclude this section by proving the existence of bifurcation branches, given the Taylor expansion of the LS function obtained in the previous subsection.
\begin{thm}
    \label{thm:ExistenceSolCurves}
    For all $\gamma>0, \sigma_c > 0 , \sigma_x > 0, \lambda > 0$ and $k \in \dN^*$ a non-Pythagorean wave number. Let $\mathrm{c} >0$ and $\sigma_\theta^k >0$ be given by Proposition~\ref{prop:PositivorNegative}, assuming $2\pi k \sigma_x < \mathrm{c}\lambda$. For all $0<\sigma_\theta < \sigma_\theta^k$, 
    suppose that $0\leq \tau$ and $\tau \neq \uptau^k_\Lambda$ (resp. $\tau \neq \uptau^k_\Xi$), then there exists a non-trivial bifurcation curve, $\zeta^\Lambda \in C((-\delta, \delta), \dR^2\times\dR)$, (resp. $\zeta^\Xi \in C((-\delta, \delta), \dR^2\times\dR)$) of the form,
    \begin{equation}
        \zeta^\Lambda(s) = (s,0, \chi^k +s^2\widetilde{\chi}^\Lambda(s)),
    \end{equation}
    (resp. $\zeta^\Xi(s) = (s,s, \chi^k +s^2\widetilde{\chi}^\Xi(s))$) with $\widetilde{\chi}^\Lambda$ (resp. $\widetilde{\chi}^\Xi$) analytic around $0$ and $\widetilde{\chi}^\Lambda(0) = b^k/a^k $ (rep. $\widetilde{\chi}^\Xi(0) = (b^k + c^k)/a^k$), such that,
    \begin{equation*}
        \bPhi^k\left(\zeta^{\Lambda}(s)\right) = 0 \; \;  \forall s \in  (-\delta, \delta ),
    \end{equation*}
    (resp. $\bPhi^k(\zeta^\Xi(s)) = 0 )$.
\end{thm}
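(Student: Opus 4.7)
The plan is to exploit the field-line structure of $\bPhi^k$ established in Theorem~\ref{thm:LyapunovSchmidt}: restricting to $r_2=0$ (respectively to the diagonal $r_1=r_2$) collapses the $\dR^2$-valued bifurcation equation $\bPhi^k=0$ to a single scalar equation $\Lambda^k(r,\chi)=0$ (respectively $\Xi^k(r,\chi)=0$). Each of these is then solved by a direct application of the analytic implicit function theorem.

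For the lane branch, I would start from the Taylor expansion of Proposition~\ref{prop:CoefficientEquations}: setting $r_2=0$ and reading off the first component gives
\begin{equation*}
    \Lambda^k(r,\chi^k+\tilde{\chi}) = \Normpsiphi\, r\bigl(a^k\tilde{\chi} - b^k r^2\bigr) + o(r^3+r\tilde{\chi}+\tilde{\chi}^3).
\end{equation*}
Identity~\eqref{eq:dchiPhi0chi0} implies $\Lambda^k(0,\chi)\equiv 0$, so $\Lambda^k$ factors analytically as $r\cdot\widetilde{\Lambda}^k(r,\tilde{\chi})$ with $\widetilde{\Lambda}^k(0,0)=0$ and $\partial_{\tilde{\chi}}\widetilde{\Lambda}^k(0,0)=\Normpsiphi a^k$. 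The latter is nonzero by Lemma~\ref{prop:CoefficentFormulae}, which gives $a^k(\sigma_\theta)\to 4\pi/(\chi^k_0)^2>0$ as $\sigma_\theta\to 0$, so for $\sigma_\theta<\sigma^k_\theta$ one has $a^k\neq 0$. The analytic implicit function theorem then delivers a unique analytic map $s\mapsto\tilde{\chi}(s)$ with $\tilde{\chi}(0)=0$ solving $\widetilde{\Lambda}^k(s,\tilde{\chi}(s))=0$ on a neighborhood of the origin.

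To identify the leading order and factor out $s^2$, I would implicitly differentiate $\widetilde{\Lambda}^k(s,\tilde{\chi}(s))=0$ at $s=0$. The absence of a pure $r$-term in $\widetilde{\Lambda}^k$ forces $\tilde{\chi}'(0)=0$, and a second implicit differentiation yields $\tilde{\chi}''(0)=2b^k/a^k$. Hence $\tilde{\chi}(s)=s^2\tilde{\chi}^\Lambda(s)$ with $\tilde{\chi}^\Lambda$ analytic around $0$ and $\tilde{\chi}^\Lambda(0)=b^k/a^k$; the assumption $\tau\neq \uptau^k_\Lambda$ combined with Proposition~\ref{prop:PositivorNegative} ensures $b^k\neq 0$, so the bifurcation is non-degenerate at second order. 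The spot branch is handled identically, working with $\Xi^k$ along the diagonal $r_1=r_2=s$: the same steps, now with $b^k+c^k$ in place of $b^k$, produce $\tilde{\chi}^\Xi$ with $\tilde{\chi}^\Xi(0)=(b^k+c^k)/a^k$, where $b^k+c^k\neq 0$ follows from $\tau\neq \uptau^k_\Xi$ through Proposition~\ref{prop:PositivorNegative}.

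The theorem is essentially a corollary of the careful preparation done in Proposition~\ref{prop:CoefficientEquations}, Lemma~\ref{prop:CoefficentFormulae} and Proposition~\ref{prop:PositivorNegative}. I do not anticipate a genuine obstacle: the only subtle point is justifying the analytic division by $r$ (respectively $s$) to make the implicit function theorem applicable, which is structural and rests on $\bPhi^k(0,\chi)\equiv 0$, i.e.\ on~\eqref{eq:dchiPhi0chi0}.
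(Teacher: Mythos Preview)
Your approach is correct and follows the same high-level strategy as the paper: reduce to the one-dimensional functions $\Lambda^k,\Xi^k$ via the field-line structure, then solve by the analytic implicit function theorem. The technical implementation differs slightly. The paper inserts the ansatz $\tilde\chi=s^2\chi'$ at the outset (Newton's polygon), obtaining $\Upsilon(s,s^2\chi')=s^3\bigl((\omega_1\chi'-\omega_2)+R_1(s,\chi')\bigr)$, and then applies the implicit function theorem directly to the bracket in the variables $(s,\chi')$ at the point $(0,\omega_2/\omega_1)$. You instead first factor $\Lambda^k=r\,\widetilde\Lambda^k$ using~\eqref{eq:dchiPhi0chi0}, apply the implicit function theorem in $(r,\tilde\chi)$ at $(0,0)$, and afterwards extract the $s^2$ factor from the resulting analytic curve $\tilde\chi(s)$ by checking $\tilde\chi'(0)=0$ via implicit differentiation. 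Both routes are standard and equivalent here; the paper's ansatz reaches the asserted form $\tilde\chi=s^2\widetilde\chi^\Lambda(s)$ in one step, while your route is perhaps more elementary but requires the extra observation that $\partial_r\widetilde\Lambda^k(0,0)=0$, which indeed follows from the absence of an $r^2$-term in the expansion of Proposition~\ref{prop:CoefficientEquations}. A minor remark: the hypothesis $\tau\neq\uptau^k_\Lambda$ (i.e.\ $b^k\neq 0$) is not actually needed for the implicit function theorem to produce the curve in either argument; it only ensures $\widetilde\chi^\Lambda(0)\neq 0$, i.e.\ that the pitchfork is genuinely second order and not degenerate.
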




\begin{proof}
    We prove the existence of non-trivial solutions to $\bPhi^k$, by using the reduced dimension functions $\Lambda$, $\Xi$ of Theorem~\ref{thm:LyapunovSchmidt}. Then according to their definitions, Proposition~\ref{prop:CoefficientEquations} and Proposition~\ref{prop:PositivorNegative}, we obtain by assumption on $\tau$, that both $\Lambda$ and $\Xi$ can be represented as a function $\Upsilon : \dR\times \dR \to \dR$, in the form,
    \begin{equation}
    \label{eq:Ups1}
        \Upsilon(r,\tilde{\chi}) = r(\omega_1 \tilde{\chi} - \omega_2 r^2) + R^\Upsilon(r,\chi),
    \end{equation}
    with $\omega_1>0, \omega_2 \neq 0$. For both $\Lambda$ and $\Xi$ we have $\omega_1 = a^k$ and for $\Lambda$ we have $\omega_2 = b^k$ and for $\Xi$ we have $\omega_2 = b^k+ c^k$. Finally, $R^\Upsilon$ is an analytic remainder, such that
    \begin{equation}
        R^\Upsilon(r,\tilde{\chi}) = o\left(r^3 + r\Tilde{\chi} +\Tilde{\chi}^2\right).
    \end{equation}
    We will therefore prove in generality the result for $\Upsilon$. We introduce the following ansatz curve $(s\mapsto (s, s^2\chi')) \in C((-\delta, \delta),\dR\times \dR)$. Plugging this ansatz into equation \eqref{eq:Ups1}, yields
    \begin{equation}
        \Upsilon(s,s^2\chi') = s^3\left((\omega_1 \chi' - \omega_2) + R_1(s,\chi')\right),
    \end{equation}
    with $R_1$ an analytic function, defined as $R_1(s,\chi') = R^\Upsilon(s,\chi' s^2)/s^3 = o(\chi')$. By the Newton's polygon method, $R_1$ satisfies $R_1(0,\chi') \equiv \partial_{\chi'}R_1(0,\chi') \equiv 0$, within the domain of convergence of the analytic series. We thus apply the analytic implicit function theorem to the equation,
    \begin{equation}
        \label{eq:ImplicitChiUpsilon}
        (\omega_1\chi' - \omega_2) + R_1(s,\chi') = 0,
    \end{equation}
    as $(0,\omega_2/\omega_1)$ is a solution, and the derivative of the equation with respect to $\chi'$ evaluated at $(0,\omega_2/\omega_1)$, is equal to $\omega_1>0$. This implies the existence of a non-trivial curve $(-\delta,\delta)\ni s\mapsto (s,s^2\chi'(s))$, with $\chi'$ the analytic implicit function defined by equation~\eqref{eq:ImplicitChiUpsilon}, such that $\chi'(0) = \omega_2/\omega_1$.
    The curve is a solution of,
    \begin{equation*}
        \Upsilon(s,s^2\chi'(s)) = 0 \ \ \forall s \in (-\delta, \delta),
    \end{equation*}
    this concludes the proof.
\end{proof}
    
    \section{Stability of the First Supercritical Branch}
\label{sec:StabilityAnalysis}
In this section, we apply the Principle of Reduced Stability \cite[Chapiter 1, I.18]{kielhofer1983principle} to study the stability of the stationary solutions of the bifurcation curves constructed in Section~\ref{sec:BirfucationCurves}. Firstly, we note that at a bifurcation point for $k\geq 2$, there already exist $(k-1)$--unstable eigenvalues of the linearized operator around the homogeneous solution. As the stability of bifurcation curves inherits the stability of the constant solution from a continuity argument, as far as our local analysis is concerned, we are restricted to studying the stability of the first bifurcation curves for $k = 1$, the first time when the constant solution loses its stability. Secondly, as mentioned in Section~\ref{sec:MainRes}, because of the translation invariance of the solutions, the stability must be studied in the quotient space $X$. 
\begin{thm}
    \label{thm:StabilitySupercritical}
    Let $\gamma>0, \sigma_c > 0 , \sigma_x > 0, \lambda > 0$, and let $k = 1$. Let $\mathrm{c} >0$ and $\sigma_\theta^1 >0$ be given by Proposition~\ref{prop:PositivorNegative}, assuming $2\pi  \sigma_x < \mathrm{c}\lambda$. Suppose that $0<\sigma_\theta < \sigma_\theta^1$, and $0\leq \tau < \uptau^1_\Xi$.
    
    Then, denoting $\mathrm{L}^\Lambda(s)$ and $\mathrm{L}^\Xi(s)$ the linearized operators along the bifurcation curves,
    \begin{align}
        \mathrm{L}^\Lambda(s) & = D_f\F(f^\Lambda(s), \chi^\Lambda(s)) \in L(X,Z),\\
        \mathrm{L}^\Xi(s) & = D_f\F(f^\Xi(s), \chi^\Xi(s)) \in L(X,Z).
    \end{align}
    there exists a constant $0<\delta_2$, such that,
        \begin{equation}
            \Sigma(\mathrm{L}^\Lambda(s)) \cap \dC_+ \neq \emptyset \text{ and } \Sigma(\mathrm{L}^\Xi(s))\subset \dC_-   \ \ \forall s \in (-\delta_2, \delta_2), |s| > 0,
        \end{equation}
\end{thm}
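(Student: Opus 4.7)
The plan is to apply the Principle of Reduced Stability of Kielh\"ofer \cite{kielhofer1983principle}, which reduces the question of the location of the critical eigenvalues of $\mathrm{L}^\Lambda(s)$ and $\mathrm{L}^\Xi(s)$ to the computation of the $2\times 2$ Jacobian of $\bPhi^1$ along each branch. The hypotheses of the principle are already in place: working in the antipodally symmetric space $X$ removes the translation invariance; $0$ is a semi-simple eigenvalue of $\LinOp_1|_X$ of multiplicity $2$ by Proposition~\ref{prop:SpaceDecompositionLS} (whose kernel basis $\{\phi^{\vK_1},\phi^{\vK_2}\}$ is fixed); and by Proposition~\ref{prop:SpectralGapatFirstBif} the remaining spectrum of $\LinOp_1|_X$ lies in $\{\mathrm{Re}\,\mu\le -c\}$ for some $c>0$. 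Hence for $|s|$ small, the spectrum of $\mathrm{L}^\bullet(s)$ splits into two \emph{critical} eigenvalues branching from $0$ plus a remainder that stays uniformly in $\{\mathrm{Re}\,\mu \le -c/2\}$; the stability is governed entirely by the critical pair, which the principle identifies (up to a positive multiplicative factor at $s=0$) with the spectrum of $D_{(r_1,r_2)}\bPhi^1$ evaluated along the branch.

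Next I would compute this Jacobian from the Taylor expansion of Proposition~\ref{prop:CoefficientEquations}, with remainder $R^\Phi = o(|r|^3+|r|\,|\tilde\chi|+\tilde\chi^2)$. Along the lane branch $(s,0,\,\chi^1+(b^1/a^1)s^2+o(s^2))$, the off-diagonal derivatives vanish because each contains a factor of $r_2$, yielding
\begin{equation*}
    D_{(r_1,r_2)}\bPhi^1\big|_{\Lambda}
    = \Normpsiphi\begin{pmatrix} -2b^1 s^2 & 0 \\ 0 & (b^1-c^1)s^2 \end{pmatrix} + o(s^2),
\end{equation*}
while along the spot branch $(s,s,\,\chi^1+((b^1+c^1)/a^1)s^2+o(s^2))$ the cross-terms from $r_1 r_2$ survive, giving
\begin{equation*}
    D_{(r_1,r_2)}\bPhi^1\big|_{\Xi}
    = \Normpsiphi\begin{pmatrix} -2b^1 s^2 & -2c^1 s^2 \\ -2c^1 s^2 & -2b^1 s^2 \end{pmatrix} + o(s^2),
\end{equation*}
with eigenvalues $-2(b^1+c^1)s^2+o(s^2)$ (eigenvector $(1,1)$) and $-2(b^1-c^1)s^2+o(s^2)$ (eigenvector $(1,-1)$).

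The sign analysis then follows directly from Proposition~\ref{prop:PositivorNegative} together with the ordering $\uptau^1_{b+c}<\uptau^1_b<\uptau^1_{b-c}$ provided by Lemma~\ref{lem:AsymptoticsrootsIkj}: in the regime $0\le \tau<\uptau^1_\Xi$, equivalently $\tauk<\uptau^1_{b+c}$, one has simultaneously $b^1>0$, $b^1+c^1>0$ and $b^1-c^1>0$. Therefore along $\Xi$ both critical eigenvalues are strictly negative of order $s^2$, which combined with the uniform spectral gap on the rest of the spectrum yields $\Sigma(\mathrm{L}^\Xi(s))\subset \dC_-$ for $0<|s|<\delta_2$; along $\Lambda$ the eigenvalue $(b^1-c^1)s^2+o(s^2)$ is strictly positive, giving the instability $\Sigma(\mathrm{L}^\Lambda(s))\cap \dC_+\neq \emptyset$.

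The main obstacle is the careful application of the reduced stability principle itself: one must verify that $\Normpsiphi>0$ so that the positive factor identifying reduced eigenvalues with critical eigenvalues does not flip signs (this follows from Lemma~\ref{lem:phipsiIntegral} for $\sigma_\theta$ small), and confirm that the $o(s^2)$ remainders truly do not dominate the leading $s^2$ terms --- which is where the \emph{strict} inequalities $b^1\pm c^1\neq 0$ in the chosen $\tau$-regime are essential. The persistence of the spectral gap for the non-critical part under a $C^1$-small perturbation of the linearized operator (coming from $\|f^\bullet_s\|_{H^2_0}=O(s)$) is a standard consequence of Kato's perturbation theory for sectorial operators and does not require further work.
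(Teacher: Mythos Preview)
Your proposal is correct and follows essentially the same route as the paper: apply the Principle of Reduced Stability using Proposition~\ref{prop:SpaceDecompositionLS} (semi-simplicity) and Proposition~\ref{prop:SpectralGapatFirstBif} (spectral gap at $k=1$), compute the $2\times 2$ Jacobian of $\bPhi^1$ along each branch to obtain the matrices $s^2\mathrm{diag}(-2b^1,\,b^1-c^1)$ and $s^2\begin{psmallmatrix}-2b^1&-2c^1\\-2c^1&-2b^1\end{psmallmatrix}$ up to $o(s^2)$, and conclude with the sign information of Proposition~\ref{prop:PositivorNegative}. Your discussion is in fact slightly more careful than the paper's in that you explicitly flag the positivity of $\Normpsiphi$ (needed so the reduced eigenvalues have the same signs as the critical ones) and spell out the ordering $\uptau^1_{b+c}<\uptau^1_b<\uptau^1_{b-c}$ to justify $b^1-c^1>0$ in the regime $\tau<\uptau^1_\Xi$.
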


\begin{proof}
    We first note that the functional framework of this section is consistent with the Principle of Reduced Stability (P.R.S.) \cite{kielhofer1983principle,kielhofer2006bifurcation}. In particular, $\mathrm{0}$ is semi-simple and the space decomposition for the P.R.S. coincides with our Lyapunov--Schmidt functional. The spectral analysis in Proposition~\ref{prop:SpectralGapatFirstBif} shows that, at $k = 1$, the only eigenvalue with positive real part is $0$. Since the operator is sectorial with a compact inverse, its spectrum consists of isolated eigenvalues with no accumulation points except at $-\infty$. Consequently, the local linear stability along the bifurcation curves is governed solely by the variation of the critical eigenvalue $0$, because the rest of the spectrum stays uniformly away from the imaginary axis
    (using that $\B$ is a relatively compact perturbation and the resolvent of $\mathrm{L}$ satisfies a decay estimate).
    By the P.R.S., the expansion of the critical eigenvalue coincides with the eigenvalue of the linearized Lyapunov--Schmidt functional.

    Using the analytic expansions of $\bPhi^1$~\eqref{eq:PhiJetAnalytic} and the solution curves from Theorem~\ref{thm:ExistenceSolCurves}, a straight forward computation leads to,
    \begin{equation*}
        D_y \bPhi^1 ( \zeta^\Lambda(s)) = s^2 \begin{pmatrix}
            -2b^k & 0\\
            0 & b^k - c^k
        \end{pmatrix} + R^{D\Phi, \Lambda}(s)
    \end{equation*}
    and
    \begin{equation*}
        D_y \bPhi^1 ( \zeta^\Xi(s)) = s^2 \begin{pmatrix}
            -2b^k & -2c^k\\
            -2c^k & -2b^k
        \end{pmatrix} + R^{D\Phi, \Xi}(s).
    \end{equation*}
    The spectra of the dominating terms are respectively given by 
    \begin{equation*}
        \left\{\mu^\Lambda_1 = - 2b^k, \mu^\Lambda_2 = b^k- c^k\right\} \text{ and } \left\{\mu^\Xi = - 2(b^k + c^k), \mu^\Lambda_2 = -2(b^k- c^k)\right\}.
    \end{equation*}
    So that, since $\mathrm{0}$ is at most algebraic multiplicity one, Theorem I.18.1 of~\cite{kielhofer2006bifurcation} applies, and the critical eigenvalues of $\mathrm{L}^\Lambda(s)$ and $\mathrm{L}^\Xi(s)$ are respectively given by,
    \begin{equation*}
        s^2\mu^\Lambda_i + o(s^2) \text{ and } s^2\mu^\Xi_i + o(s^2).
    \end{equation*}
    According to Proposition~\ref{prop:PositivorNegative} and the hypothesis on $\tau$, we obtain that $\zeta^\Lambda$ is locally linearly unstable and $\zeta^\Xi$ is locally linearly stable.
\end{proof}

    \section*{Acknowledgement}
    The authors would like to thank Prof. Johannes Zimmer from the Technical University of Munich for hosting the second author at TUM and thereby fostering this collaboration.

    \begingroup

\renewcommand{\thesubsection}{\Alph{subsection}}
\renewcommand{\thethm}{\thesubsection.\arabic{thm}}
\renewcommand{\theprop}{\thesubsection.\arabic{prop}}
\renewcommand{\thelem}{\thesubsection.\arabic{lem}}
\renewcommand{\theequation}{\thesubsection.\arabic{equation}}

\appendix
\section*{Appendix}
\addcontentsline{toc}{section}{Appendix}
\setcounter{equation}{0}


%
%

\subsection{Analysis of Integral Quantities}
\label{sec:AppendixIntegrals}

In this section, we will use the following fact; if $\psi \in L^2_\theta(\dC)$ is such that,
there exists a meromorphic function $\Psi : \dC \to \dC $, such that, $\psi(\theta) = \Psi(e^{i\theta})$, then, from the residue theorem,
\begin{equation*}
    \mathcal{F}\{\psi\}_n = \frac{1}{2\pi}\int_0^{2\pi}\psi(\theta) e^{-in\theta}\dd \theta = \frac{1}{2\pi i }\ointop_{|z| = 1} \Psi(z) z^{-n-1}\dd z = \Residu_{|z|=1}( \Psi(z) z^{-n-1}).
\end{equation*}

%
%
%
%


\subsubsection{Integral Coefficients}

In this section, we compute explicitly the integrals $\Aint^{\vK_1}, \Aint^{\vK_2}$ defined as follows,

\begin{equation*}
    \Aint^{\vK_j}= -4\pi^2 \int \fX^{\vK_1}\fW^{\vK_j} \dd \theta,
\end{equation*}
where we recall the following definitions,
\begin{align*}
    \fX^{\vK_1} &= (\fB_{\vK_1} \partial_\theta \fV^{-\vK_1}_0 + \fB_{-\vK_1} \partial_\theta \fV^{\vK_1}_0),\\
    \partial_{\theta\theta} \fW^{\vK_i} &= \partial_\theta \fY^{\vK_i},\\
    \fY^{\vK_i}&= (\fB_{\vK_i}\fU^{-\vK_i}_0+ \fB_{-\vK_i}\fU^{\vK_i}_0).
\end{align*}

We use Parseval's identity and the explicit form of the inverse Laplace in Fourier in the $\theta$-variable to obtain closed forms. We start by introducing the Fourier transform in $\theta$ of the functions of interest,

\begin{equation*}
    \mathsf{y}^{\vK_i}_{\omega} \defeq \mathcal{F}_\theta\{\fY^{\vK_i}\}_{\omega},\ \  \mathsf{x}^{\vK_1}_{\omega} \defeq \mathcal{F}_\theta\{\fX^{\vK_i}\}_{\omega},
\end{equation*}

with the Fourier transform given by $\mathcal{F}_\theta\{g(\theta)\}_\omega=\frac{1}{2\pi}\int_0^{2\pi}e^{-i\omega\theta}g(\theta)\dd\theta$. The equation for $\fW^{\vK_i}$ writes in Fourier as,
\begin{align*}
     -\omega^2 \mathsf{w}^{\vK_i}_{\omega}  &=  i \omega\mathsf{y}^{\vK_i}_{\omega}, \forall \omega \in \dZ/\{0\}\\
     \mathsf{w}^{\vK_i}_{0} &= 0,
\end{align*}

by definition of the inverse Laplacian over the $L^2$-functions with average $0$.

From Parseval's identity, we rewrite the integrals as the following series, 
\begin{equation*}
     \Aint^{\vK_j}  =  -8\pi^3 \sum_{\omega \in \dZ} \mathsf{w}^{\vK_j}_{\omega}\mathsf{x}^{\vK_1}_{-\omega} =  8\pi^3 \sum_{\omega \in \dZ/\{0\}} \frac{i}{\omega}\mathsf{y}^{\vK_j}_{\omega}\mathsf{x}^{\vK_1}_{-\omega} = 16\pi^3 \sum_{1\leq\omega} \Real \left(\frac{i}{\omega}\mathsf{y}^{\vK_j}_{\omega}\mathsf{x}^{\vK_1}_{-\omega}\right).
\end{equation*}
Where we used that $\fX^{\vK_1}, \fY^{\vK_i}$ and $\fW^{\vK_i}$ are real and thus their opposite Fourier modes are complex conjugated. We finally obtain, that,
\begin{equation}
    \label{eq:IntkjSeriesform}
    \Aint^{\vK_j}= 16\pi^3 \sum_{1\leq\omega} \frac{1}{\omega} \Imag \left(\overline{\mathsf{y}}^{\vK_j}_{\omega}\mathsf{x}^{\vK_1}_{\omega}\right).
\end{equation}

We also note, from Proposition~\ref{prop:RotationUKukomega} that,
\begin{equation}
    \fY^{\vK_2}_{\sigma_\theta} = \fY^{\vK_1}_{\sigma_\theta}\left(\cdot - \pi/2\right),
\end{equation}
for every $\sigma_\theta\geq 0$, and in particular for $\sigma_\theta = 0$. So that we have the following relation in Fourier space,

\begin{equation}
    \label{eq:appendixrelationyk1yk2}
    \mathsf{y}^{\vK_2}_{n} = e^{-i\frac{n}{2}\pi}\mathsf{y}^{\vK_1}_{n}= (-1)^{\frac{n}{2}}\mathsf{y}^{\vK_1}_{n}.
\end{equation}

We thus only need to compute the Fourier coefficients of $ \fY^{\vK_1}$. We start by developing $\fX^{\vK_j}$ and $\fY^{\vK_j}$ as the resolvent $\fOpR^{\vK_j}_{0}$ is explicit at $\sigma_\theta = 0$, this leads to,

\begin{align*}
    \fX^{\vK_j} & = (\fB_{\vK_j} \partial_\theta \fV^{-\vK_j}_{0} + \fB_{-\vK_j} \partial_\theta \fV^{\vK_j}_{0}) = -\left( \frac{ \fB_{\vK_j} \partial_\theta \fM_{\vK_j}}{(\fM_{\vK_j})^2} +  \frac{\fB_{-\vK_j} \partial_\theta \fM_{-\vK_j}}{(\fM_{-\vK_j})^2}\right) \\ 
    & = \frac{-2 \Real ( \fB_{\vK_j} \partial_\theta \fM_{\vK_j} (\fM_{-\vK_j})^2 )}{|\fM_{\vK_j}|^4}  \eqdef \frac{-\fC^{\vK_j}_{\fX}}{|\fM_{\vK_j}|^4}\\
    \fY^{\vK_j} &= (\fB_{\vK_j}\fU^{-\vK_j}_{0} + \fB_{-\vK_j}\fU^{\vK_j}_{0})= -\left(\frac{\fB_{\vK_j} \partial_\theta \fB_{-\vK_j}}{\fM_{-\vK_j}} +  \frac{ \fB_{-\vK_j} \partial_\theta \fB_{\vK_j}}{\fM_{\vK_j}}\right)   \\ 
    &= \frac{-2\Real(\fB_{\vK_j} \partial_\theta \fB_{-\vK_j} \fM_{\vK_j} )}{|\fM_{\vK_j}|^2} \eqdef \frac{-\fC^{\vK_j}_{\fY}}{|\fM_{\vK_j}|^2}
\end{align*}


We then decompose the functions as products and introduce their Fourier modes as,
\begin{align}
    \label{eq:dM2dM4defprod}
    \mathsf{d}^{\vK_j,2}_{n} &\defeq \mathcal{F}_\theta\left\{|\fM_{\vK_j}|^{-2}\right\}_{n},\hspace{-7em}
    & \mathsf{d}^{\vK_j,4}_{n} &\defeq \mathcal{F}_\theta\left\{|\fM_{\vK_j}|^{-4}\right\}_{n}, \\
    \label{eq:CYjCXjdefprod}
    \mathsf{c}^{\fY_j}_n &\defeq \mathcal{F}_\theta\left\{\fC^{\vK_j}_{\fY}\right\}_{n},\hspace{-7em}
    & \mathsf{c}^{\fX_j}_n &\defeq \mathcal{F}_\theta\left\{\fC^{\vK_j}_{\fX}\right\}_{n}.
\end{align}

The Fourier modes of the products is given by the Cauchy Products,
\begin{equation}
    \label{eq:Appynxnconvdef}
    \mathsf{y}^{\vK_j}_{n} = -\sum_{l\in \dZ} \mathsf{c}^{\fY_j}_l \mathsf{d}^{\vK_j,2}_{n-l},\; \; \mathsf{x}^{\vK_j}_{n} =  -\sum_{l\in \dZ} \mathsf{c}^{\fX_j}_l \mathsf{d}^{\vK_j,4}_{n-l}.
\end{equation}

In the following lemmas, we give explicitly the Fourier coefficients \eqref{eq:dM2dM4defprod} and \eqref{eq:Appynxnconvdef}.

\begin{lem}
    \label{lem:MhatmdFourier}
    The Fourier coefficients $ \mathsf{d}^{\vK_1,2}_{n},  \mathsf{d}^{\vK_1,4}_{n}$ defined in~\eqref{eq:dM2dM4defprod} admit the following explicit forms,
    \begin{align}
        \mathsf{d}^{\vK_1,2}_{n} &= \frac{\Aek}{\lmbk^2} |\Azink|^{|n|/2}\begin{cases}
            2(-1)^{n/2} \text{ if $n$ is even},\\
            0 \text{ if $n$ is odd}.
        \end{cases}\\
        \mathsf{d}^{\vK_1,4}_{n} &= (|n| + \Afk)\frac{\Aek^2}{\lmbk^4} |\Azink|^{|n|/2}\begin{cases}
            2(-1)^{n/2} \text{ if $n$ is even},\\
            0 \text{ if $n$ is odd},
        \end{cases}
    \end{align}
    where $\Azink$, $\Afk$ and $\Aek$ are defined as,
    { \small
    \begin{equation}
        \Azink = -(2\sigk^2+1) + \sqrt{(2\sigk^2+1)^2-1},\;
        \Afk =
    2\left(\frac{1+|\Azink|^2}{1-|\Azink|^2}\right), \; \Aek = \frac{1}{\sqrt{(2\sigk^2+1)^2-1}}.
    \end{equation}}
\end{lem}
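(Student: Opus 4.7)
The plan is to compute both Fourier series by rewriting $|\fM_{\vK_1}|^2$ as a rational function in $z=e^{i\theta}$ and then expanding in convergent Laurent series on $|z|=1$. Using $\fM_{\vK_1}(\theta) = \lmbk(\sigk + i\cos\theta)$ and $\cos\theta = (z+z^{-1})/2$, I write
\[
|\fM_{\vK_1}|^2 \;=\; \lmbk^2\bigl(\sigk^2 + \cos^2\theta\bigr) \;=\; \frac{\lmbk^2}{4z^2}\bigl[z^4 + (2+4\sigk^2)z^2 + 1\bigr].
\]
The quartic in $z$ is biquadratic in $w=z^2$ with roots $\Azink$ and $1/\Azink$ satisfying $\Azink+1/\Azink = -(2+4\sigk^2)$ and $\Azink/(1/\Azink)=1$; comparison with \eqref{eq:defekzink} identifies these as the two negative real reciprocal roots, with $|\Azink|<1<|1/\Azink|$ and $1/\Azink - \Azink = -2/\Aek$. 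This factorization gives $|\fM_{\vK_1}|^2 = \frac{\lmbk^2}{4z^2}(z^2-\Azink)(z^2-1/\Azink)$.

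Next I perform partial fractions in $w=z^2$, using $\Azink - 1/\Azink = 2/\Aek$:
\[
\frac{1}{|\fM_{\vK_1}|^2} \;=\; \frac{2\Aek}{\lmbk^2}\left[\frac{\Azink}{z^2-\Azink} - \frac{1/\Azink}{z^2-1/\Azink}\right].
\]
Since $|\Azink|<1<|1/\Azink|$, on $|z|=1$ each factor admits a convergent geometric expansion,
\[
\frac{\Azink}{z^2-\Azink} = \sum_{k\geq 1}\Azink^{k} z^{-2k}, \qquad \frac{-1/\Azink}{z^2-1/\Azink} = \sum_{k\geq 0}\Azink^{k} z^{2k}.
\]
Adding and using $z^{2k}+z^{-2k}=2\cos(2k\theta)$, and noting that $\Azink^k = (-1)^k|\Azink|^k$, one reads off that $\mathsf{d}^{\vK_1,2}_n$ vanishes for $n$ odd (which also follows from the $\pi$-periodicity of $|\fM_{\vK_1}|^2$) and equals $\tfrac{2\Aek}{\lmbk^2}(-1)^{n/2}|\Azink|^{|n|/2}$ for $n$ even, giving the first formula.

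For the second formula I use $\mathsf{d}^{\vK_1,4} = \mathsf{d}^{\vK_1,2}\ast \mathsf{d}^{\vK_1,2}$ (the discrete convolution since $|\fM_{\vK_1}|^{-4}=(|\fM_{\vK_1}|^{-2})^2$). Only even modes contribute, and for $n=2m\geq 0$,
\[
\mathsf{d}^{\vK_1,4}_{2m} = \frac{4\Aek^2(-1)^m}{\lmbk^4}\,r^m \sum_{\ell\in\dZ} r^{|\ell|+|m-\ell|-m},\qquad r=|\Azink|,
\]
after pulling out the common factors. Splitting the sum by $\ell<0$, $0\leq \ell\leq m$, and $\ell>m$ yields $(m+1) + \frac{2r^2}{1-r^2} = \frac{(m+1)(1-r^2)+2r^2}{1-r^2}$, and an elementary rearrangement converts $2m + \tfrac{2(1+r^2)}{1-r^2}= |n|+\Afk$ into exactly twice this expression divided by $(1-r^2)$. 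This yields $\mathsf{d}^{\vK_1,4}_n = \tfrac{2\Aek^2}{\lmbk^4}(|n|+\Afk)(-1)^{n/2}|\Azink|^{|n|/2}$ for even $n$, and zero for odd $n$, as claimed.

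The only step requiring care is the convolution bookkeeping in Step 3: one must evaluate $\sum_{\ell\in\dZ} r^{|\ell|+|m-\ell|}$ without sign errors and then recognize the resulting rational expression as $(|n|+\Afk)/(1-r^2)$ using the definition of $\Afk$ in \eqref{eq:defekzink}. Everything else is a direct application of the residue/Laurent expansion mentioned at the start of the appendix; the negativity of $\Azink$ accounts for the $(-1)^{n/2}$ phase factor and the $\pi$-periodicity of $|\fM_{\vK_1}|^2$ automatically kills all odd Fourier modes.
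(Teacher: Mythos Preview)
Your proof is correct and follows a genuinely different route from the paper's. The paper computes both $\mathsf{d}^{\vK_1,2}_n$ and $\mathsf{d}^{\vK_1,4}_n$ by the residue theorem: it writes each Fourier coefficient as a contour integral, identifies the poles of $z^{n-1}/|\fM_{\vK_1}|^2$ and $z^{n-1}/|\fM_{\vK_1}|^4$ inside the unit circle (simple and double, respectively, at $\pm i\sqrt{|\Azink|}$), and evaluates the residues directly. You instead factor $|\fM_{\vK_1}|^2$, perform a partial-fraction decomposition in $w=z^2$, expand each piece as a convergent geometric series on $|z|=1$, and then obtain $\mathsf{d}^{\vK_1,4}$ by convolving $\mathsf{d}^{\vK_1,2}$ with itself. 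The advantage of your approach is that it bypasses the somewhat tedious computation of residues at second-order poles; the price is the bookkeeping in the convolution sum $\sum_{\ell\in\dZ} r^{|\ell|+|m-\ell|}$, which you handle correctly. One minor point: you refer to $\Afk$ as being in \eqref{eq:defekzink}, but it is in fact introduced only in the statement of the lemma itself.
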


\begin{proof}
    Let $ n \geq 0 $,
    \begin{equation*}    
        \mathsf{d}^{\vK_1,2}_{-n} = \frac{1}{2\pi} \int \frac{e^{i n \theta}}{|\fM_{\vK_1}|^{2}} \dd \theta = \frac{-i}{2\pi} \ointop_{|z| = 1} f(z) z^{n-1} dz = \Residu_{|z| = 1}(f(z) z^{n-1}).
    \end{equation*}
    
    with,
    \begin{align*}
        f(z) & =  \frac{4z^{2}}{\lmbk^2 (z - i \sqrt{|\Azink|})(z + i \sqrt{|\Azink|})(z^2 - \Azoutk)},
    \end{align*}

    
    
    where,
    \begin{align}
        \Azink =& -(2\sigk^2+1) + \sqrt{(2\sigk^2+1)^2-1},\\
        \Azoutk =& -(2\sigk^2+1) - \sqrt{(2\sigk^2+1)^2-1}.
    \end{align}
    
    So that, the following identity holds,
    \begin{equation*}
        |\fM_{\vK_1}(\theta)|^{-2} = f\left(e^{i\theta}\right).
    \end{equation*}
    
    Applying the Residue theorem to each of the two simple poles of $f(z)z^{n-1}$ inside the unit circle, this leads to,
    \begin{align*}
        \Residu_{\UnitCircleResThm}(f^1(z) z^{n-1}) &= \frac{\Aek}{\lmbk^2}|\Azink|^{n/2}[(i)^n + (-i)^n]
    \end{align*}
    with, 
    \begin{equation*}
        \Aek \defeq \frac{2}{(\Azink - \Azoutk)} = \frac{1}{\sqrt{(2\sigk^2+1)^2-1}}.
    \end{equation*}
    
    Using the mode symmetry for the positive modes, as they are associated with a real-valued function, we obtain,
    
    \begin{equation*}
        \mathsf{d}^{\vK_1,2}_{n} = \frac{\Aek}{\lmbk^2} |\Azink|^{|n|/2}\begin{cases}
            2(-1)^{n/2} \text{ if $n$ is even},\\
            0 \text{ if $n$ is odd}.
        \end{cases}
    \end{equation*}
    Similarly, for $\mathsf{d}^{\vK_1,4}_{n}$, we start with the negative modes.
    
    Let $n\geq 0$, and consider,
    \begin{align*}
        \mathsf{d}^{\vK_1,4}_{-n} &= \frac{1}{2\pi} \int \frac{e^{i n \theta}}{|\fM_{\vK_1}|^{4}} \dd \theta = \frac{-i}{2\pi} \ointop_{|z| = 1 } g(z) z^{n-1} dz = \Residu_{|z| = 1 }(g(z) z^{n-1}).
    \end{align*}
    
    where,
    \begin{equation*}
        g(z) = \frac{16z^{4}}{\lmbk^4 (z - i \sqrt{|\Azink|})^2(z + i \sqrt{|\Azink|})^2(z^2 - \Azoutk)^2},
    \end{equation*}
    
    so that $g(e^{i\theta}) = |\fM_{\vK_1}(\theta)|^{-4}$.
    
    Applying the Residue theorem, on the two poles of order two inside the unit circle for negative modes, and using the mode symmetry for positive modes, we obtain,
    
    \begin{equation*}
        \mathsf{d}^{\vK_1,4}_{n} = \Residu_{\UnitCircleResThm}(g(z) z^{n-1}) = (|n| + \Afk)\frac{\Aek^2}{\lmbk^4} |\Azink|^{|n|/2}\begin{cases}
            2(-1)^{n/2} \text{ if $n$ is even},\\
            0 \text{ if $n$ is odd},
        \end{cases}
    \end{equation*}
    
    with,
    \begin{equation*}
        \Afk = 
        2\left(\frac{1+|\Azink|^2}{1-|\Azink|^2}\right).
    \end{equation*}
    
\end{proof}

\begin{lem}
    \label{lem:xyfourier}
    The Fourier modes $\Ay^{\vK_j}_n, \Ax^{\vK_1}_n$ of $\fY^{\vK_j}_{0} , \fX^{\vK_1}_{0}$ admit the following explicit forms, for any $n \geq 4$ even, we have that,

    \begin{align}
        \Ax^{\vK_1}_{n} & = -\frac{\pi k }{2\cEll\lmbk} \Aek^2(-1)^{\frac{n}{2}}|\Azink|^{\frac{n-4}{2}}(\mathsf{a}^{\fX}_k n + 2(1-|\Azink|^2)^{-1}\mathsf{b}^{\fX}_k),\\
        \Ay^{\vK_1}_{n} & = -i\frac{2\pi^2k^2 }{|\cEll|^2\lmbk} \Aek (1-|\Azink|^2)(-1)^{\frac{n}{2}}|\Azink|^{\frac{n-4}{2}} \mathsf{a}^{\fY}_k,\\
        \Ay^{\vK_2}_{n} & = -i\frac{2\pi^2k^2 }{|\cEll|^2\lmbk} \Aek (1-|\Azink|^2)|\Azink|^{\frac{n-4}{2}} \mathsf{a}^{\fY}_k,
    \end{align}

    and for $n = 2$, 

    \begin{align}
        \Ax^{\vK_1}_{n} & = -\frac{\pi k }{2\cEll\lmbk} \Aek^2(-1)^{\frac{n}{2}}|\Azink|^{\frac{n-4}{2}}(\mathsf{a}^{\fX}_k n + 2(1-|\Azink|^2)^{-1}\mathsf{b}^{\fX}_k),\\
        \Ay^{\vK_1}_{n} & = -i\frac{2\pi^2k^2 }{|\cEll|^2\lmbk} \Aek (1-|\Azink|^2)(-1)^{\frac{n}{2}}|\Azink|^{\frac{n-4}{2}} \mathsf{a}^{\fY}_k,\\
        \Ay^{\vK_2}_{2} &= i\frac{2\pi^2k^2 }{|\cEll|^2\lmbk}\Aek(1-|\Azink|^2) \mathsf{g}^{\mathsf{Y}}_k,
    \end{align}
    
    where $\mathsf{a}^{\fX}_k, \mathsf{b}^{\fX}_k , \mathsf{g}^{\fX}_k , \mathsf{a}^{\fY}_k, \mathsf{g}^{\fY}_k$ are second order polynomials in $\tauk$, and their respective coefficients are given in Table~\ref{tb:Intaaggbterms}. The negative modes are obtained by complex conjugation, and the odd modes are zero.
\end{lem}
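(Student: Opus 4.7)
The plan is to evaluate the convolutions~\eqref{eq:Appynxnconvdef},
\begin{equation*}
\Ay^{\vK_j}_n = -\sum_{l\in\dZ}\mathsf{c}^{\fY_j}_l\mathsf{d}^{\vK_j,2}_{n-l},\qquad \Ax^{\vK_1}_n = -\sum_{l\in\dZ}\mathsf{c}^{\fX_1}_l\mathsf{d}^{\vK_1,4}_{n-l},
\end{equation*}
using that the $\mathsf{d}$-coefficients are given explicitly by Lemma~\ref{lem:MhatmdFourier} while the $\mathsf{c}$-coefficients have small finite support, so each sum collapses to at most five nonzero terms.

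First, I would determine the Fourier support of $\fC^{\vK_j}_{\fY}$ and $\fC^{\vK_j}_{\fX}$. Reading off Table~\ref{tb:FourierMultipliers}, $\fB_{\vK_1}$ lives in modes $\{\pm 1,\pm 2\}$, $\partial_\theta\fB_{-\vK_1}$ in $\{\pm 1,\pm 2\}$, $\fM_{\vK_1}$ in $\{0,\pm 1\}$, and $\partial_\theta\fM_{\vK_1}$ in $\{\pm 1\}$, so the products $\fB_{\vK_1}\partial_\theta\fB_{-\vK_1}\fM_{\vK_1}$ and $\fB_{\vK_1}\partial_\theta\fM_{\vK_1}(\fM_{-\vK_1})^2$ lie in modes $\{-5,\dots,5\}$. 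A quick $\pi$-translation check shows that each of these products picks up an overall complex conjugation under $\theta\mapsto\theta+\pi$ (each factor flips a sign of the imaginary part, and the three flips compose to conjugation), so they are invariant under $\theta\mapsto\theta+\pi$ after applying $2\Real$. This forces $\mathsf{c}^{\fY_1}_l,\mathsf{c}^{\fX_1}_l$ to vanish at odd $l$, so the supports collapse to $\{-4,-2,0,2,4\}$, with entries that are explicitly quadratic polynomials in $\tauk$. The $\vK_2$-coefficients then satisfy $\mathsf{c}^{\fY_2}_l = (-1)^{l/2}\mathsf{c}^{\fY_1}_l$ via the rotation identity $\fC^{\vK_2}_{\fY}(\theta)=\fC^{\vK_1}_{\fY}(\theta-\pi/2)$ from Proposition~\ref{prop:RotationUKukomega}.

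Second, for the bulk regime $n\ge 4$, every index $n-l$ with $l\in\{-4,\dots,4\}$ is nonnegative and even, so Lemma~\ref{lem:MhatmdFourier} applies directly without any sign flip, and each term contributes a common geometric factor $(-1)^{(n-l)/2}|\Azink|^{(n-l)/2}$. Factoring $(-1)^{n/2}|\Azink|^{(n-4)/2}$ reduces the two sums to finite polynomial combinations of the $\mathsf{c}$-coefficients; after simplification using the defining quadratic $\Azink^2+2(2\sigk^2+1)\Azink+1=0$ and the identity $\Afk(1-|\Azink|^2)=2(1+|\Azink|^2)$, the coefficients match the entries $\mathsf{a}^{\mathsf{Y}}_k$ and $\mathsf{a}^{\mathsf{X}}_k,\mathsf{b}^{\mathsf{X}}_k$ of Table~\ref{tb:Intaaggbterms}. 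The linear-in-$n$ piece of $\Ax^{\vK_1}_n$ arises precisely from the factor $|n-l|+\Afk$ in $\mathsf{d}^{\vK_1,4}_{n-l}$, cleanly separating into the $\mathsf{a}^{\mathsf{X}}_k n$ part and a constant-in-$n$ part absorbed into $\mathsf{b}^{\mathsf{X}}_k$. For $\Ay^{\vK_2}_n$ with $n\ge 4$, the identity $\Ay^{\vK_2}_n=(-1)^{n/2}\Ay^{\vK_1}_n$ from~\eqref{eq:appendixrelationyk1yk2} removes the $(-1)^{n/2}$ factor, yielding the stated form. Vanishing of odd Fourier modes follows because both factors in each convolution are supported on even indices, and negative modes follow by complex conjugation from the reality of $\fY^{\vK_j},\fX^{\vK_1}$.

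The main obstacle is the boundary case $n=2$: here the extreme index $l=4$ in the support of $\mathsf{c}$ produces $n-l=-2<0$, so $\mathsf{d}^{\vK_1,2}_{-2}=\mathsf{d}^{\vK_1,2}_{2}$ carries the power $|\Azink|^{1}$ rather than the analytic continuation $|\Azink|^{-1}$ that appears in the $n\ge 4$ expression. For $\Ax^{\vK_1}_2$ and $\Ay^{\vK_1}_2$ a direct evaluation shows the boundary contribution recombines so that the $n\ge 4$ closed form extends verbatim; but for $\Ay^{\vK_2}_2$ the sign $\mathsf{c}^{\fY_2}_4=(-1)^{2}\mathsf{c}^{\fY_1}_4=+\mathsf{c}^{\fY_1}_4$ prevents the cancellation that occurs for $\vK_1$, and reassembling the boundary term with the bulk yields the distinct polynomial $\mathsf{g}^{\mathsf{Y}}_k$ tabulated in Table~\ref{tb:Intaaggbterms}. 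The whole argument is therefore not conceptually hard but combinatorially delicate: the work lies in organizing the five-term convolutions, separating bulk from boundary, and matching the resulting expressions line-by-line against the polynomials of Table~\ref{tb:Intaaggbterms}.
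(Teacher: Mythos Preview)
Your proposal is correct and follows essentially the same route as the paper: compute the finitely-supported Fourier coefficients $\mathsf{c}^{\fY_1}_l,\mathsf{c}^{\fX_1}_l$ from the explicit trigonometric expressions in Table~\ref{tb:FourierMultipliers}, then convolve with the $\mathsf{d}^{\vK_1,2}_n,\mathsf{d}^{\vK_1,4}_n$ of Lemma~\ref{lem:MhatmdFourier} via~\eqref{eq:Appynxnconvdef}. The paper is terser---it tabulates the renormalized $\tilde{\mathsf{c}}$-coefficients directly (in particular noting $\tilde{\mathsf{c}}^{\fY_1}_0=0$) and then says the Cauchy product ``leads to the required result''---whereas you supply the parity argument for even support, the bulk/boundary split at $n=2$, and the mechanism by which $\Ay^{\vK_2}_2$ alone acquires the exceptional polynomial $\mathsf{g}^{\fY}_k$; but the underlying computation is identical.
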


\begin{proof}
    Using the definition in Table~\ref{tb:FourierMultipliers}, in the form 
    \eqref{eq:CYjCXjdefprod}, with the following renormalizations,
    \begin{equation}
        \Ac^{\mathsf{Y}_j}_{n} = i\frac{\pi^2k^2 \lmbk}{|\cEll|^2} \Tilde{\Ac}^{\mathsf{Y}_j}_{n}, \ \ \ \Ac^{\mathsf{X}_1}_{n} = \frac{\pi k \lmbk^3}{4\cEll} \Tilde{\Ac}^{\mathsf{X}_1}_{n},
    \end{equation}
    from explicit computation we obtain,
    \begin{center}
        \begin{equation*}
            \begin{array}{|c||c|c|c|}
                \hline
                 & -4 & -2 & 0 \\ \hline
                \Tilde{\Ac}^{\mathsf{Y}_1}_{n} & (\sigk\tauk^2+\tauk/2) &(-\tauk+2\sigk) & 0
                \\[2mm]\hline
                \Tilde{\Ac}^{\mathsf{X}_1}_{n}& 2\sigk\tauk-1 & 4\sigk^2 & -4\sigk\tauk+2-8\sigk^2\\
            \hline
            \end{array}
        \end{equation*}
    \end{center}
    
    Where we only specify the negative non-vanishing modes, the modes $4$ and $2$ are obtained as the complex conjugates, as the functions are real, and the other modes are zero. Then applying Cauchy product formula as in~\eqref{eq:Appynxnconvdef}, with the explicit form obtained in Lemma~\ref{lem:MhatmdFourier}, leads to the required result.
\end{proof}

Summing up the above lemmas~\ref{lem:MhatmdFourier}-\ref{lem:xyfourier} in formula~\eqref{eq:IntkjSeriesform} leads to the stated result in Proposition~\ref{prop:IntkjExplicitForms}.




\subsubsection{Projection Operator}

\begin{lem}
    \label{lem:phipsiIntegral}
    The integral $\langle \psi^{\vK_1}, \phi^{\vK_1}\rangle$ admits the following explicit form,
    \begin{equation*}
        \langle \psi^{\vK_1}, \phi^{\vK_1}\rangle  = \frac{8\pi^2 k \Aek^2 }{\cEll\lmbk^2}\left(p^k_1 \tauk + p^k_0\right).
    \end{equation*}
    where,
    \begin{align*}
        p^k_1 & =  1+|\Azink|^2 - \frac{4|\Azink|}{1+|\Azink|} + \frac{8\sigk^2|\Azink|}{1-|\Azink|^2} + \underset{\sigma_\theta \to 0}{o}(1),\\
        p^k_0 & = 4\sigk \frac{1-|\Azink|}{1+|\Azink|} + \underset{ \sigma_\theta \to 0}{o}(1).
    \end{align*}
\end{lem}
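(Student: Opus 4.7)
The first step is to convert the $L^2_0$--inner product to a one--variable integral in $\theta$. By Parseval's identity and the Fourier-mode definitions of $\phi^{\vK_1}$ and $\psi^{\vK_1}$ (which are supported only on modes $\pm\vK_1$), together with the conjugation relations $(\fU^{\vK_1}_{\sigma_\theta})^*=\fU^{-\vK_1}_{\sigma_\theta}$ and $(\fV^{\vK_1}_{\sigma_\theta})^*=\fV^{-\vK_1}_{\sigma_\theta}$, the inner product reduces to
\begin{equation*}
\langle \psi^{\vK_1},\phi^{\vK_1}\rangle
= 2\,\Real \int_0^{2\pi} \fV^{-\vK_1}_{\sigma_\theta}(\theta)\,\fU^{\vK_1}_{\sigma_\theta}(\theta)\,\dd\theta.
\end{equation*}

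Second, to isolate the leading order as $\sigma_\theta\to 0^+$, I invoke the resolvent estimate \eqref{est:ResolventContinuity} of Proposition~\ref{prop:ResolventProperties}: since $\partial_\theta \fB_{\vK_1}$ and the constant $1$ are smooth, the functions $\fU^{\vK_1}_{\sigma_\theta}$ and $\fV^{-\vK_1}_{\sigma_\theta}$ converge in $L^2_\theta(\dC)$ to their inviscid counterparts $\fU^{\vK_1}_{0}=-\partial_\theta\fB_{\vK_1}/\fM_{\vK_1}$ and $\fV^{-\vK_1}_{0}=1/\fM_{\vK_1}$ at rate $\mathcal{O}(\sigma_\theta)$. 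By the Cauchy--Schwarz inequality this upgrades to $\int \fV^{-\vK_1}_{\sigma_\theta}\fU^{\vK_1}_{\sigma_\theta}\dd\theta = \int \fV^{-\vK_1}_{0}\fU^{\vK_1}_{0}\dd\theta + o(1)$ as $\sigma_\theta\to 0^+$, which accounts for the $o(1)$ remainders asserted in the statement.

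Third, I compute the inviscid integral
\begin{equation*}
-2\,\Real\int_0^{2\pi} \frac{\partial_\theta \fB_{\vK_1}(\theta)}{\fM_{\vK_1}(\theta)^{2}}\,\dd\theta
\end{equation*}
explicitly via residues. Using Table~\ref{tb:FourierMultipliers}, $\partial_\theta\fB_{\vK_1}$ splits into a $\tauk^0$--piece ($-i\cos\theta$, up to the prefactor $2\pi k/\cEll$) and a $\tauk^1$--piece ($\tauk\cos 2\theta$), so the integral is linear in $\tauk$; this accounts for the polynomial structure $p^k_1\tauk+p^k_0$. Substituting $z=e^{i\theta}$, one has
\begin{equation*}
\fM_{\vK_1}(\theta) = \frac{i\lmbk}{2z}\bigl(z^2 - 2i\sigk z + 1\bigr),
\end{equation*}
whose relevant root inside $|z|=1$ is $z_\star = -i(\sqrt{\sigk^2+1}-\sigk)$, satisfying $|z_\star|^2=|\Azink|$. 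The integrand has a double pole at $z_\star$ (the factor $1/\fM_{\vK_1}^2$), and a residue computation of the type already carried out in Lemma~\ref{lem:MhatmdFourier} yields the $\tauk$--coefficients in the precise form of $p^k_1$ and $p^k_0$ by grouping the algebraic expressions in $|\Azink|$ and $\Aek = ((2\sigk^2+1)^2-1)^{-1/2}$.

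\textbf{Main obstacle.} The substantive step is the third one: while the residue calculation is routine in principle, matching its output to the specific presentation involving $|\Azink|$, the factor $\frac{1}{1-|\Azink|^2}$ and the coefficient $\frac{4|\Azink|}{1+|\Azink|}$ requires careful algebraic simplification using the identities $\Azink = -(\sqrt{\sigk^2+1}-\sigk)^2$ and $\Aek = 1/(2\sigk\sqrt{\sigk^2+1})$. The final positivity claim then follows because, for small $\sigma_\theta$, continuity in $\sigma_\theta$ together with the explicit inviscid expression show that $p^k_0$ is strictly positive (recall $0<|\Azink|<1$ for $\sigk>0$), so the whole bracket $p^k_1\tauk + p^k_0$ remains positive for $\tauk\geq 0$ and $\sigma_\theta$ small enough.
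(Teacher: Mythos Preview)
Your proposal is correct and follows essentially the same route as the paper: reduce $\langle\psi^{\vK_1},\phi^{\vK_1}\rangle$ to the $\theta$--integral $2\,\Real\int \fV^{-\vK_1}_{\sigma_\theta}\fU^{\vK_1}_{\sigma_\theta}\dd\theta$ via Parseval in $x$, pass to $\sigma_\theta=0$ by the resolvent continuity of Proposition~\ref{prop:ResolventProperties}, and evaluate the inviscid integral $-2\,\Real\int \partial_\theta\fB_{\vK_1}/\fM_{\vK_1}^2\,\dd\theta$ by residues. The only organizational difference is that the paper first combines the two conjugate terms over the real denominator $|\fM_{\vK_1}|^4$ and then applies Parseval in $\theta$ to express the result as the finite Cauchy product $-4\pi\sum_n \mathsf{d}^{\vK_1,4}_n\,\mathsf{m}^{\phi\psi}_{-n}$, thereby reusing the $\mathsf{d}^{\vK_1,4}_n$ already computed in Lemma~\ref{lem:MhatmdFourier}; your direct residue at the double pole of $\fM_{\vK_1}^{-2}$ is an equivalent calculation.

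One remark on your closing sentence (which strictly speaking lies outside the statement of the lemma): knowing only $p^k_0>0$ does not by itself give $p^k_1\tauk+p^k_0>0$ for all $\tauk\ge 0$; you also need $p^k_1\ge 0$. This does hold---using $\sigk^2=(1-|\Azink|)^2/(4|\Azink|)$ one finds $p^k_1=\frac{(1-|\Azink|)^2(|\Azink|+3)}{1+|\Azink|}>0$---but it should be checked rather than assumed.
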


\begin{proof}
    Using the Fourier product for the $\x$-variable we obtain,
    \begin{equation*}    
        \langle \psi^{\vK_1}, \phi^{\vK_1}\rangle = \int (\fU^{\vK_1}_{\sigma_\theta} \fV^{-\vK_1}_{\sigma_\theta} + \fU^{-\vK_1}_{\sigma_\theta} \fV^{\vK_1}_{\sigma_\theta} )\dd \theta.
    \end{equation*}
    Expanding the integral at $\sigma_\theta = 0$, as the resolvent $\fOpR^l_0$ is explicit, we obtain, 
    \begin{align*}
        \langle \psi^{\vK_1}_0, \phi^{\vK_1}_0\rangle &= \int (\fU^{\vK_1}_{0} \fV^{-\vK_1}_{0} + \fU^{-\vK_1}_{0} \fV^{\vK_1}_{0} )\dd \theta = \int (\fU^{\vK_1}_{0} \fV^{-\vK_1}_{0} + \fU^{-\vK_1}_{0} \fV^{\vK_1}_{0} )\dd \theta, \\
        &=  \int\left(\frac{-\partial_\theta \fB_{\vK_1}}{(\fM_{\vK_1})^2}+\frac{-\partial_\theta \fB_{-\vK_1}}{(\fM_{-\vK_1})^2}\right)\dd \theta,\\
        &=  -\int\left(\frac{\partial_\theta \fB_{\vK_1}(\fM_{-\vK_1})^2+\partial_\theta \fB_{-\vK_1}(\fM_{\vK_1})^2}{|\fM_{\vK_1}|^4}\right)\dd \theta,\\
        &= -4\pi \sum_{n\in \dZ}\mathsf{d}^{\vK_1,4}_n \mathsf{m}_{-n}^{\phi\psi}
    \end{align*}
    with,
    \begin{equation*}
        \mathsf{m}_n^{\phi\psi} \defeq \mathcal{F}_\theta \{ \partial_\theta \fB_{\vK_1}(\fM_{-\vK_1})^2\}_{n}. 
    \end{equation*}

    Using the explicit form of Table~\ref{tb:FourierMultipliers}, and introducing the rescaling,

     \begin{equation*}
        \mathsf{m}_n^{\phi\psi} = \frac{\pi |k|}{2\mathcal{E}^c_\mathbf{k}}\lmbk^2\Tilde{\mathsf{m}}_n^{\phi\psi},
    \end{equation*}
    such that the Fourier modes $\Tilde{\mathsf{m}}_n^{\phi\psi}$ are explicitly given by
    
    \begin{center}
        \begin{equation*}
            \begin{array}{|c||c|c|c|}
                \hline
                 & -4 & -2 & 0 \\ \hline
                \Tilde{\mathsf{m}}_n^{\phi\psi} & -\tauk/2  & -2\sigk -\tauk-2\sigk^2\tauk & -4\sigk-\tauk \\
            \hline
            \end{array}
        \end{equation*}
    \end{center}

    We conclude with the final required form by using the Cauchy product between $\mathsf{d}^{\vK_1,4}_n$ and $\mathsf{m}_{-n}^{\phi\psi}$, with the result of Lemma~\ref{lem:MhatmdFourier}.
    
\end{proof}

%
%

\subsection{Symmetries of the Equation}
\label{sec:AppendixSymetries}
\begin{proof}[Proof of Proposition~\ref{antipodalReflectionCommutations}]
    The product property can be checked directly, similarly one  easily obtains that $\Delta_x (\antipodreflOp f )= \antipodreflOp (\Delta_x f)$ and that $\partial_{\thth}( \antipodreflOp f )= \antipodreflOp (\partial_{\thth} f)$. For the transport term, writing explicitly the change of variable, we obtain,
    \begin{align*}
        (v_\theta \cdot \nabla_x (\antipodreflOp f))(x,\theta) & = - v(\theta) \cdot \nabla_x f (-x,\theta + \pi)\\
        & =  v(\theta + \pi ) \cdot \nabla_x f (-x,\theta + \pi)\\
        & =  (\antipodreflOp (v_\theta \cdot \nabla_x  f))(x,\theta).
    \end{align*}
    Finally, the orientation transport term follows from the product distributivity and the commutation $\antipodreflOp (\partial_\theta f) = \partial_\theta (\antipodreflOp f)$. Only the commutation with the operator $\B$ is left to be checked. By definition of $\B$, we have that,
    \begin{equation*}
        \B[\Tilde{c}] = v^\perp (\theta) \cdot \nabla_x \Tilde{c} + v^\perp (\theta) \cdot \nabla_x^2 \Tilde{c} v(\theta),
    \end{equation*}
    with $\Tilde{c}$ a solution of,
    \begin{equation*}
        \Delta_x \Tilde{c} - \gamma \Tilde{c} + \int \antipodreflOp f \dd \theta = 0.
    \end{equation*}
    Defining $c = \antipodreflOp \Tilde{c}$, we obtain that $c$ is a solution of the equation,
    \begin{equation*}
        \Delta_x c - \gamma c + \int f \dd \theta = 0,
    \end{equation*}
    and that,
    \begin{align*}
        \B[\antipodreflOp c ](x,\theta) & = \B[\Tilde{c}](x,\theta)\\
        & =  - v^\perp (\theta) \cdot \nabla_x c (-x) + v^\perp (\theta) \cdot \nabla_x^2 c(-x) v(\theta)\\
        & =  v^\perp (\theta + \pi) \cdot \nabla_x c (-x) + v^\perp (\theta + \pi) \cdot \nabla_x^2 c(-x) v(\theta + \pi),\\
        & = (\antipodreflOp (\B[c]))(x,\theta).
    \end{align*}
    This concludes the proof.
\end{proof}

\begin{proof}[Proof of Proposition~\ref{prop:SwapOperator}]
The involution property follows from the following direct computation,
\begin{equation*}
    \swapOp\swapOp f(x_1,x_2,\theta)=\swapOp f(-x_2,-x_2,-\theta-\tfrac{\pi}{2})=f(x_1,x_2,\theta).
\end{equation*}
The self-adjointness of $\swapOp$ follows by the change of variable formula; for $f,g \in L^2_0$, we have that, 
\begin{align*}
    \ \int \swapOp f(x_1,x_2,\theta) g(x_1,x_2,\theta)\dd x_1\dd x_2\dd\theta=&\ \int f(-x_2,-x_1,-\theta-\tfrac{\pi}{2})g(x_1,x_2,\theta)\dd x_1\dd x_2\dd\theta,\\
    =& \ \int f(x_1,x_2,\theta)g(-x_2,-x_1,-\theta-\tfrac{\pi}{2})\dd x_1 \dd x_2\dd\theta,\\
    =&\ \int f(x_1,x_2,\theta)\swapOp g(x_1,x_2,\theta)\dd x_1\dd x_2\dd\theta.
\end{align*}
For the commutation property, we expand as follows,
\begin{align*}
    \F(\swapOp f,\chi)=& \ \sigma_x\Delta_x \swapOp f+\sigma_\theta\partial_{\theta\theta} \swapOp f-\lambda v_\theta\cdot\nabla_x \swapOp f-\chi\partial_\theta(\B_\tau[\swapOp f](\tfrac{1}{2\pi}+\swapOp  f)).
\end{align*}
Then, using basic trigonometric identities, we treat each term separately as follows. Firstly, the transport term,
\begin{align*}
    (v_\theta\cdot \nabla_x (\swapOp f)) (x_1,x_2,\theta) = & - \cos(\theta) \partial_{x_2} f(-x_2,-x_1,-\theta-\pi/2),\\
    & - \sin(\theta) \partial_{x_1} f(-x_2,-x_1,-\theta-\pi/2)\\
    = & \  \sin(-\theta - \pi/2) \partial_{x_2} f(-x_2,-x_1,-\theta-\pi/2),\\
    &+ \cos(-\theta - \pi/2)\partial_{x_1} f(-x_2,-x_1,-\theta-\pi/2),\\
    =& \  (\swapOp  (v_\theta\cdot \nabla_x f)) (x_1,x_2,\theta).
\end{align*}
Secondly, we easily verify that, $\swapOp\Delta_x f = \Delta_x \swapOp f, \swapOp\partial_{\thth} f = \partial_{\thth} \swapOp f, \swapOp\partial_{\theta} f = -\partial_{\theta} \swapOp f, \swapOp(\B_\tau[f]f) = (\swapOp \B_\tau[f]) (\swapOp f)$. Thirdly, expanding the $\B$ operator gives,
\begin{align*}
    (\B_\tau [\swapOp f]) (x_1,x_2,\theta) = & \sin(\theta) \partial_{x_2} c(-x_2,-x_1) - \cos(\theta) \partial_{x_1}c(-x_2,-x_1), \\
    &+ \frac{\tau}{2}\sin(2\theta) (\partial_{x_1 x_1}c - \partial_{x_2 x_2}c )(-x_2,-x_1), \\
    &+ \tau \cos(2\theta) \partial_{x_1 x_2} c(-x_2,-x_1),\\
    =& -\cos(-\theta-\pi/2) \partial_{x_2} c(-x_2,-x_1), \\
    &+ \sin(-\theta-\pi/2) \partial_{x_1}c(-x_2,-x_1), \\
    &- \frac{\tau}{2}\sin(-2\theta - \pi) (\partial_{x_2 x_2}c - \partial_{x_1 x_1}c)(-x_2,-x_1), \\
    &- \tau \cos(-2\theta - \pi) \partial_{x_1 x_2} c(-x_2,-x_1)\\
    =&- (\swapOp \B_\tau [f]) (x_1,x_2,\theta).
\end{align*}
Finally, using the distributive property and the above, we obtain that,
\begin{equation*}
    \swapOp\partial_\theta\left(\B_\tau [f]\left(\tfrac{1}{2\pi}+ f\right)\right) = \partial_\theta\left(\B_\tau [\swapOp f]\left(\tfrac{1}{2\pi}+\swapOp f\right)\right),
\end{equation*}
and this concludes the proof.
\end{proof}

%
%

\subsection{Numerical simulation of sub and super-critical bifurcations}
\label{sec:AppendixNumerics}
In Figure~\ref{fig:numericalbif} we show a graph for a numerical result accompanying the super- and subcriticality of the bifurcation as in the main result. In the graph we show two lines, with the blue line corresponding to the existence of numerical non-trivial stationary solutions and the red line corresponding to the value of the largest real eigenvalue of the linearized operator around the constant solution. The numerical solutions are approximated using a finite volume discretization as in \cite{bruna2025convergence}. After the forward-in-time finite volume scheme has reached a small enough value of the residual, we stop the simulation and start a new one with a smaller value of $\chi$ with initial data $f^{0,\chi_{n+1}}=f^{\chi_n}(t_{\mathrm{final}})$ from the previous simulation. For the spectrum, we approximate the Fr\'echet derivative of the discrete forward operator in each direction of the finite-dimensional finite volume domain. These two numerical results illustrate the behavior predicted by the bifurcation diagram in Figure~\ref{fig:subsupercriticiality}. The simulation on the left corresponds to the supercritical case, and the simulation on the right corresponds to the subcritical case.

\begin{figure}[!ht]
    \centering
    \includegraphics[width=1.\linewidth]{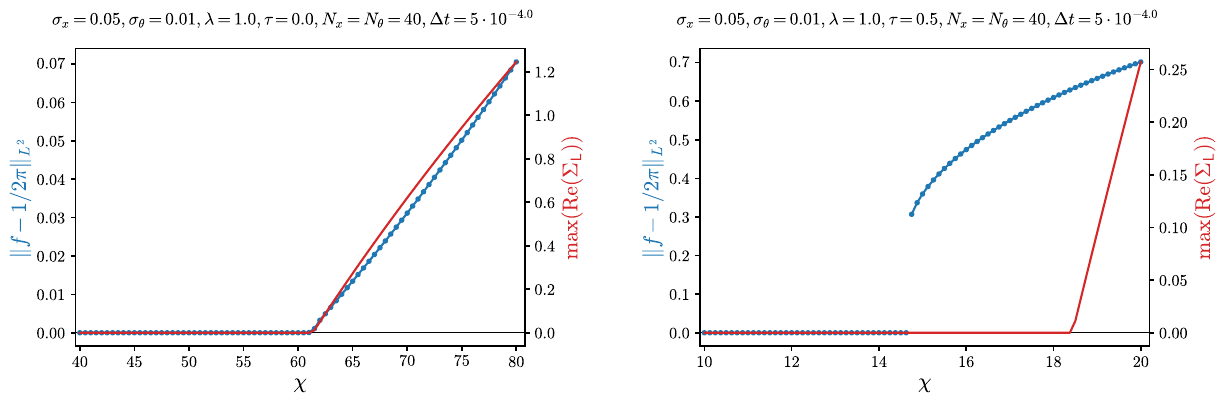}
    \caption{Numerical bifurcation diagram for $\tau=0$ on the left and $\tau = 0.5$ on the right. We used a forward-in-time version of the finite volume scheme as in \cite{bruna2025convergence}. The residuals $\|\F(f,\chi)\|_{L^\infty}$ are of the order $10^{-6}$ or less.}
    \label{fig:numericalbif}
\end{figure}

\endgroup
    
    \newpage
    \section*{References}
    \addcontentsline{toc}{section}{References}
    \printbibliography[heading=none]

@article{bruna2025convergence,
  title={Convergence of a finite volume scheme for a model for ants},
  author={Bruna, Maria and Schmidtchen, Markus and de Wit, Oscar},
number={6},
volume={59},
  journal={ESAIM: M2AN},
  year={2025},
doi={10.1051/m2an/2025092}
}

@misc{rakodeWit2025,
      title={Existence and dimensional lower bound for the global attractor of a PDE model for ant trail formation}, 
      author={Matthias Rakotomalala and Oscar de Wit},
      year={2025},
      eprint={2504.01765},
      archivePrefix={arXiv},
      primaryClass={math.AP},
      url={https://arxiv.org/abs/2504.01765}, 
}

@book{kielhofer2006bifurcation,
  title={Bifurcation theory: An introduction with applications to PDEs},
  author={Kielh{\"o}fer, Hansj{\"o}rg},
  volume={156},
  year={2006},
  publisher={Springer Science \& Business Media}
}

@book{henry2006geometric,
  title={Geometric theory of semilinear parabolic equations},
  author={Henry, Daniel},
  volume={840},
  year={2006},
  publisher={Springer}
}

@article{bruna2024lane,
  title={Lane formation and aggregation spots in a model of ants},
  author={Bruna, Maria and Burger, Martin and de Wit, Oscar},
  journal={SIAM Journal on Applied Dynamical Systems},
  volume={24},
  number={1},
  pages={675--709},
  year={2025},
  publisher={SIAM}
}

@article{bertucci2024curvature,
author = {Bertucci, Charles and Rakotomalala, Matthias and Toma\v{s}evi\'{c}, Milica},
title = {Curvature in chemotaxis: A model for ant trail pattern formation},
journal = {Mathematical Models and Methods in Applied Sciences},
volume = {0},
number = {0},
pages = {1-46},
year = {0},
doi = {10.1142/S0218202525500496},
URL = {https://doi.org/10.1142/S0218202525500496},
eprint = {https://doi.org/10.1142/S0218202525500496}
}

@article{liebchen2019interactions,
  title={Which interactions dominate in active colloids?},
  author={Liebchen, Benno and L{\"o}wen, Hartmut},
  journal={The Journal of chemical physics},
  volume={150},
  number={6},
  year={2019},
  publisher={AIP Publishing}
}

@article{merino2025stability,
  title={Stability of equilibria of the spatially inhomogeneous Vicsek-BGK equation across a bifurcation},
  author={Merino-Aceituno, Sara and Schmeiser, Christian and Winter, Raphael},
  journal={SIAM Journal on Mathematical Analysis},
  volume={57},
  number={6},
  pages={6017--6038},
  year={2025},
  publisher={SIAM}
}

@article{luccon2020emergence,
  title={Emergence of oscillatory behaviors for excitable systems with noise and mean-field interaction: a slow-fast dynamics approach},
  author={Lu{\c{c}}on, Eric and Poquet, Christophe},
  journal={Communications in Mathematical Physics},
  volume={373},
  number={3},
  pages={907--969},
  year={2020},
  publisher={Springer}
}

@article{dietert2024nonlinear,
  title={Nonlinear stability for active suspensions},
  author={Dietert, Helge and Zelati, Michele Coti and G{\'e}rard-Varet, David},
  journal={Annales de l'Institut Henri Poincaré C},
  year={2025}
}

@article{albritton2023stabilizing,
  title={On the stabilizing effect of swimming in an active suspension},
  author={Albritton, Dallas and Ohm, Laurel},
  journal={SIAM Journal on Mathematical Analysis},
  volume={55},
  number={6},
  pages={6093--6132},
  year={2023},
  publisher={SIAM}
}

@article{bogachev2019convergence,
  title={Convergence in variation of solutions of nonlinear Fokker--Planck--Kolmogorov equations to stationary measures},
  author={Bogachev, Vladimir I and R{\"o}ckner, Michael and Shaposhnikov, Stanislav V},
  journal={Journal of Functional Analysis},
  volume={276},
  number={12},
  pages={3681--3713},
  year={2019},
  publisher={Elsevier}
}

@book{haragus2010local,
  title={Local bifurcations, center manifolds, and normal forms in infinite-dimensional dynamical systems},
  author={Haragus, Mariana and Iooss, G{\'e}rard},
  year={2010},
  publisher={Springer Science \& Business Media}
}

@article{giacomin2012transitions,
  title={Transitions in active rotator systems: invariant hyperbolic manifold approach},
  author={Giacomin, Giambattista and Pakdaman, Khashayar and Pellegrin, Xavier and Poquet, Christophe},
  journal={SIAM Journal on Mathematical Analysis},
  volume={44},
  number={6},
  pages={4165--4194},
  year={2012},
  publisher={SIAM}
}

@article{cormier2021hopf,
  title={Hopf bifurcation in a mean-field model of spiking neurons},
  author={Cormier, Quentin and Tanr{\'e}, Etienne and Veltz, Romain},
  journal={Electronic Journal of Probability},
  volume={26},
  pages={1--40},
  year={2021},
  publisher={The Institute of Mathematical Statistics and the Bernoulli Society}
}

@article{alasio2024regularity,
  title={Regularity and trend to equilibrium for a non-local advection-diffusion model of active particles},
  author={Alasio, Luca and Guerand, Jessica and Schulz, Simon},
  journal={Kinetic and Related Models},
  volume={18},
  number={3},
  pages={426--462},
  year={2025},
  publisher={Kinetic and Related Models}
}

@article{kielhofer1983principle,
  title={On the principle of reduced stability},
  author={Kielh{\"o}fer, Hansj{\"o}rg and Lauterbach, Reiner},
  journal={Journal of functional analysis},
  volume={53},
  number={2},
  pages={99--111},
  year={1983},
  publisher={Elsevier}
}

\end{document}